\documentclass[11pt]{amsart}
\usepackage{mathrsfs} 
\usepackage[margin=1.25in]{geometry}
\usepackage{ifthen}
\usepackage{graphicx}
\usepackage{epsfig}
\usepackage{dsfont, amssymb}
\usepackage{amsfonts,dsfont,amssymb,amsthm,stmaryrd,bbm}
\usepackage{amsmath}
\usepackage{verbatim}


\usepackage{xcolor}

\usepackage[toc,page]{appendix} 
\usepackage{hyperref,mathtools}
\hypersetup{colorlinks=true,pdftitle="",pdftex}

\setlength{\topmargin}{-0.45 in}     
\setlength{\oddsidemargin}{0.3in}  
\setlength{\evensidemargin}{0.3in} 
\setlength{\textheight}{9in}
\setlength{\textwidth}{6.1in} 
\setlength{\footskip}{0.55in}  

\let\originallesssim\lesssim
\let\originalgtrsim\gtrsim

\DeclareRobustCommand{\lesssim}{%
  \mathrel{\mathpalette\lowersim\originallesssim}%
}
\DeclareRobustCommand{\gtrsim}{%
  \mathrel{\mathpalette\lowersim\originalgtrsim}%
}

\makeatletter
\newcommand{\lowersim}[2]{%
  \sbox\z@{$#1<$}%
  \raisebox{-\dimexpr\height-\ht\z@}{$\m@th#1#2$}%
}
\makeatother

\newtheorem{conj}{Conjecture}[section]
\newtheorem{thm}{Theorem}[section]

\newtheorem{remark}[thm]{Remark}
\newtheorem{lem}[thm]{Lemma}
\newtheorem{prop}[thm]{Proposition}

\newtheorem{ques}[conj]{Question}

\newtheorem{defn}[thm]{Definition}
\newtheorem{cor}[thm]{Corollary}

\newcommand\independent{\protect\mathpalette{\protect\independent}{\perp}} 
\def\independent#1#2{\mathrel{\rlap{$#1#2$}\mkern2mu{#1#2}}}

\newcommand{\mZ}{\mathbb{Z}}










\def\Var{{\rm Var}}

\def\phi{\varphi}

\def\bee{\begin{eqnarray*}}
\def\ene{\end{eqnarray*}}
\usepackage{mathtools}

\DeclarePairedDelimiter\floor{\lfloor}{\rfloor}

\hyphenation{op-tical net-works semi-conduc-tor}

\begin{document}

\title{Bernoulli Sums and R\'enyi Entropy Inequalities}

\author{Mokshay Madiman, James Melbourne,  and Cyril Roberto}

\address{University of Delaware, Department of Mathematical Sciences, 501 Ewing Hall, Newark DE 19716, USA.}
\email{madiman@udel.edu}

\address{Department of Probability and Statistics, Centro de Investigación en
matemáticas (CIMAT)}
\email{james.melbourne@cimat.mx}

\address{MODAL'X, UPL, Univ. Paris Nanterre, CNRS, F92000 Nanterre France}
\email{croberto@math.cnrs.fr}

\thanks{The last author was supported by the Labex MME-DII funded by ANR, reference ANR-11-LBX-0023-01 and ANR-15-CE40-0020-03 - LSD - Large Stochastic Dynamics, and the grant of the Simone and Cino Del Duca Foundation, France.}

\begin{abstract}
We investigate the R\'enyi entropy of  sums of {independent} integer-valued random variables through {Fourier} theoretic means, and give sharp comparisons between the variance and the R\'enyi entropy for sums of independent Bernoulli random variables.   As applications, we prove that a discrete ``min-entropy power'' is {superadditive with respect to convolution modulo} 
a universal constant, and give new bounds on an entropic generalization of the Littlewood-Offord problem that are sharp in the ``Poisson regime''.
\end{abstract}

\keywords{entropy, variance, log-concave random variables}

\subjclass{94A17 (Primary), 60E15 (Secondary)}

\maketitle

\section{Introduction}

For a countable set $A$, $|A|$ will denote its cardinality.  The notation $\mathbb{P}$ will be reserved for a probability measure and the probability of an event $A$ will be denoted $\mathbb{P}(A)$.  For a discrete random variable $X$, { with values in a countable set $\mathcal{X}$,}
we will denote its density function with respect to the counting measure as $f_X$ so that $f_X(x) = \mathbb{P}(X=x)$, $x \in \mathcal{X}$ (when $\mathcal{X}=\mathbb{Z}$, we may use the notation $p_n:=f_X(n)$ for simplicity).
We will denote for $f$ a function on a countable set $\mathcal{X}$, and $\alpha \in (0,\infty)$
\begin{align*}
    \|f\|_\alpha \coloneqq \left( \sum_{x \in \mathcal{X}} |f|^\alpha(i) \right)^{\frac 1 \alpha}.
\end{align*}
By continuous limits we define $\|f\|_\infty \coloneqq \sup_{x \in \mathcal{X}} |f(x)|$, and $\|f\|_0 = |\{x \in \mathcal{X}: f(x) \neq 0 \}|$.
We will be primarily interested in the case that $\mathcal{X} = \mathbb{Z}$, the integers.  The subset of the integers $\{a, a+1, \dots, b-1, b\}$ will be denoted by $\llbracket a, b \rrbracket$.  When $a=0$ we will abbreviate $\llbracket 0 , b \rrbracket$ by $\llbracket b \rrbracket$.

\begin{defn}[R\'enyi Entropy \cite{Ren61}] \label{defn: Renyi entropy}
For $X$ a random variable taking values $x \in \mathcal{X}$, such that $f_X(x) = \mathbb{P}(X= x)$, define for $\alpha \in (0,1) \cup (1,\infty)$, the $\alpha$-R\'enyi entropy of $X$,
\begin{align*}
    H_\alpha(X) \coloneqq (1-\alpha)^{-1} \log \sum_{x \in \mathcal{X}} f_X^\alpha(x).
\end{align*}
For $\alpha \in \{0,1,\infty \}$ the R\'enyi entropy is defined through continuous limits; 
\begin{align*}
    &H_0(X) \coloneqq \log |\{x \in \mathcal{X}: f_X(x) > 0 \}|
        \\
     &H_1(X) \coloneqq - \sum_{x \in \mathcal{X}} f_X(x) \log f_X(x)
        \\
    &H_\infty(X) \coloneqq - \log \|f_X\|_\infty.
\end{align*}

\end{defn} 
Note that $H_1(X)$ agrees with the usual Shannon entropy. As such we will employ the conventional notation $H(X) \coloneqq H_1(X)$.  
  Note that for $\alpha \in (0,1)\cup (1,\infty)$ and $\alpha' = \alpha/(\alpha - 1)$ we have the expression $H_\alpha(X) = - \alpha' \log \|f\|_\alpha$.  We will also use the notation $H_\alpha(f_X)$ in place of $H_\alpha(X)$ when it is more convenient to express the entropy as a function of the densities rather than variables.  We take $\log$ as the natural logarithm.

The R\'enyi entropy has a well known analog in the continuous setting, when $X$ is a random variable {taking values in} $\mathbb{R}^d$ { and its distribution has} a density function with respect to the usual $d$-dimensional Lebesgue measure. {The  (differential or continuous)} R\'enyi entropy is defined as
\begin{align*}
    h_\alpha(X) \coloneqq (1-\alpha)^{-1} \log \int_{\mathbb{R}^d} f_X^\alpha(x) dx
\end{align*}
for $\alpha \in (0,1) \cup (1,\infty)$. { It is extended through continuous limits\footnote{Explicitly, 
    $h_0(X) \coloneqq \log |\{x \in \mathbb{R}^d: f_X(x) > 0 \}|_d
    $, where $| \cdot |_d$ denotes the Lebesgue volume, $h_1(X) = h(X) \coloneqq - \int_{\mathbb{R}^d} f_X(x) \log f_X(x) dx$ corresponding to the differential Shannon entropy. and $h_\infty(X) \coloneqq - \log \|f_X\|_\infty$.  Where $\|f_X\|_\infty$ denotes the essential suprema of $f_X$ with respect to the Lebesgue measure in this case.} to $\alpha \in \{0,1,\infty \}$.

{Superadditivity} properties of the R\'enyi entropy connect anti-concentration results in Probability \cite{RV15, BC15:1, MMX17:2}, the seminal entropy power inequality due to Shannon {and Stam} \cite{Sha48, Sta59}, and the Brunn-Minkowski inequality of convex geometry, see \cite{Gar02} for further background.  Such connections can be traced back to \cite{CC84} where the analogy between the Brunn-Minkowski inequality and the Entropy Power Inequality was first described.  In \cite{DCT91}, proofs of the Brunn-Minkowski inequality \cite{BL76a} and entropy {power inequality} \cite{Lie78} {based on the sharp Young inequality} (see \cite{Bec75}) were synthesized to prove R\'enyi entropy inequalities connecting the two results; {an alternate unification using rearrangements was given in} \cite{WM14}.  We direct the reader to \cite{MMX17:1} for further background. There has been significant recent interest and progress in understanding the behavior of the differential R\'enyi {entropies} on independent summation.   

In analogy with the Shannon entropy power { $N(X) \coloneqq N_1(X) \coloneqq e^{2 h_1(X)/d}$}, define for $\alpha \in [0,\infty]$, $N_\alpha(X) = e^{2 h_\alpha(X)/d}$.  

\begin{thm}[\cite{BC14, BC15:1, RS16, MM19}]
For $\alpha \in [1,\infty]$, there exists $c(\alpha) \geq 1/e$ such that, for independent $\mathbb{R}^d$-valued random variables $X_i$, 
    \begin{align} \label{eq: continuous renyi epi}
        N_\alpha(X_1 + \cdots + X_n) \geq c(\alpha) \sum_{i=1}^n N_\alpha(X_i).
    \end{align}
Further, for $\alpha \in [0,1)$, there exists $c(\alpha) >0$, such that if $X_i$ are further assumed to be log-concave\footnote{We recall that an $\mathbb{R}^d$-valued random variable is log-concave when it has a density $f$ such that $t \in [0,1]$ and $x,y \in \mathbb{R}^d$ implies $f((1-t)x + ty) \geq f^{1-t}(x) f^t(y)$.}, then \eqref{eq: continuous renyi epi} holds.
\end{thm}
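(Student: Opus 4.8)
The plan is to isolate the ``hard'' range $\alpha\in(1,\infty]$, where the sharp Young convolution inequality is the engine, and to reduce the remaining cases to classical results: $\alpha = 1$ is the Shannon--Stam entropy power inequality \cite{Sha48} (with $c(1)=1$), and $\alpha = 0$ is the Brunn--Minkowski inequality \cite{BL76a}, since $N_0(X) = |\supp f_X|^{2/d}$, $\supp f_{X_1+\cdots+X_n} = \supp f_{X_1}+\cdots+\supp f_{X_n}$ (a Minkowski sum of convex sets, by log-concavity), and $|A_1+\cdots+A_n|^{1/d}\ge\sum_i|A_i|^{1/d}$ gives $N_0(\sum_i X_i)\ge\big(\sum_i N_0(X_i)^{1/2}\big)^2\ge\sum_i N_0(X_i)$, i.e. $c(0)=1$. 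Throughout I use $f_{X_1+\cdots+X_n} = f_{X_1}*\cdots*f_{X_n}$ and, for $\alpha\in(1,\infty)$, $N_\alpha(X) = \|f_X\|_\alpha^{-2\alpha'/d}$ with $\alpha' = \alpha/(\alpha-1)>0$ (and $N_\infty(X)=\|f_X\|_\infty^{-2/d}$). One point to keep in mind: iterating a two-variable inequality with constant $c<1$ degrades the constant geometrically in $n$, so the argument for $\alpha>1$ must treat general $n$ at once, and the mechanism that makes a constant uniform in $n$ possible is precisely the \emph{sharpness} of Young's inequality.

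For $\alpha\in(1,\infty]$, I would apply the sharp Young convolution inequality for $n$ functions (Beckner \cite{Bec75}, Brascamp--Lieb \cite{BL76a})
\[
\|f_1*\cdots*f_n\|_\alpha \;\le\; \Big(\tfrac{\prod_{i} A_{p_i}}{A_\alpha}\Big)^{d}\prod_{i}\|f_i\|_{p_i},\qquad A_p^2 = \frac{p^{1/p}}{(p')^{1/p'}},
\]
valid whenever $p_1,\dots,p_n\in[1,\alpha]$ satisfy $\sum_i 1/p_i = (n-1) + 1/\alpha$. Parametrising $1/p_i = 1 - \lambda_i/\alpha'$ turns this constraint into $\sum_i\lambda_i = 1$, $\lambda_i\in[0,1]$, and makes $1/p_i$ the convex combination $(1-\lambda_i)\cdot 1 + \lambda_i\cdot\tfrac1\alpha$, so log-convexity of $L^p$ norms gives $\|f_i\|_{p_i}\le\|f_i\|_1^{1-\lambda_i}\|f_i\|_\alpha^{\lambda_i} = \|f_i\|_\alpha^{\lambda_i}$. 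Raising to the power $-2\alpha'/d$ (which reverses the inequality, as $\alpha'>0$) yields
\[
N_\alpha(X_1+\cdots+X_n)\;\ge\;\Big(\tfrac{A_\alpha}{\prod_i A_{p_i}}\Big)^{2\alpha'}\prod_i N_\alpha(X_i)^{\lambda_i},
\]
with the probability vector $(\lambda_i)$ still at our disposal. I would then set $\lambda_i = N_\alpha(X_i)/\sum_j N_\alpha(X_j)$, so the weighted geometric mean on the right equals $\big(\sum_j N_\alpha(X_j)\big)\exp\!\big(\sum_i\lambda_i\log\lambda_i\big)$, and the whole estimate reduces to checking, for every probability vector $\lambda$ of every length,
\[
\sum_i\Big(-2\alpha'\log A_{p(\lambda_i)} + \lambda_i\log\lambda_i\Big)\;\ge\;\log c(\alpha) - 2\alpha'\log A_\alpha .
\]
The left side splits as a sum of a single-variable function of each $\lambda_i$, so this follows from the one-variable estimate $-2\alpha'\log A_{p(t)} + t\log t \ge t\log(\alpha'/e)$ on $(0,1]$ --- a routine calculus/convexity check whose content is that the $t\log t$ loss is cancelled exactly by the behaviour of the sharp Young constant near $p=1$. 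Summing over $i$ (using $\sum_i\lambda_i=1$) gives the theorem with $c(\alpha) = \tfrac{\alpha'}{e}A_\alpha^{2\alpha'} = \tfrac1e\,\alpha^{1/(\alpha-1)}$, which exceeds $1/e$ because $\log\big(\alpha^{1/(\alpha-1)}\big) = \tfrac{\log\alpha}{\alpha-1}\ge 0$, and depends on neither $n$ nor $d$. The case $\alpha=\infty$ is the same computation with target exponent $\infty$ (then $A_\alpha = A_\infty = 1$ and $\sum_i 1/p_i = n-1$), giving $c(\infty) = 1/e$.

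For $\alpha\in(0,1)$ with $X_i$ log-concave, I would use that a convolution of log-concave densities is log-concave (Pr\'ekopa--Leindler) together with the standard distribution-free, dimension-linear comparison of R\'enyi entropies valid for log-concave laws: there is $C(\alpha)$, depending only on $\alpha$, with $h_\alpha(Y)\le h_1(Y) + C(\alpha)\,d$ for every log-concave $Y$ in $\mathbb{R}^d$. Since also $h_\alpha(Y)\ge h_1(Y)$ for $\alpha<1$ (R\'enyi entropy is non-increasing in the order), this gives
\[
N_\alpha(X_1+\cdots+X_n)\;\ge\;N_1(X_1+\cdots+X_n)\;\ge\;\sum_i N_1(X_i)\;\ge\;e^{-2C(\alpha)}\sum_i N_\alpha(X_i),
\]
the middle inequality being the Shannon--Stam EPI applied to the $X_i$ (whose entropies are finite by log-concavity); hence $c(\alpha) = e^{-2C(\alpha)}>0$.

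The main obstacle, and essentially the only genuine computation, is in the range $\alpha\in(1,\infty]$: one must verify that the product of sharp Young constants $\prod_i A_{p_i}$ offsets, term by term, the factor $e^{\sum_i\lambda_i\log\lambda_i}$ paid in passing from a weighted geometric mean of the $N_\alpha(X_i)$ to their arithmetic sum --- a factor that can be as small as $1/n$ --- and that what is left over is at least $1/e$ uniformly in $n$ and $d$; a non-sharp Young inequality is useless here. In the log-concave range the analogous technical input is the comparison $h_\alpha\le h_1 + C(\alpha)d$, which is where convexity enters.
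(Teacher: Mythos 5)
The paper does not prove this theorem; it is stated with citations to \cite{BC14, BC15:1, RS16, marsiglietti2018entropy} as background, so there is no in-paper proof to compare against. Your proposal correctly reconstructs the arguments from those references: sharp Young with the $\lambda_i \propto N_\alpha(X_i)$ choice of interpolation weights is exactly the mechanism of Bobkov--Chistyakov and Ram--Sason for $\alpha>1$, and the dimension-linear comparison $h_\alpha \le h_1 + C(\alpha)d$ for log-concave laws is the key input of Marsiglietti--Melbourne for $\alpha\in(0,1)$.

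The one step you left as a ``routine calculus/convexity check'' does close cleanly, and it is worth recording since it is the crux. With $1/p(t) = 1 - t/\alpha'$ one has $p(t)' = \alpha'/t$, and expanding $2\log A_{p(t)} = \frac{\log p}{p} - \frac{\log p'}{p'}$ gives
\[
-2\alpha'\log A_{p(t)} + t\log t - t\log(\alpha'/e) \;=\; (\alpha'-t)\log(\alpha'-t) - \alpha'\log\alpha' + t\log\alpha' + t \;=:\; \phi(t),
\]
and $\phi(0)=0$ while $\phi'(t) = -\log(1-t/\alpha') > 0$ on $(0,\alpha')\supset(0,1]$, so $\phi\ge 0$. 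Summing over $i$ with $\sum\lambda_i=1$ then gives $c(\alpha) = (\alpha'/e)\,A_\alpha^{2\alpha'} = \alpha^{1/(\alpha-1)}/e \ge 1/e$ as you claimed, with $c(\alpha)\to 1$ as $\alpha\to 1^+$ and $c(\infty)=1/e$. The remaining cases ($\alpha=1$ via Shannon--Stam, $\alpha=0$ via Brunn--Minkowski on supports, $\alpha\in(0,1)$ via $N_\alpha\ge N_1$, the EPI, and the log-concave comparison) are handled correctly. One small caveat: the log-concave comparison fails at $\alpha=0$ (full-support log-concave laws have $h_0=\infty$), but you already treat $\alpha=0$ separately, so the argument as structured is sound.
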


Note that $c(\alpha)$ can be given independent of $n$.  When $\alpha = 1$ the Entropy Power Inequality is the fact that one may take $c(\alpha) = 1$, while the Brunn-Minkowski inequality\footnote{{The Brunn-Minkowski inequality states that $|A + B|^{\frac 1 d} \geq |A|^{\frac 1 d} + |B|^{\frac 1 d}$ for non-empty Borel measurable $A, B \subseteq \mathbb{R}^d$, with addition of sets given by Minkowski addition.  For independent Borel random variables $X$ and $Y$, the support of $X + Y$, $supp(X+Y)$, is exactly the Minkowksi sum of the supports, $supp(X) + supp(Y)$.  Thus, applying this observation and Brunn-Minkowski, $N_0^{\frac 1 2}(X+Y) = |supp(X + Y)|^{\frac 1 d} \geq |supp(X)|^{\frac 1 d} + |supp(Y)|^{\frac 1 d} = N_0^{\frac 1 2}(X) + N_0^{\frac 1 2}(Y)$.  One obtains \eqref{eq: Renyi BMI} by induction.} Observe that \eqref{eq: Renyi BMI} is stronger than the inequality $N_0(X_1 + \cdots + X_n) \geq \sum_{i=1}^n N_0(X_i)$.} can be written as
\begin{align} \label{eq: Renyi BMI}
    N_0^{\frac 1 2} (X_1 + \cdots + X_n) \geq \sum_{i=1}^n N_0^{\frac 1 2} (X_i).
\end{align}
For all other $\alpha$, necessarily $c(\alpha) < 1$.  In fact without concavity assumptions on the $X_i$, \eqref{eq: continuous renyi epi} fails (see \cite{LMM20}) for any $\alpha \in (0,1)$.  Variants of the R\'enyi entropy power inequalities were studied in \cite{BM17, Li18:1} and connections with optimal transport theory can be found in \cite{Rio18,CFP18}.  

In the discrete setting, it is natural to wonder if a parallel interplay exists, especially in light of the fruitful analogy between additive combinatorics and convex geometry, already well known, see \cite{TV06:book}.  There has been considerable interest in developing discrete versions of the entropy power inequality, see \cite{HV03, HAT14,JY10,MWW19, WM15:isit}.  General superadditivity properties of the R\'enyi entropy on independent summation have proved elusive in the discrete setting, and the mentioned results only succeed in the $\alpha = 1$ case, for special classes of variables. Even in the $\alpha = 2$ case, sometimes referred to as the collision entropy in the literature, which has taken a central role in information theoretic learning applications, see \cite{Pri10:book}, little seems to be known.

\begin{defn}
For $X$ a discrete random variable on $\mathbb{Z}$, and $\alpha \in [0,\infty]$ set
\begin{align*}
    \Delta_\alpha(X) \coloneqq e^{2 H_\alpha(X)} - 1.
\end{align*}
\end{defn}
{ Note that just as the Brunn-Minkowski inequality was written in \eqref{eq: Renyi BMI} as the superadditivity of the functional $N_0^{1/2}$ (which is stronger than that of $N_0$), the Cauchy-Davenport theorem (on $\mathbb{Z}$) can be written as the superadditivity\footnote{In fact, $N_0^{1/2}$ and $\Delta_{CD}$ turn out to satisfy the stronger property of fractional superadditivity on $\mathbb{R}$ and $\mZ$ respectively, as recently shown by \cite{BM21}.} of the functional  
$\Delta_{CD}(X)=(\Delta_0(X) + 1)^{\frac 1 2} -1$ (which is stronger than that of $\Delta_0$). }

{
\begin{defn}[Bernoulli Sum]
The class $\mathcal{B}_n$ of Bernoulli $n$-sums is defined to be the set of distributions of all random variables $Y_n = X_1 + \cdots + X_n$, where $X_i$ are independent Bernoulli random variables (i.e., $\mathbb{P}(X_i = 1) = p_i = 1 - \mathbb{P}(X_i = 0)$
for $p_i\in [0,1]$).

The class $\mathcal{B}$ of finite Bernoulli sums is given by
$$
\mathcal{B}=\bigcup_{n\in\mathbb{N}} \mathcal{B}_n ,
$$
and the class $\bar{\mathcal{B}}$ of Bernoulli sums is the closure of $\mathcal{B}$ in the weak-* topology (convergence in distribution). 
\end{defn}

As is common, we identify random variables with their distributions for ease of discussion: thus $Y$ is a  Bernoulli sum (we abuse notation to write $Y\in \mathcal{B}$ instead of saying that the law of $Y$ lies in $\mathcal{B}$) if there exists a sequence of finite Bernoulli sums $Y_n$ converging in distribution to $Y$. 
 We will show that if $S$ is a Bernoulli sum}, and $\alpha \in [2,\infty]$, then
\begin{align*}
    2\alpha' \Var(S) \leq \Delta_\alpha(S).
\end{align*}
This inequality is sharp for Bernoulli random variables with parameter $p$ tending to $0$, and for Poisson random variables with parameter $\lambda$ tending to $0$ as well.  In fact for Bernoulli sums, the functional $\Delta_\alpha(X)$ is, up to absolute constants, equal to the  variance when $\alpha \geq 2$ .  This should be compared to the continuous setting where it is known that the entropy power is proportional to variance for Gaussian {random variables}.
{ The connection between variance and entropy power has since been extended to more general log-concave random variables and R\'enyi entropies, and this connection has proved quite useful (see, e.g., \cite{BM11:cras, BM11:it, BM12:jfa} for a connection to  Bourgain's hyperplane conjecture \cite{Bou86}, or \cite{MNT21} for connections to capacities of communication channels)}.

Further we will prove a ``min-entropy power inequality'': for $\alpha = \infty$, we will prove that, without qualification, independent { random variables} $X_i$ satisfy the following R\'enyi entropy inequality:
\begin{align*}
    \Delta_\infty(X_1 + \cdots + X_n) \geq c \sum_{i=1}^n \Delta_\infty(X_i),
\end{align*}
for a universal $c >0$ independent of $n$, the number of summands. As will be shown, one can take $c = \frac 1 {20}$ and we will observe below that necessarily $c \leq \frac 1 2$.

As an application, we {develop new bounds for} a generalized version of the Littlewood-Offord problem.  {Furthermore}, we give sharp bounds for a R\'enyi entropic generalization of the Littlewood-Offord problem {with} $\alpha \geq 2$.  

The mathematical underpinning of the min-EPI is an identification of extreme points in the space of probability measures with a fixed upper bound on their density functions that was proven in \cite{MMX17:2}, and a rearrangement inequality from \cite{MWW19}.
A main technical contribution is an $L^p$-norm bound on the characteristic function of a Bernoulli random variable.
Recall that $\hat{f}_{X}(t)=\mathbbm{E}(e^{itX})$, $t \in \mathbb{R}$, denotes the {Fourier} transform of the (discrete) random variable $X$. 
For $q \geq 1$ we set  
$\| \hat{f}_{X}\|_q^q := \frac{1}{2 \pi} \int_{-\pi}^\pi |\mathbb{E}e^{itX}|^q dt$.  We prove that when $X$ is a Bernoulli, with variance $\sigma^2$,
$\| \hat{f}_X \|_q^q \leq ( 6 \sigma^2 q )^{-1/2} \int_0^{\sqrt{6 \sigma^2 q}} e^{-t^2/2} dt$,
{ the constant $6$ being optimal.}

Let us outline the contents of the paper.  In Section \ref{sec: Bernoulli Sums} we derive $L^p$ bounds for the characteristic function of a Bernoulli random variable in terms of its variance using a distributional argument
.  Then, we give a general theorem for the extension of such inequalities to independent sums.  In Section \ref{sec: Renyi entropy inequalities} we demonstrate how the characteristic function bounds of Section \ref{sec: Bernoulli Sums} can be used to deliver sharp comparisons between the R\'enyi entropy and the variance for variables with distributions in the closure of the Bernoulli sums.   It is also demonstrated that such bounds cannot be achieved for the case that $\alpha = 1$ by a counter example. In Section \ref{sec: min-EPI} we develop the functional analytic tools to reduce the problem of a min-EPI for general random variables to a min-EPI of variables consisting only of Bernoulli and uniform distributions.  In addition, 
reversals and sharpenings of the min-EPI in the case that the $X_i$ are Bernoulli sums.  In Section \ref{sec: littlewood-offord problem}, the Littlewood-Offord problem is introduced, and its reduction to Bernoulli sums is given.  The bounds for the min-Entropy of a Bernoulli sum in terms of its variance are applied, and then it is shown that these results can be extended to deliver R\'enyi bounds on an entropic Littlewood-Offord problem.  Some proofs are suppressed to the appendix.


\section{Bernoulli Sums} \label{sec: Bernoulli Sums}
 For fixed $m$, $\mathcal{B}_m$ sums
corresponds to the normalized Polya frequency sequences of length $m$, whose probabilistic behavior was studied by Pitman \cite{pitman1997probabilistic}. The class $\bigcup_{n=1}^m \mathcal{B}_n$ is a subset of the ultra log-concave distributions of order $m$, studied by Pemantle \cite{pemantle2000towards} (see also \cite{liggett1997ultra}) and written ULC$(m)$.  The ULC$(m)$ variables can be understood as all variables with distribution log-concave\footnote{A sequence $x_n$ is log-concave, when it has contiguous support and $x_n^2 \geq x_{n+1} x_{n-1}$.  When $x_n$ is the distribution of a random variable $X$, that is $x_n = \mathbb{P}(X = n)$, we say that the distribution of $X$ is log-concave with respect to the distribution of a variable $Y$, if $y_n \coloneqq \mathbb{P}(Y=n)$ admits a log-concave sequence $a_n$ such that $x_n = a_n y_n$.  When $Y$ is binomial$(p,m)$ we note that the statement is independent of $p \in (0,1)$.}  with respect to a binomial distribution.  See  \cite{branden2020lorentzian,anari2018log,chan2021log} for important connections with combinatorics. 

 Bernoulli sums of unbounded length have distributions log-concave with respect to a Poisson random variable.  This class often simply called  ``ultra log-concave'', and arise naturally in the study of intrinsic volumes (see \cite{marsiglietti2022Concentration, schneider2014convex}) and in connection with entropic limit theorems. For example, Harrem\"oes \cite{Har01} showed that the Poisson distribution has maximum entropy among all distributions in $\bar{\mathcal{B}}$ with fixed mean; see also \cite{Joh06, Yu09:2, HJK10, JKM13, MM22} for other related work.  See \cite{HJ17, HJ19, TT19, marsiglietti2020geometric,melbourne2021discrete, saumard2014log} for recent results and further background on Bernoulli sums as well as log-concave probability sequences.


We will first derive $L^p$ bounds on the characteristic functions of Bernoulli random variables via a comparison with Gaussians.  This argument will be distributional.  We use $V$, the capitalization of a non-negative function $v$ to denote its distribution function.  Explicitly,
\begin{defn}
    For measurable $v: \mathbb{R} \to [0,\infty)$, its distribution function $V:[0,\infty]$, is defined by
    \[
        V(t) = | \{ x: v(x) > t \} |.
    \]
\end{defn}
Integrals of functions can be easily computed from integrals of their distribution functions using Fubini-Tonelli, and the following formula
\begin{equation} \label{eq: Distribution function integral formula}
    \int v = \int_0^\infty | \{ x : v(x) > \lambda \} | d\lambda = \int_0^\infty V(\lambda) d \lambda.
\end{equation}
{ If we denote $[x]_+ \coloneqq \max \{ x , 0 \}$ then Fubini-Tonelli admits a slight generalization of \eqref{eq: Distribution function integral formula}, for $t \geq 0$
\begin{equation} \label{eq: generalized distribution function integral formula}
    \int [ v(x) - t]_+ dx  = \int_0^\infty | \{ x : v(x) > t + \lambda \} | d \lambda = \int_t^\infty V(\lambda) d\lambda.
\end{equation}

\begin{lem} \label{lem: Nazarov Tricked}
    For $w$ and $v$ non-negative functions in $L^1$, such that $\int w \geq \int v$, with distribution functions $W$ and $V$ respectively, then, if $W-V \leq 0$ on $[0,t_0]$ and $W - V \geq 0$ on $[t_0, \infty)$, it holds
        \[
           \int w^p \geq \int v^p
        \]
        for $p \geq 1$.
\end{lem}

\begin{proof}
For $\varphi(x)$ convex and smooth, and $x \geq 0$, Taylor expansion gives,
\begin{align*}
    \varphi(x) = \varphi(0) + x \varphi'(0) + \int_0^\infty [x - t]_+ \varphi''(t) dt.
\end{align*}
For $\varphi(0) = \varphi'(0) = 0$, applying the Taylor expansion and then \eqref{eq: generalized distribution function integral formula},
\begin{align*}
    \int \varphi(w(y)) dy
=
            \int_0^\infty \left(\int [w(y) - t]_+ dy \right) \varphi''(t) dt
=
            \int_0^\infty \left(\int_t^\infty  W(\lambda) d\lambda \right) \varphi''(t) dt .
\end{align*}
Thus to prove $\int \varphi(w) \geq \int \varphi(v)$, it suffices to prove 
$
    \Psi(t) = \int_t^\infty W(\lambda) - V(\lambda) d\lambda \geq 0,
$
for $t \geq 0$.
Since $\Psi(0) = \int w - \int v \geq 0$, by assumption, and $\Psi'(t) = V(t)- W(t) \geq 0$ on $[0,t_0]$ so that $\Psi(t) \geq 0$ on $[0,t_0]$.  For $t \geq t_0$ the result is immediate from $W(\lambda) - V(\lambda) \geq 0$ for $\lambda \geq t \geq t_0$.  Taking $\varphi(x) = x^p$ gives the result.
\end{proof}

The connection between ``hockey stick'' integrands and general convex functions is well known in the theory of majorization \cite{Cho74,Joe87}.  The result above can also be obtained through the lemma of Nazarov-Podkorytov  \cite{NP00}.  See \cite{melbourne2021transport} for more on the connections between the lemma of Nazarov-Podkorytov and majorization, and see \cite{melbourne2021quantitative} where the lemma is used to derive a quantiative min-entropy power in the continuous setting.}
This lemma will be used to derive the following main theorem.
\begin{thm} \label{thm: Bernoulli Lp Bounds}
    For $X$ a Bernoulli with variance $\sigma^2$, then $q \geq 1$ implies
    \begin{align*}
        \frac{1}{2 \pi} \int_{-\pi}^\pi |\mathbb{E}e^{itX}|^q dt \leq \frac 1 { \sqrt{6 \sigma^2 q}} \int_0^{ \sqrt{6 \sigma^2 q} } e^{-t^2/2} dt.
    \end{align*}
\end{thm}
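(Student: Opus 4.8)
The plan is to convert the claimed estimate into a comparison of $L^{q}$-type integrals over $[-\pi,\pi]$ and then apply Lemma~\ref{lem: Nazarov Trick}. Write $p=\mathbb P(X=1)$, so $\sigma^2=p(1-p)\in(0,\tfrac14]$ and $|\hat f_X(t)|^2=|1-p+pe^{it}|^2=1-4\sigma^2\sin^2(t/2)$. A direct change of variables gives
\[
\frac{1}{2\pi}\int_{-\pi}^{\pi}e^{-3\sigma^2 q t^2/\pi^2}\,dt=\frac{1}{\sqrt{6\sigma^2 q}}\int_0^{\sqrt{6\sigma^2 q}}e^{-t^2/2}\,dt ,
\]
so the theorem is equivalent to $\int_{-\pi}^{\pi}v^{q/2}\le\int_{-\pi}^{\pi}w^{q/2}$ for $q\ge 1$, where $v(t)=\bigl(1-4\sigma^2\sin^2(t/2)\bigr)\mathbf 1_{[-\pi,\pi]}(t)$ and $w(t)=e^{-6\sigma^2 t^2/\pi^2}\mathbf 1_{[-\pi,\pi]}(t)$ (note $v^{q/2}=|\hat f_X|^q$ and $w^{q/2}=e^{-3\sigma^2 qt^2/\pi^2}$ on $[-\pi,\pi]$).

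I would then invoke Lemma~\ref{lem: Nazarov Trick} with the exponent $s=q/2\ge\tfrac12$. The base case $s_0=\tfrac12$ is immediate: using $e^{-y}\ge 1-y$, $\int_{-\pi}^{\pi}w^{1/2}\ge\int_{-\pi}^{\pi}(1-3\sigma^2 t^2/\pi^2)\,dt=2\pi(1-\sigma^2)$, while using $\sqrt{1-x}\le 1-\tfrac x2$ (valid since $4\sigma^2\sin^2(t/2)\le 1$), $\int_{-\pi}^{\pi}v^{1/2}\le\int_{-\pi}^{\pi}(1-2\sigma^2\sin^2(t/2))\,dt=2\pi(1-\sigma^2)$; hence $\int v^{1/2}\le\int w^{1/2}$. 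Granting the hypothesis of the lemma on the distribution functions $W$ and $V$ of $w$ and $v$, its conclusion gives $\int v^s\le\int w^s$ for every $s\ge\tfrac12$, and taking $s=q/2$ with $q\ge 1$ completes the argument.

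The main obstacle — the only genuinely nontrivial step — is verifying the crossing hypothesis, i.e. producing $t_0$ with $W-V\le 0$ on $[0,t_0]$ and $W-V\ge 0$ on $[t_0,\infty)$. Since $v$ and $w$ are even, strictly decreasing on $[0,\pi]$, and share the value $1$ at the origin, a layer-cake (horizontal-slicing) comparison reduces this to showing that $w-v$ changes sign \emph{at most once} on $(0,\pi)$; and $w-v$ is positive near $0$ because $w(t)-v(t)=\sigma^2t^2(1-6/\pi^2)+o(t^2)$. Equivalently, with $\Phi(t):=-\log\!\bigl(1-4\sigma^2\sin^2(t/2)\bigr)-6\sigma^2t^2/\pi^2$, I must show $\Phi$ is positive-then-negative (or nonnegative) on $(0,\pi)$. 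Differentiating, $\Phi'$ has the same sign as $P(t):=\tfrac{\pi^2}{6}\sin t-t+2\sigma^2 t(1-\cos t)$, and the crux is that $P$ is \emph{strictly concave} on $(0,\pi)$: one computes $P''(t)=(4\sigma^2-\tfrac{\pi^2}{6})\sin t+2\sigma^2 t\cos t$, and on $(0,\pi/2)$ division by $\sin t$ together with $t\cot t<1$ gives $P''(t)/\sin t<6\sigma^2-\tfrac{\pi^2}{6}<0$, where the final inequality uses $\sigma^2\le\tfrac14<\tfrac{\pi^2}{36}$; on $[\pi/2,\pi)$ concavity is forced by the signs of $\sin$ and $\cos$. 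Since $P(0)=0$, $P'(0)=\tfrac{\pi^2-6}{6}>0$ and $P(\pi)=\pi(4\sigma^2-1)\le 0$, strict concavity leaves $P$ exactly one zero in $(0,\pi]$; hence $\Phi$ is strictly unimodal (increasing then decreasing) on $(0,\pi)$. Thus either $\Phi\ge 0$ throughout $(0,\pi)$ — precisely when $1-4\sigma^2\le e^{-6\sigma^2}$, in which case $w\ge v$ pointwise and the conclusion is immediate — or $\Phi$, hence $w-v$, changes sign exactly once, which is the required structure. Everything outside this concavity computation is routine bookkeeping.
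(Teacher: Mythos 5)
Your proposal is correct and follows the same overall architecture as the paper: reduce the claim to a comparison $\int v^{s}\le\int w^{s}$ of $L^s$-integrals of two even, decreasing profiles on $[-\pi,\pi]$, verify a base case of that comparison by elementary tangent-line bounds, establish that $w-v$ changes sign at most once on $(0,\pi)$, and then let Lemma~\ref{lem: Nazarov Trick} carry the base case up through all larger exponents. Your normalization differs only cosmetically from the paper's: you take $v=|\hat f_X|^2$ and $w=e^{-6\sigma^2t^2/\pi^2}$ with exponent $s=q/2$ and base case $s_0=1/2$, while the paper uses the square roots $v_\lambda,w_\lambda$ with exponent $s=q$ and base case $s_0=1$; these are identical comparisons. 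Where you genuinely diverge is in the proof of the one-crossing property, which the paper isolates as Lemma~\ref{lem: no more than one zero}. The paper works with $H=w_\lambda^2-v_\lambda^2$, solves $H'(t)=0$ for $\lambda$ to obtain an implicit curve $\lambda(t)=\frac{\pi^2\log(6t/(\pi^2\sin t))}{3t^2}$, and shows this curve is strictly monotone by a two-layer derivative computation (proving an auxiliary $f$ is increasing via $f''>0$). You instead pass to $\Phi=\log(w/v)$, observe $\Phi'$ has the sign of $P(t)=\tfrac{\pi^2}{6}\sin t-t+2\sigma^2t(1-\cos t)$, and dispatch the problem by showing $P''<0$ on $(0,\pi)$ using only $t\cot t<1$ and $\sigma^2\le 1/4<\pi^2/36$; with $P(0)=0$, $P'(0)>0$, $P(\pi)\le 0$ this gives unimodality of $\Phi$ directly. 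Your concavity route is shorter and avoids the paper's implicit-function bookkeeping, so it is a modest simplification of that lemma; everything else (the base case, the passage from one crossing of $w-v$ to the distribution-function hypothesis of the Nazarov--Podkorytov lemma, and the change of variables identifying $\frac1{2\pi}\int_{-\pi}^{\pi}w^{q/2}$ with the Gaussian expression) is sound and matches the paper's reasoning.
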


{ The constant $6$ in inequality in Theorem \ref{thm: Bernoulli Lp Bounds}, is the best possible\footnote{Note it can be shown that the function $\Phi(x) = \frac{\int_0^x e^{-t^2/2} dt }{x}$ is decreasing in $x$, so that larger constants reflect a stronger inequality.}, and allows the derivation of several sharp inequalities in the sequel.  However, at a very small loss, a completely elementary argument, for which we thank an anonymous reviewer, yields the following.
\begin{prop} \label{prop: reviewer}
    For $X$ a Bernoulli with variance $\sigma^2$, then $q \geq 1$ implies
    \begin{align*}
        \frac{1}{2 \pi} \int_{-\pi}^\pi |\mathbb{E}e^{itX}|^q dt \leq \frac 1 { \sqrt{4 \sigma^2 q}} \int_0^{ \sqrt{4 \sigma^2 q} } e^{-t^2/2} dt.
    \end{align*}
\end{prop}

\begin{proof}
Observe that Bernoulli $X$ with variance $\sigma^2$, the norm squared of its characteristic function can be expressed as a convex combination of the cosine function and the constant function one, namely 
\begin{align}\label{eq: cosine expression of char. func}
    |\mathbb{E} e^{itX}| = \sqrt{(1-\lambda) + \lambda \cos (t)},
\end{align} where $\lambda = 2 \sigma^2 \in [0, 1/2]$.
Using $1-x \leq  e^{-x}$
 together with $\sin s \geq  \frac 2 \pi s$ for $0 \leq s \leq \pi / 2$, we have
 \begin{align*}
     1- \lambda + \lambda \cos t \leq e^{- \lambda (1-\cos t)} = e^{- 2\lambda \sin^2(t/2)}  \leq e^{-2 \lambda t^2/\pi^2}
 \end{align*}

Hence with $\lambda = 2 \sigma$
\begin{align*}
    \frac{1}{2 \pi} \int_{-\pi}{\pi} |\mathbb{E} e^{it X} |^q dt 
        &=
            \frac 1 {2 \pi} \int_{- \pi}^{\pi} | 1 - \lambda + \lambda \cos t |^{q/2} dt
                \\
        &\leq 
           \frac 1 {2 \pi} \int_{- \pi}^{\pi} e^{-\lambda qt^2/\pi^2} dt 
                \\
        &=
            \frac{1}{\sqrt{2 \lambda q}} \int_0^{\sqrt{2 \lambda q }} e^{-t^2/2} dt
                =
                 \frac{1}{\sqrt{4 \sigma^2 q}} \int_0^{\sqrt{4 \sigma^2 q }} e^{-t^2/2} dt.   
\end{align*}
\end{proof}

To pursue the sharp inequality, the expression \eqref{eq: cosine expression of char. func} and Lemma \ref{lem: Nazarov Tricked} motivate }the following definitions for $\lambda \in [0,1/2]$
\begin{align*}
    v_\lambda(t) \coloneqq \sqrt{(1-\lambda) +\lambda \cos t }
        \hspace{8mm}
    w_\lambda(t) \coloneqq \exp \left[\frac{-3 \lambda t^2}{ 2\pi^2} \right].
\end{align*}
We first claim that $\int_0^\pi v_\lambda^s(t) dt < \int_0^\pi w_\lambda^s(t) dt$ holds when $s = 1$.

\begin{lem}\label{lem: base case}
When $s = 1$, and $\lambda \in (0,1/2]$, $\int_0^\pi v_\lambda^s(t) dt > \int_0^\pi w_\lambda^s(t) dt$, or
\begin{align*}
    \int_0^\pi \sqrt{(1-\lambda) + \lambda \cos(t)}  dt
        < \int_0^\pi  \exp\left\{\frac{-3 \lambda t^2}{2 \pi^2} \right\} dt.
\end{align*}
\end{lem}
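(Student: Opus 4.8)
The plan is to first bound the square-root integrand by a Gaussian-type integrand using the elementary inequality $1-y\le e^{-y}$, and then to compare the resulting integral with the right-hand side by a convex-ordering (``mean-preserving spread'') argument, after a change of variables that exposes an arcsine structure on both sides.

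\emph{Reduction to a Gaussian integrand.} Writing $1-\cos t = 2\sin^2(t/2)$ we have $v_\lambda(t)=\sqrt{1-2\lambda\sin^2(t/2)}$, with $2\lambda\sin^2(t/2)\in[0,1]$ when $\lambda\in(0,1/2]$. Since $1-y<e^{-y}$ for $y>0$, this gives $v_\lambda(t)<e^{-\lambda\sin^2(t/2)}$ on $(0,\pi)$, hence
\[
  \int_0^\pi v_\lambda(t)\,dt < g(\lambda),\qquad g(\lambda)\coloneqq\int_0^\pi e^{-\lambda\sin^2(t/2)}\,dt .
\]
So it suffices to prove $g(\lambda)\le R(\lambda)$, where $R(\lambda)\coloneqq\int_0^\pi w_\lambda(t)\,dt$.

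\emph{Change of variables.} The substitution $u=\sin^2(t/2)$ turns $g$ into $\int_0^1 e^{-\lambda u}\,\frac{du}{\sqrt{u(1-u)}}$, the Laplace transform of the finite measure $d\mu(u)=\mathbf 1_{(0,1)}(u)\,\frac{du}{\sqrt{u(1-u)}}$; the substitution $r=3t^2/(2\pi^2)$ turns $R$ into $\int_0^{3/2} e^{-\lambda r}\,\frac{\pi\,dr}{\sqrt{6r}}$, the Laplace transform of $d\nu(r)=\mathbf 1_{(0,3/2)}(r)\,\frac{\pi\,dr}{\sqrt{6r}}$. Elementary computations give $\mu(\mathbb R)=\nu(\mathbb R)=\pi$ and $\int x\,d\mu(x)=\int x\,d\nu(x)=\pi/2$; in fact the constant $3/(2\pi^2)$ in $w_\lambda$ is exactly the one forcing the first moments to agree.

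\emph{Convex order.} I will show $\mu\preceq_{cx}\nu$, i.e.\ $\int f\,d\mu\le\int f\,d\nu$ for every convex $f$; applying this to the convex function $f(x)=e^{-\lambda x}$ yields $g(\lambda)\le R(\lambda)$ and finishes the proof. To prove $\mu\preceq_{cx}\nu$, examine the density difference $\frac{d\nu}{dx}-\frac{d\mu}{dx}$: on $(0,1)$ it equals $x^{-1/2}\bigl(\tfrac{\pi}{\sqrt6}-(1-x)^{-1/2}\bigr)$, which is positive for $x<a$ and negative for $a<x<1$, where $a=1-6/\pi^2\in(0,1)$; on $(1,3/2)$ it equals $\pi/\sqrt{6x}>0$; and it vanishes elsewhere. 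So the sign pattern in $x$ is $+,-,+$, with the negative block the interval $[a,1]$. For any convex $f$, let $\ell$ be the chord of $f$ through $(a,f(a))$ and $(1,f(1))$; since $\int d(\nu-\mu)=0$ and $\int x\,d(\nu-\mu)=0$, we have $\int f\,d(\nu-\mu)=\int(f-\ell)\,d(\nu-\mu)$, and by convexity $f-\ell\le0$ on $[a,1]$ and $f-\ell\ge0$ off $[a,1]$. Thus the integrand $(f-\ell)\cdot\bigl(\tfrac{d\nu}{dx}-\tfrac{d\mu}{dx}\bigr)$ is everywhere nonnegative, so $\int f\,d\nu\ge\int f\,d\mu$.

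\emph{The main obstacle.} The delicate step is the convex-ordering one: pinning down the $+,-,+$ sign pattern and verifying the two moment identities precisely (so that subtracting the chord is legitimate), and keeping the orientation of the convex order straight --- it is the ``spread-out'' measure $\nu$ that dominates. The integrability at $x=0$ and $x=1$ needed to justify the manipulations is harmless, since $x^{-1/2}$ and $(1-x)^{-1/2}$ are integrable there. Steps one and two are routine calculus.
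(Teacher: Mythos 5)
Your proposal is correct, but it follows a genuinely different and much heavier path than the paper's. The paper's own proof is a two-line calculation: it bounds the left integrand from above via $\sqrt{1-x} < 1 - x/2$ (with $x = \lambda(1-\cos t)$), bounds the right integrand from below via $e^x \ge 1 + x$, and observes that both resulting linear approximants integrate to exactly $\pi\bigl(1 - \lambda/2\bigr)$; that is, the two sides are sandwiched around the common value $\pi(1-\lambda/2)$. Your route instead replaces $v_\lambda$ by $e^{-\lambda\sin^2(t/2)}$ via $\sqrt{1-y}\le e^{-y/2}$, rewrites both integrals as Laplace transforms of explicit measures $\mu$ and $\nu$ on the line after $u=\sin^2(t/2)$ and $r = 3t^2/(2\pi^2)$, and then establishes the convex ordering $\mu\preceq_{cx}\nu$ by a chord argument, using the $+,-,+$ sign pattern of the density difference together with equality of total mass and first moment. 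I checked the arithmetic: $\mu(\mathbb{R}) = \nu(\mathbb{R}) = \pi$, $\int x\,d\mu = \int x\,d\nu = \pi/2$, and the crossover point $a = 1 - 6/\pi^2 \in (0,1)$; the chord argument is sound, and $e^{-\lambda x}$ is convex, so $g(\lambda)\le R(\lambda)$ follows. What your approach buys: it exposes \emph{why} the constant $3/(2\pi^2)$ is the right one (it matches first moments) and proves the stronger fact $\int_0^\pi e^{-\lambda\sin^2(t/2)}\,dt \le \int_0^\pi w_\lambda(t)\,dt$ for \emph{all} $\lambda>0$, not merely $\lambda\in(0,1/2]$. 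What it costs: several nontrivial steps (change of variables, moment computations, the chord lemma) in place of two Taylor bounds. If you simply wanted the stated lemma, the paper's observation that $\pi(1-\lambda/2)$ sits between the two sides is far more economical; but your convex-order reformulation is an attractive alternative and clarifies the structure behind the chosen Gaussian majorant.
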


\begin{proof}
Using the inequality $\sqrt{ 1- x} < 1- \frac x 2$ for $x \in (0,1]$,
\begin{align*}
    \int_0^\pi \sqrt{(1-\lambda) + \lambda \cos(t)}  dt
        <
            \int_0^\pi 1 - \lambda \frac{ 1- \cos(t)}{2} dt
        =
            \pi (1 - \lambda/2).
\end{align*}
Meanwhile the inequality and $e^x \geq 1 + x$, gives
\begin{align*}
    \int_0^\pi \exp \left[ \frac{-3 \lambda t^2}{2 \pi^2} \right] dt
        \geq
            \int_0^\pi 1 + \frac{-3 \lambda t^2}{2 \pi^2} dt
        =
            \pi(1 -  \lambda/2),
\end{align*}
so that $\int_0^\pi w_\lambda(t) dt > \int_0^\pi v_\lambda(t) dt$ as claimed.
\end{proof}


\begin{lem} \label{lem: no more than one zero}
For $\lambda >0$, the function $t \mapsto w_\lambda(t) - v_\lambda(t)$ has no more than one zero on $(0,\pi]$
\end{lem}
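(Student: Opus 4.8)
The plan is to compare the two functions $v_\lambda$ and $w_\lambda$ by looking at the ratio (or a logarithmic difference) rather than the raw difference, since both are positive on $[0,\pi)$, and to show that this ratio is monotone or at least changes monotonicity in a controlled way. Concretely, I would set $g(t) = \log w_\lambda(t) - \log v_\lambda(t) = -\frac{3\lambda t^2}{2\pi^2} - \frac12 \log\bigl((1-\lambda) + \lambda\cos t\bigr)$ on $[0,\pi)$; a zero of $w_\lambda - v_\lambda$ is exactly a zero of $g$. Note $g(0) = -\frac12\log 1 = 0$, so $t=0$ is always a root of $g$, but it lies outside the open interval $(0,\pi]$; what we need is that $g$ has at most one zero in $(0,\pi]$. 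I would compute $g'(t) = -\frac{3\lambda t}{\pi^2} + \frac{\lambda \sin t}{2\bigl((1-\lambda) + \lambda \cos t\bigr)}$ and try to show $g'$ has at most one zero on $(0,\pi)$ (or is eventually monotone of one sign), which by Rolle would cap the number of zeros of $g$ on $(0,\pi]$ at two total — one at $0$, one in $(0,\pi]$ — giving the claim.

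The first key step is the behavior near $0$: expanding, $g(t) = -\frac{3\lambda t^2}{2\pi^2} + \frac{\lambda t^2}{4} + O(t^4) = \lambda t^2\bigl(\frac14 - \frac{3}{2\pi^2}\bigr) + O(t^4)$, and since $\frac14 - \frac{3}{2\pi^2} = \frac14 - \frac{3}{2\pi^2} > 0$ (as $\pi^2 > 6$), $g$ is positive just to the right of $0$. The second key step is the behavior at $\pi$: there $v_\lambda(\pi) = \sqrt{1-2\lambda} \ge 0$ and $w_\lambda(\pi) = e^{-3\lambda/2} > 0$, so $g(\pi^-)$ is finite unless $\lambda = 1/2$ (where $v_\lambda(\pi)=0$ and $g \to +\infty$). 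In either case I expect $g(\pi) > 0$ for the relevant range — this should follow from Lemma~\ref{lem: base case}'s style of estimates or a direct check — so $g$ is positive at both ends of $(0,\pi]$. Combined with a sign analysis of $g'$ showing it vanishes at most once in the interior, this forces $g$ to stay positive throughout $(0,\pi]$, i.e. $w_\lambda > v_\lambda$ there, and in particular $w_\lambda - v_\lambda$ has \emph{no} zero on $(0,\pi]$, which certainly gives ``no more than one.''

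The main obstacle will be controlling $g'$: the term $\frac{\lambda \sin t}{2((1-\lambda)+\lambda\cos t)}$ is not monotone and its denominator degenerates as $\lambda \to 1/2$, $t\to\pi$. I would handle this by clearing denominators — multiplying $g'(t)$ by the positive quantity $\frac{2\pi^2}{\lambda}\bigl((1-\lambda)+\lambda\cos t\bigr)$ to reduce the sign question to that of $h(t) = \pi^2 \sin t - 6t\bigl((1-\lambda)+\lambda\cos t\bigr)$, and then analyze $h$ and $h'$ directly; $h$ is an explicit trigonometric polynomial in $t$ and one can bound the number of its sign changes on $(0,\pi)$ by elementary means (concavity of $\sin$, convexity estimates on $t\cos t$, and the fact that the $\lambda$-dependence is affine, so it suffices to check the two extreme values $\lambda = 0$ and $\lambda = 1/2$, or to treat $\lambda$ as a parameter in a one-parameter family). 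If a clean ``at most one interior critical point'' statement for $h$ proves delicate, the fallback is to prove directly that $g > 0$ on all of $(0,\pi]$ via the two elementary bounds $\sqrt{1-x} \le 1 - \frac{x}{2} - \frac{x^2}{8}$ and $e^{-y} \ge 1 - y + \frac{y^2}{2} - \frac{y^3}{6}$ applied with $x = \lambda(1-\cos t)$ and $y = \frac{3\lambda t^2}{2\pi^2}$, which sidesteps the derivative analysis entirely and still yields the conclusion (no zero, hence at most one).
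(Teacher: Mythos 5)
Your overall strategy (take a monotone transform of $w_\lambda-v_\lambda$, show its derivative vanishes at most once, and combine with the value at $t=0$) is in the same spirit as the paper's proof, which works with $H(t)=w_\lambda(t)^2-v_\lambda(t)^2$ rather than your $g(t)=\log w_\lambda(t)-\log v_\lambda(t)$. That part of the plan is sound: since $g(0)=0$ and your Taylor expansion correctly shows $g>0$ just to the right of $0$, if one can show $g'$ has at most one zero on $(0,\pi)$, then $g$ is increasing-then-possibly-decreasing and so can have at most one zero in $(0,\pi]$.

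However, the proposal then goes off the rails at the ``behavior at $\pi$'' step. The claim $g(\pi)>0$, equivalently $w_\lambda(\pi)\geq v_\lambda(\pi)$, is \emph{false} for small $\lambda$. Indeed $v_\lambda(\pi)=\sqrt{1-2\lambda}=1-\lambda+O(\lambda^2)$ while $w_\lambda(\pi)=e^{-3\lambda/2}=1-\tfrac{3\lambda}{2}+O(\lambda^2)$, so $w_\lambda(\pi)-v_\lambda(\pi)=-\tfrac{\lambda}{2}+O(\lambda^2)<0$ for $\lambda$ small (for instance at $\lambda=0.1$ one has $v_\lambda(\pi)\approx 0.894$ versus $w_\lambda(\pi)\approx 0.861$). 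The paper itself deals explicitly with exactly this case in the proof of Theorem~\ref{thm: Bernoulli Lp Bounds} (``If $w_\lambda(\pi)-v_\lambda(\pi)<0$, then $w_\lambda-v_\lambda$ has exactly one zero''), so in that regime there genuinely \emph{is} one crossing, and the conclusion you were hoping for (``no zero on $(0,\pi]$'') is simply not what's true. This also kills the proposed fallback: the inequality $1-y+\tfrac{y^2}{2}-\tfrac{y^3}{6}\geq 1-\tfrac{x}{2}-\tfrac{x^2}{8}$ would force $w_\lambda\geq v_\lambda$ pointwise, which the computation above refutes at $t=\pi$.

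There is also a logical gap in the proposed handling of $g'$: after clearing denominators to the trigonometric polynomial $h(t)=\pi^2\sin t-6t\bigl((1-\lambda)+\lambda\cos t\bigr)$, you suggest that because $h$ is affine in $\lambda$ ``it suffices to check $\lambda=0$ and $\lambda=1/2$.'' That inference is not valid --- affine dependence on $\lambda$ at each fixed $t$ does not control the number of sign changes of $t\mapsto h(t)$ for intermediate $\lambda$. The paper avoids this by solving $H'(t)=0$ for $\lambda$ as a function of $t$ and showing the resulting map $t\mapsto\lambda(t)$ is strictly increasing on $(0,\pi)$, hence each $\lambda$ is hit exactly once; that is the step doing the real work, and your outline does not contain a substitute for it. To repair your version you would need an analogous monotonicity statement for the solution of $\pi^2\sin t=6t\bigl((1-\lambda)+\lambda\cos t\bigr)$ in $\lambda$, and then drop the unfounded claims about $g(\pi)$.
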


The proof is calculus computations and we leave it to an appendix.

\begin{proof}[Proof of Theorem \ref{thm: Bernoulli Lp Bounds}]
As functions of $t$, $v_\lambda$ and $w_\lambda$ are both strictly decreasing on $[0,\pi]$.  Thus, their respective distribution functions $V_\lambda$ and $W_\lambda$ are just their strictly decreasing inverse functions on $[0,1]$.  Since $w_\lambda > v_\lambda$ for small $t$, $W_\lambda > V_\lambda$ for $y$ close to $1$, and since $w_\lambda$ and $v_\lambda$ cross at no more than one point (by Lemma \ref{lem: no more than one zero}), $V_\lambda$ and $W_\lambda$ cross at no more than one point.  We consider two cases. If  $w_\lambda(\pi) - v_\lambda(\pi) \geq 0$, then $w_\lambda - v_\lambda \geq 0$ on $[0,\pi]$ and the theorem holds immediately.  If $w_\lambda(\pi) - v_\lambda (\pi) <0$, then $w_\lambda - v_\lambda$ has exactly one zero, and hence $W_\lambda- V_\lambda$ has exactly one zero. {This in concert with Lemma \ref{lem: base case} shows that $W_\lambda$ and $V_\lambda$  satisfy the conditions of Lemma \ref{lem: Nazarov Tricked}. Thus 
\begin{align*} 
\int_0^\pi w_\lambda^s(t) dt > \int^\pi  v_\lambda^s(t) dt
\end{align*}
for all $s \geq 1$. This is equivalent to our conclusion since,}
$$
\frac{1}{2 \pi} \int_{-\pi}^\pi |\mathbb{E}e^{itX}|^q dt = \frac{1}{\pi} \int_0^\pi v_\lambda^q, 
$$
and 
$$
\frac{1}{\pi} \int_0^\pi w_\lambda^q
=
\frac{1}{\pi} 
\int_0^\pi e^{-\frac{3q\lambda t^2}{2 \pi^2}}dt 
=
\frac 1 { \sqrt{3 q \lambda}} \int_0^{ \sqrt{3q \lambda} } e^{-u^2/2} du 
= 
\frac 1 { \sqrt{6 \sigma^2 q}} \int_0^{ \sqrt{6 \sigma^2 q} } e^{-u^2/2} du 
$$
where the second inequality follows from the change of variable 
$u = \sqrt{3q\lambda}t/\pi$ and the last one
from the fact that $\lambda=2\sigma^2$.
\end{proof}

Our next aim is to extend the previous comparison to a finite or infinite sum of independent Bernoulli random variables. We will use the following lemma.

\begin{lem}\label{lem: Fourier bound to fourier bound on sum}
Fix $\Phi \colon (0,\infty) \to [0,\infty)$.
Suppose that $X_i$ are independent random variables such that
\begin{align*}
    \| \hat{f}_{X_i} \|_q^q \leq \Phi(c_i q)
\end{align*}
holds for all $q \geq 1$ and some $c_i >0$. Then
\begin{align*}
    \| \hat{f}_{\sum_i X_i} \|_q^q \leq \Phi( c q)
\end{align*}
holds for all $q \geq 1$ as well, with $c = \sum_i c_i$.
\end{lem}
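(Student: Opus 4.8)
The plan is to exploit the fact that the characteristic function of an independent sum factorizes, $\hat{f}_{\sum_i X_i}(t) = \prod_i \hat{f}_{X_i}(t)$, so that $|\hat{f}_{\sum_i X_i}(t)|^q = \prod_i |\hat{f}_{X_i}(t)|^q$, and then to apply the generalized H\"older inequality on the probability space $([-\pi,\pi], \frac{dt}{2\pi})$ with H\"older exponents dictated by the weights $c_i$. The point is that distributing the exponent $q$ among the factors in a way proportional to the $c_i$ is exactly what makes the arguments $c_i q/\theta_i$ of $\Phi$ collapse to the single value $cq$.

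Concretely, set $c = \sum_i c_i$ and $\theta_i = c_i/c$, so that $\sum_i \theta_i = 1$. Writing $|\hat{f}_{X_i}|^q = \bigl(|\hat{f}_{X_i}|^{q/\theta_i}\bigr)^{\theta_i}$ and applying H\"older with the conjugate exponents $1/\theta_i$ (which is legitimate since $\frac{1}{2\pi}\int_{-\pi}^\pi \,dt = 1$ and the same normalization appears on both sides), one gets
\[
\| \hat{f}_{\sum_i X_i} \|_q^q = \frac{1}{2\pi}\int_{-\pi}^\pi \prod_i |\hat{f}_{X_i}(t)|^q\,dt \;\le\; \prod_i \left(\frac{1}{2\pi}\int_{-\pi}^\pi |\hat{f}_{X_i}(t)|^{q/\theta_i}\,dt\right)^{\theta_i} = \prod_i \left(\| \hat{f}_{X_i} \|_{q/\theta_i}^{q/\theta_i}\right)^{\theta_i}.
\]
Since $q \ge 1$ and $\theta_i \in (0,1]$, each exponent satisfies $q/\theta_i \ge q \ge 1$, so the hypothesis applies and yields $\| \hat{f}_{X_i} \|_{q/\theta_i}^{q/\theta_i} \le \Phi\!\left(c_i \cdot q/\theta_i\right) = \Phi(cq)$. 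Substituting and using $\sum_i\theta_i=1$ gives $\| \hat{f}_{\sum_i X_i} \|_q^q \le \prod_i \Phi(cq)^{\theta_i} = \Phi(cq)$, as desired.

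I do not expect a serious obstacle in the finite-sum case; the only things to verify are that the normalized integral is a probability measure (so the weighted H\"older inequality applies cleanly) and that $q/\theta_i\ge 1$. The one genuinely extra point is the countably infinite family: here, since each $|\hat{f}_{X_i}|\le 1$, the partial products $\prod_{i\le N}|\hat{f}_{X_i}|^q$ decrease pointwise to $\prod_i|\hat{f}_{X_i}|^q$, so monotone convergence reduces the statement to the finite case with $c_N=\sum_{i\le N}c_i$, and one passes to the limit using $c_N\uparrow c$ together with (left-)continuity of $\Phi$ — which holds in all the applications of interest, where $\Phi$ is the explicit function appearing in Theorem \ref{thm: Bernoulli Lp Bounds}. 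If one wishes to avoid any regularity hypothesis on $\Phi$ altogether, the lemma may simply be read for finite sums, with infinite sums handled by a separate weak-limit argument where they occur.
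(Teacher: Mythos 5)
Your proof is correct and is essentially identical to the paper's: you both factor the characteristic function of the sum, apply the generalized H\"older inequality on the normalized interval, and choose the H\"older exponents $q_i = c/c_i$ (your $1/\theta_i$) precisely so that every argument of $\Phi$ collapses to $cq$. The only addition is your remark on the countably infinite case, which the paper instead handles by applying the lemma to finite sums and passing to weak limits afterward.
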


\begin{proof}
    By independence and H\"older's inequality for $\sum_i \frac 1 {q_i} = 1$,
    \begin{align*}
        \| \hat{f}_{\sum_i X_i}\|_q^q
            =
                \| \prod_i \hat{f}_{X_i}^q \|_1
            \leq
                \prod_i \| \hat{f}_{X_i}^q \|_{q_i}
            =
                \prod_i \left( \| \hat{f}_{X_i} \|_{q q_i}^{q q_i} \right)^{\frac 1 {q_i}}.
    \end{align*}
Applying the hypothesis, and taking $q_i = \frac{c}{c_i}$,
\begin{align*}
    \| \hat{f}_{\sum_i X_i} \|_q^q
        \leq
            \prod_i \Phi^{\frac 1 {q_i}} ( c_i q_i q)
        =
          \Phi( cq).
\end{align*}
\end{proof}

Thanks to the previous Lemma, the bound of Theorem \ref{thm: Bernoulli Lp Bounds} transfers to Bernoulli sums.

\begin{thm} \label{thm: Bernoulli sum lp fourier bound}
Let $Y$ be a Bernoulli sum with variance $\sigma^2$.
Then
\begin{align} \label{eq: Bernoulli sums Lp fourier bound}
    \| \hat{f}_Y \|_q^q \leq \frac{1}{\sqrt{6 \sigma^2 q }} \int_0^{\sqrt{6 \sigma^2 q}} e^{-t^2/2} dt.
\end{align}
\end{thm}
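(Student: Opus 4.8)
The plan is to combine Theorem \ref{thm: Bernoulli Lp Bounds} with Lemma \ref{lem: Fourier bound to fourier bound on sum}, and then pass to the weak limit. First I would treat the case where $Y$ is itself a Poisson binomial, say $Y = X_1 + \cdots + X_n$ with $X_i$ independent Bernoulli of variance $\sigma_i^2$, so that $\sigma^2 = \sum_{i=1}^n \sigma_i^2$ (here I should discard any degenerate $X_i$ with $\sigma_i^2 = 0$, since they contribute a factor $1$ to the characteristic function and a term $0$ to the variance; if all are degenerate then $Y$ is constant, $\hat f_Y$ has modulus $1$, and the claimed bound reads $1 \le 1$, which holds). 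Set $\Phi(x) \coloneqq \frac{1}{\sqrt{x}} \int_0^{\sqrt x} e^{-t^2/2}\, dt$ for $x > 0$. By Theorem \ref{thm: Bernoulli Lp Bounds}, each $X_i$ satisfies $\|\hat f_{X_i}\|_q^q \le \Phi(c_i q)$ for all $q \ge 1$ with $c_i = 6\sigma_i^2 > 0$. Lemma \ref{lem: Fourier bound to fourier bound on sum} then gives $\|\hat f_Y\|_q^q \le \Phi(cq)$ with $c = \sum_i c_i = 6\sigma^2$, which is exactly \eqref{eq: Bernoulli sums Lp fourier bound}.

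Next I would handle the weak-limit step. Let $Y$ be a general Bernoulli sum, with $Y_n$ Poisson binomial converging weakly to $Y$ and $\Var(Y) = \sigma^2$. Weak convergence of integer-valued variables is equivalent to pointwise convergence of the densities $f_{Y_n} \to f_Y$, hence to uniform convergence of the characteristic functions $\hat f_{Y_n}(t) \to \hat f_Y(t)$ on the compact torus $[-\pi,\pi]$ (the densities are probability vectors and Scheffé's lemma upgrades pointwise to $\ell^1$ convergence, whence uniform convergence of the Fourier transforms). Since $|\hat f_{Y_n}|^q$ is bounded by $1$ and converges uniformly, $\|\hat f_{Y_n}\|_q^q \to \|\hat f_Y\|_q^q$ for each fixed $q \ge 1$. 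Writing $\sigma_n^2 \coloneqq \Var(Y_n)$, the previous paragraph gives $\|\hat f_{Y_n}\|_q^q \le \Phi(6\sigma_n^2 q)$. The main technical point is that $\Phi$ is continuous (indeed monotone) on $(0,\infty)$ and the left side of the bound does not depend on the variance, so it suffices to know $\sigma_n^2 \to \sigma^2$, or at least that $\liminf_n \sigma_n^2 \ge \sigma^2$ combined with monotonicity of $\Phi$ in the right direction.

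The step I expect to require the most care is precisely this control of the variances under weak convergence: weak convergence does \emph{not} in general imply convergence of second moments (mass can escape to infinity). I would address this as follows. If $\Var(Y_n) \to \infty$ along a subsequence, then along that subsequence $\Phi(6\sigma_n^2 q) \to \Phi(\infty) = \lim_{x\to\infty} \frac1{\sqrt x}\int_0^{\sqrt x} e^{-t^2/2}\,dt = 0$, forcing $\hat f_Y \equiv 0$ a.e.\ on $[-\pi,\pi]$, which is impossible since $|\hat f_Y(0)| = 1$ and $\hat f_Y$ is continuous; so $\liminf_n \sigma_n^2 < \infty$. Passing to a subsequence along which $\sigma_n^2 \to \sigma_\infty^2 \in [0,\infty)$, Fatou's lemma (lower semicontinuity of variance under weak convergence) gives $\sigma^2 \le \sigma_\infty^2$. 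Now $\Phi$ is decreasing on $(0,\infty)$ — this is essentially the statement that $x \mapsto \frac1{\sqrt x}\int_0^{\sqrt x}e^{-t^2/2}\,dt$ is decreasing, which follows since the average of $e^{-t^2/2}$ over $[0,\sqrt x]$ decreases as $x$ grows — so $\Phi(6\sigma_\infty^2 q) \le \Phi(6\sigma^2 q)$. Taking limits along the subsequence in $\|\hat f_{Y_n}\|_q^q \le \Phi(6\sigma_n^2 q)$ yields $\|\hat f_Y\|_q^q = \lim_n \|\hat f_{Y_n}\|_q^q \le \Phi(6\sigma_\infty^2 q) \le \Phi(6\sigma^2 q)$, which is \eqref{eq: Bernoulli sums Lp fourier bound}. (If $Y$ is a.s.\ constant the bound is the trivial $1 \le 1$, so we may assume $\sigma^2 > 0$ and the monotonicity argument applies cleanly; alternatively one checks directly that the inequality persists at $\sigma^2 = 0$.) This completes the proof.
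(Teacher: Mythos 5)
Your proof is correct. The Poisson--binomial case --- feeding Theorem~\ref{thm: Bernoulli Lp Bounds} into Lemma~\ref{lem: Fourier bound to fourier bound on sum} with $\Phi(x)=x^{-1/2}\int_0^{\sqrt x}e^{-t^2/2}\,dt$ --- is exactly the paper's argument. Where you diverge is the passage to the weak limit. The paper asserts $\Var(Y_n)\to\Var(Y)<\infty$, appealing to a \emph{uniform} exponential tail bound $f_{Y_n}(k)\le Ce^{-c|k|}$ derived from log-concavity of all the $Y_n$ and of $Y$; this uniformity is true but is not proved in the paper and requires a little care (the modes of the $Y_n$ could a priori drift). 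Your route avoids the issue: you use only (i) lower semicontinuity of $\Var$ under weak convergence (most cleanly via Portmanteau applied to the non-negative continuous function $(y,y')\mapsto(y-y')^2$ together with $\Var(Y)=\tfrac12\mathbb{E}(Y-Y')^2$, so one need not control $\mathbb{E}Y_n$ separately), (ii) the elementary fact that $\Phi$ is decreasing, and (iii) the observation that $\sigma_n^2\to\infty$ would force $\hat f_Y\equiv 0$, contradicting $\hat f_Y(0)=1$. This delivers the same bound from strictly weaker information about the variances, making the limiting step more self-contained than the paper's. One cosmetic point: extend $\Phi$ continuously by $\Phi(0^+)=1$ so that the monotonicity argument covers the degenerate case $\sigma^2=0$ without a separate branch.
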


\begin{proof}
Since $Y$ is a Bernoulli sum, there exist a sequence of $Y_n$ converging weakly to $Y$.  Given a $Y_n$ of the sequence, there exists $X_i$ independent Bernoulli with variance $\sigma_i^2$, such that $\sum_{i=1}^{m(n)} X_i$. Taking $c_i =\sigma_i^2$, $\Phi(x) = \frac{1}{\sqrt{6 x}} \int_0^{\sqrt{6x}} e^{-t^2/2} dt$, the hypothesis of Lemma \ref{lem: Fourier bound to fourier bound on sum} is satisfied for $X_i$ thanks to Theorem \ref{thm: Bernoulli Lp Bounds}, and the conclusion of the Lemma is
exactly \eqref{eq: Bernoulli sums Lp fourier bound}. 

Since the bound holds for each $Y_n$, it is enough to observe $\| \hat{f}_{Y_n} \|_q^q \to \| \hat{f}_{Y} \|_q^q$ and $\sigma^2_{Y_n} \to \sigma^2_{Y} < \infty$.  Note, that $Y$ is necessarily log-concave since is the limit of log-concave variables $Y_n$ (see Definition \ref{def:log-concave} for a definition). As a consequence there exists $C, c >0$ such that $f_{Y_n}(k) \leq Ce^{-c |k|}$ and $f_Y(k) \leq Ce^{-c|k|}$ holds for all $n$, and all $k \in \mathbb{Z}$.  Thus all moments exist and there is no difficulty passing limits, and by L\'evy's continuity theorem $\| \hat{f}_{Y_n} \|_q^q \to \| \hat{f}_{Y} \|_q^q$ to complete the result.
\end{proof}

Let us remark on the nature of the function $z \mapsto \frac{1}{z}\int_0^z e^{-t^2/2} dt$, as it will be useful to have simple upper bounds for our applications.

\begin{lem} \label{lem: Gaussian integral bound}
For $z \in (0,\infty)$,
    \begin{align*}
        \frac{1}{z}\int_0^z e^{-t^2/2} dt \leq \min \left\{ \frac 1 {\sqrt{ 1 + (z^2/3)}} , \sqrt{\frac \pi { 2z^2}} \right\}.
    \end{align*}
\end{lem}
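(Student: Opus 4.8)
The plan is to establish the two bounds appearing in the minimum separately. Throughout write $g(z)\coloneqq\int_0^z e^{-t^2/2}\,dt$, so that the left-hand side is $g(z)/z$; note $g'(z)=e^{-z^2/2}$, and since the integrand is at most $1$ on $[0,z]$ we have $0\le g(z)\le z$ for all $z\ge 0$.

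The bound $g(z)/z\le\sqrt{\pi/(2z^2)}$ is immediate: $g(z)\le\int_0^\infty e^{-t^2/2}\,dt=\sqrt{\pi/2}$, and one divides by $z$.

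For the bound $g(z)/z\le(1+z^2/3)^{-1/2}$, I would rephrase it as $H(z)\ge 1$ on $(0,\infty)$, where $H(z)\coloneqq z^2/g(z)^2-z^2/3$; indeed $H(z)\ge 1$ is the same as $g(z)^2(1+z^2/3)\le z^2$. A direct differentiation gives $H'(z)=\tfrac{2z}{3g(z)^3}\,\phi(z)$ with $\phi(z)\coloneqq 3g(z)-3z e^{-z^2/2}-g(z)^3$, so for $z>0$ the sign of $H'$ is that of $\phi$. Now $\phi(0)=0$, and after the cancellation $3e^{-z^2/2}-\bigl(3e^{-z^2/2}-3z^2 e^{-z^2/2}\bigr)$ one finds $\phi'(z)=3e^{-z^2/2}\bigl(z^2-g(z)^2\bigr)\ge 0$ because $g(z)\le z$. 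Hence $\phi\ge 0$ on $[0,\infty)$, so $H$ is nondecreasing there. Since $g(0)=0$ and $g'(0)=1$, L'H\^opital gives $\lim_{z\to 0^+}z^2/g(z)^2=1$ and therefore $H(0^+)=1$, whence $H\ge 1$ on $(0,\infty)$, which is exactly the desired inequality.

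I do not expect a real obstacle here; the points requiring care are the bookkeeping in the formula for $H'$ and the observation that $\phi'$ collapses to the manifestly nonnegative $3e^{-z^2/2}(z^2-g(z)^2)$, which is the crux of the argument. The boundary value $H(0^+)=1$ is the other small item, handled by L'H\^opital (or a two-term Taylor expansion of $g$ at the origin).
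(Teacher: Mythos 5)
Your proof is correct. The overall strategy matches the paper's: handle $\sqrt{\pi/(2z^2)}$ by the trivial bound $g(z)\le\sqrt{\pi/2}$, and handle the other term by rearranging the inequality into a monotonicity statement and differentiating twice. The difference is in the choice of auxiliary function. The paper works with $F(y)=1/g(y)^2-1/y^2$ and shows $F\ge 1/3$; you work with $H(z)=z^2/g(z)^2-z^2/3$ and show $H\ge 1$. These are algebraic rearrangements of the same target inequality, but they produce different second-derivative computations. The paper's $F'\ge 0$ reduces to $G(y)=g(y)^3-y^3e^{-y^2/2}\ge 0$, whose derivative $G'(y)=e^{-y^2/2}(3g^2-3y^2+y^4)$ is not obviously non-negative — the paper needs the Taylor bound $g(y)\ge y-y^3/6$ and a case split at $y=\sqrt{6}$. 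Your $H'\ge 0$ reduces to $\phi(z)=3g(z)-3ze^{-z^2/2}-g(z)^3\ge 0$, whose derivative collapses to $\phi'(z)=3e^{-z^2/2}(z^2-g(z)^2)$, which is non-negative simply because $g(z)\le z$. So your version of the same idea is tighter: the auxiliary function you chose makes the crucial sign check immediate, with no case split and no sub-Taylor estimate needed. The boundary check $H(0^+)=1$ plays the role of the paper's limit $F(0^+)=1/3$ and is equally routine.
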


{The proof is computational and included in the appendix \cite{BernoulliAppendix}.
\begin{remark}
Note that 
$
    \sqrt{\frac{\pi}{2 z^2}} \leq \left( 1 + \frac{z^2}{3} \right)^{-1/2}
$
exactly when $z \geq \sqrt{\frac{3 \pi}{6-\pi}} \approx 1.8158$.
\end{remark}


\section{R\'enyi Entropy Inequalities} \label{sec: Renyi entropy inequalities}

In this section we prove R\'enyi entropy inequalities for Bernoulli sums and their limits. We will use a less orthodox formulation of the well-known Hausdorff-Young's inequality to translate $L^{\alpha'}$ bounds on the characteristic functions of Bernoulli sums into $\alpha$-R\'enyi entropy bounds.

\begin{thm}[Hausdorff-Young]
For $p \in [2,\infty]$, and a random variable $X$ on $\mathbb{Z}$ with probability mass function $f$ then $\| f \|_p \leq \| \hat{f} \|_q$ where $\frac 1 p + \frac 1 q = 1$.
\end{thm}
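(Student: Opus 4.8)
The plan is to observe that the stated inequality is exactly the classical Hausdorff--Young inequality for Fourier series, read in the direction ``the Fourier coefficients of an $L^q$ function on the circle lie in $\ell^{q'}$''. First I would fix notation: since $\frac1p+\frac1q=1$ and $p\in[2,\infty]$ we have $q\in[1,2]$; view $\hat f(t)=\mathbb{E}e^{itX}=\sum_{n\in\mathbb{Z}}p_n e^{int}$ as a function on the torus $\mathbb{T}=[-\pi,\pi]$ equipped with the normalized measure $\tfrac{1}{2\pi}\,dt$, matching the convention $\|\hat f\|_q^q=\tfrac{1}{2\pi}\int_{-\pi}^\pi|\hat f|^q\,dt$ used throughout the paper. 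Since $|\hat f|\le1$ and $\mathbb{T}$ has finite measure, $\hat f\in L^r(\mathbb{T})$ for every $r\in[1,\infty]$, so the right-hand side is finite and the inequality is never vacuous. By Fourier inversion on the circle, $p_n=\tfrac{1}{2\pi}\int_{-\pi}^\pi\hat f(t)e^{-int}\,dt$, so the sequence $f=(p_n)_{n\in\mathbb{Z}}$ is precisely the sequence of Fourier coefficients of $\hat f$. Hence it suffices to prove that the Fourier-coefficient map $g\mapsto(c_n(g))_n$, with $c_n(g)\coloneqq\tfrac{1}{2\pi}\int_{-\pi}^\pi g(t)e^{-int}\,dt$, is an operator of norm at most one from $L^q(\mathbb{T})$ to $\ell^{q'}(\mathbb{Z})$ for each $q\in[1,2]$, and then apply this to $g=\hat f$, using $c_n(\hat f)=p_n$.

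To prove this contraction property I would interpolate (Riesz--Thorin) between two elementary endpoint bounds. At $q=2$, Parseval's identity gives $\sum_{n}|c_n(g)|^2=\tfrac{1}{2\pi}\int_{-\pi}^\pi|g|^2\,dt$, so the map is an isometry $L^2(\mathbb{T})\to\ell^2(\mathbb{Z})$. At $q=1$, the triangle inequality gives $|c_n(g)|\le\tfrac{1}{2\pi}\int_{-\pi}^\pi|g(t)|\,dt$ for every $n$, so the map has norm at most one from $L^1(\mathbb{T})$ to $\ell^\infty(\mathbb{Z})$. Both measure spaces ($\mathbb{T}$ with normalized Lebesgue measure, $\mathbb{Z}$ with counting measure) are $\sigma$-finite, so Riesz--Thorin interpolation between $(L^1\to\ell^\infty)$ and $(L^2\to\ell^2)$ yields, for every $\theta\in[0,1]$, an operator of norm at most one from $L^{q_\theta}(\mathbb{T})$ to $\ell^{q_\theta'}(\mathbb{Z})$ with $\tfrac{1}{q_\theta}=(1-\theta)+\tfrac{\theta}{2}$ and $\tfrac{1}{q_\theta'}=\tfrac{\theta}{2}$; these satisfy $\tfrac{1}{q_\theta}+\tfrac{1}{q_\theta'}=1$ while $q_\theta$ sweeps out $[1,2]$ as $\theta$ runs over $[0,1]$. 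Taking $q_\theta=q$ and undoing the identification $c_n(\hat f)=p_n$ gives $\|f\|_p\le\|\hat f\|_q$, the endpoint cases $p=\infty$ and $p=2$ being exactly the two bounds just used.

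I do not expect a genuine obstacle, as this is a classical statement; the only care needed is bookkeeping. One must keep the \emph{normalized} measure $\tfrac{1}{2\pi}\,dt$ on $\mathbb{T}$ at both endpoints, so that interpolation produces operator norm exactly one rather than a stray power of $2\pi$; and one should record the (immediate) point that $\hat f$ lies in every $L^q(\mathbb{T})$ because it is bounded on a finite measure space, which is what makes the ``coefficient'' formulation meaningful for an arbitrary $\mathbb{Z}$-valued $X$. A reader preferring not to reproduce the interpolation step may instead simply cite the Hausdorff--Young theorem for the torus from any standard reference on Fourier analysis and invoke it with $g=\hat f$.
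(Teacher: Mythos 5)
Your argument is correct and coincides with the paper's own proof, which likewise cites the endpoint identities $\|f\|_2=\|\hat f\|_2$ and $\|f\|_\infty\le\|\hat f\|_1$ and invokes Riesz--Thorin interpolation. You have merely spelled out the bookkeeping (normalized measure on $\mathbb{T}$, viewing $p_n$ as Fourier coefficients of $\hat f$) that the paper leaves implicit.
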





\begin{remark}
We observe that, in contrast with the continuous setting, the inequality $\| f \|_p \leq \| \hat{f} \|_q$ is sharp for random variables on $\mathbb{Z}$. To see this it is enough to consider a Dirac mass at zero ($f(0)=1$ and $f(n)=0$ for all $n \neq 0$ for which $\hat{f}\equiv 1$ so that  $\| f \|_p = \| \hat{f} \|_p = 1$ for all $p$).
\end{remark}

\begin{proof}
The inequality follows by the Riesz-Thorin interpolation Theorem since $\|f\|_2 = \|\hat{f}\|_2$ and $\|f\|_\infty \leq \|\hat{f}\|_1$.
\end{proof}

\begin{thm} \label{thm: Renyi entropy by variance}
When $Y$ is a Bernoulli sum with variance $\sigma^2$ and  $\alpha \in [2,\infty]$ then
\[
    H_\alpha (Y) \geq \log \left[\frac{\sqrt{6 \sigma^2 \alpha'}}{\int_0^{\sqrt{6 \sigma^2 \alpha' }} e^{-t^2/2} dt} \right].
\]
In particular,
\[
    H_\alpha(Y) \geq \frac{1}{2} 
    \max \left\{ \log \left( 1 + 2 \alpha' \sigma^2 \right), \log \left( \frac{12 \alpha' \sigma^2}{\pi} \right) \right\}.
\]
\end{thm}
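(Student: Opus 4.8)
The plan is to combine the Hausdorff--Young inequality (stated above) with the Fourier bound of Theorem \ref{thm: Bernoulli sum lp fourier bound}. First note that a Poisson--Binomial variable is a Bernoulli sum (a constant sequence of Poisson--Binomials converging to itself), so Theorem \ref{thm: Bernoulli sum lp fourier bound} applies to $Y$ with its variance $\sigma^2$. For the main comparison, fix $\alpha \in [2,\infty)$ and set $q = \alpha' = \alpha/(\alpha-1) \in (1,2]$, so that $p=\alpha$ and $q=\alpha'$ are conjugate. Hausdorff--Young gives $\|f_Y\|_\alpha \leq \|\hat f_Y\|_{\alpha'}$, and since $H_\alpha(Y) = -\alpha'\log\|f_Y\|_\alpha$ with $-\alpha' < 0$, the map $t\mapsto -\alpha'\log t$ is decreasing, whence
\[
    H_\alpha(Y) \geq -\alpha'\log\|\hat f_Y\|_{\alpha'} = -\log\|\hat f_Y\|_{\alpha'}^{\alpha'}.
\]
(The case $\alpha=\infty$, $\alpha'=1$ is the same: Hausdorff--Young with $(p,q)=(\infty,1)$ gives $\|f_Y\|_\infty \leq \|\hat f_Y\|_1$, hence $H_\infty(Y) = -\log\|f_Y\|_\infty \geq -\log\|\hat f_Y\|_1$.)

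Next I would insert the characteristic-function bound. Applying Theorem \ref{thm: Bernoulli sum lp fourier bound} with $q = \alpha' \geq 1$,
\[
    \|\hat f_Y\|_{\alpha'}^{\alpha'} \leq \frac{1}{\sqrt{6\sigma^2\alpha'}}\int_0^{\sqrt{6\sigma^2\alpha'}} e^{-t^2/2}\,dt,
\]
so taking $-\log$ and combining with the previous display yields
\[
    H_\alpha(Y) \geq \log\frac{\sqrt{6\sigma^2\alpha'}}{\int_0^{\sqrt{6\sigma^2\alpha'}} e^{-t^2/2}\,dt},
\]
which is the first assertion. (When $\sigma^2 = 0$ this is read via the limit $z^{-1}\int_0^z e^{-t^2/2}\,dt \to 1$ as $z\downarrow 0$, consistent with $H_\alpha(Y) = 0$.)

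For the explicit bounds, put $z = \sqrt{6\sigma^2\alpha'}$, so that $z^2/3 = 2\alpha'\sigma^2$ and $2z^2 = 12\alpha'\sigma^2$; then Lemma \ref{lem: Gaussian integral bound} gives
\[
    \frac{1}{z}\int_0^z e^{-t^2/2}\,dt \leq \min\left\{\frac{1}{\sqrt{1+2\alpha'\sigma^2}},\ \sqrt{\frac{\pi}{12\alpha'\sigma^2}}\right\},
\]
and taking $-\log$ (i.e.\ the $\log$ of the reciprocal of the minimum, which is the $\log$ of the corresponding maximum) produces
\[
    H_\alpha(Y) \geq \frac12\max\left\{\log\!\left(1+2\alpha'\sigma^2\right),\ \log\!\left(\frac{12\alpha'\sigma^2}{\pi}\right)\right\}.
\]

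I expect no serious obstacle here: the proof is essentially the assembly of Hausdorff--Young, Theorem \ref{thm: Bernoulli sum lp fourier bound}, and Lemma \ref{lem: Gaussian integral bound}. The only points needing care are bookkeeping of the sign of $1-\alpha$ (so the monotone direction of $t\mapsto -\alpha'\log t$ is used correctly), checking that the normalization of $\|\hat f_Y\|_q$ in Hausdorff--Young is the same $\tfrac{1}{2\pi}\int_{-\pi}^\pi|\cdot|^q$ appearing in Theorem \ref{thm: Bernoulli sum lp fourier bound}, confirming $\alpha' \in [1,2]$ so that that theorem applies, and treating the degenerate case $\sigma^2 = 0$.
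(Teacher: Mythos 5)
Your proof is correct and follows exactly the same route as the paper: Hausdorff--Young to pass from $\|f_Y\|_\alpha$ to $\|\hat f_Y\|_{\alpha'}$, then Theorem \ref{thm: Bernoulli sum lp fourier bound} with $q=\alpha'$, then Lemma \ref{lem: Gaussian integral bound} with $z=\sqrt{6\sigma^2\alpha'}$. The only additions relative to the paper's argument are the (harmless) explicit remarks about the $\alpha=\infty$ case, the range $\alpha'\in[1,2]$, and the degenerate case $\sigma^2=0$.
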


\begin{remark}
{
Notice that $H_\alpha(Y) \geq \frac{1}{2} \log \left( 1 + 2 \alpha' \sigma^2 \right)$ reads 
\begin{equation} \label{eq:PB}
\Delta_\alpha(Y) \geq 2 \alpha' \Var(Y) .
\end{equation}
Note that if $Y_p$ is a Bernoulli random variable with parameter $p$, Taylor expansion gives 
\begin{align*}
    \Delta_\alpha(Y_p) = 2 \alpha' p + o(p).
\end{align*}
Thus the constant $2 \alpha'$ is optimal in \eqref{eq:PB}.}
\end{remark}

\begin{proof}
Recall that $H_\alpha (Y)=-\alpha'\log \| f_Y \|_\alpha=-\log \| f_Y \|_\alpha^{\alpha'}$. Therefore, the Hausdorff-Young Inequality  $\| f_{Y} \|_\alpha^{\alpha'} \leq \| \hat{f}_{Y} \|_\alpha^{\alpha'}$ together with Theorem \ref{thm: Bernoulli sum lp fourier bound} guarantee that
\begin{align*}
    H_\alpha(Y) 
        \geq 
            - \log \| \hat{f}_Y \|_{\alpha'}^{\alpha'}
        \geq
           \log  \left[\frac{\sqrt{6 \sigma^2 \alpha'}} {\int_0^{\sqrt{6 \sigma^2 \alpha'}} e^{-t^2/2} dt}\right] .
\end{align*}
The last bound follows from Lemma \ref{lem: Gaussian integral bound}
applied with $z=\sqrt{6\sigma^2 \alpha'}$.
\end{proof}

Our last ingredient in the proof of Theorem \ref{thm: Bernoulli REPI} is the following lemma.

\begin{lem} \label{lem: Delta versus variance}
For $\alpha \geq 2$, {if $X$ is a Bernoulli random variable, then}
\begin{align} \label{eq: Delta versus variance}
    \Delta_\alpha(X) \leq 12  \Var(X),
\end{align}
with equality when the Bernoulli parameter $\theta = \frac 1 2$.
\end{lem}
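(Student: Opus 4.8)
The plan is to reduce the statement to the single case $\alpha = 2$ and then verify an elementary algebraic inequality. The starting point is the observation (recorded in the Remark after the definition of $\Delta_\alpha$) that $\Delta_\alpha(X) = 2^{2H_\alpha(X)} - 1$ with $H_\alpha$ taken in base $2$. Since $\alpha \mapsto H_\alpha(X)$ is non-increasing on $(0,\infty)$ — a classical fact about R\'enyi entropy — the map $\alpha \mapsto \Delta_\alpha(X)$ is non-increasing as well, so $\Delta_\alpha(X) \leq \Delta_2(X)$ for every $\alpha \geq 2$, and it suffices to prove $\Delta_2(X) \leq 12\,\Var(X)$. For the equality clause: if $\theta = \tfrac12$ then $\sum_n p_n^\alpha = 2^{1-\alpha}$, so $\Delta_\alpha(X) = (2^{1-\alpha})^{2/(1-\alpha)} - 1 = 4-1 = 3 = 12\cdot\tfrac14 = 12\,\Var(X)$ for \emph{every} $\alpha \geq 2$; in particular equality holds at $\alpha=2$ and is inherited for all $\alpha\ge 2$.

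Next I would carry out the computation at $\alpha = 2$. Writing $\mathbb{P}(X=1) = \theta = 1-\mathbb{P}(X=0)$ and setting $v \coloneqq \Var(X) = \theta(1-\theta) \in [0,\tfrac14]$, one has $\sum_n p_n^2 = \theta^2 + (1-\theta)^2 = 1 - 2v$, and hence $\Delta_2(X) = (1-2v)^{-2} - 1$. Thus $\Delta_2(X) \leq 12v$ is equivalent to $(1+12v)(1-2v)^2 \geq 1$; expanding the left side as $1 + 8v - 44v^2 + 48v^3$, the inequality becomes $4v\,(12v^2 - 11v + 2) \geq 0$. Since $12v^2 - 11v + 2 = 12\,(v-\tfrac14)(v-\tfrac23)$, both linear factors are non-positive on $[0,\tfrac14]$, so the product is non-negative there, with equality exactly at $v = \tfrac14$, i.e. $\theta = \tfrac12$. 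This closes the argument.

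The only substantive ingredient beyond bookkeeping is the monotonicity of $H_\alpha$ in $\alpha$, which collapses the whole range $\alpha \geq 2$ to $\alpha = 2$; after that the proof is a one-line polynomial sign check, so I do not expect any genuine obstacle. Should one prefer to avoid invoking that monotonicity, one could attack the inequality for general $\alpha$ directly by raising both sides of $(\theta^\alpha + (1-\theta)^\alpha)^{2/(1-\alpha)} \leq 1 + 12\theta(1-\theta)$ to the negative power $\tfrac{1-\alpha}{2}$ and lower-bounding $\theta^\alpha + (1-\theta)^\alpha$ in terms of $\theta(1-\theta)$; but this is strictly more work and less transparent, so the reduction to $\alpha=2$ is the route I would take.
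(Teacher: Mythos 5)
Your proof is correct and follows essentially the same route as the paper's: reduce to $\alpha=2$ via monotonicity of R\'enyi entropy, then verify a cubic polynomial inequality. Your parameterization by $v=\theta(1-\theta)$ and factorization $4v(12v^2-11v+2)=4v(1-4v)(2-3v)$ is exactly the paper's factorization $-4(t-1)t(2t-1)^2(3t^2-3t+2)$ after the substitution $v=t(1-t)$.
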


\begin{remark}
Note that if $\theta = 1/2$, $\Delta_\alpha(X) = 12  \Var(X)$ independent of $\alpha$ and cannot be improved for any R\'enyi parameter $\alpha \geq 2$.  Considering the Shannon entropy for small $\theta$ shows an analogous result fails for all $\alpha \leq 1$,  as $ \lim_{\theta \to 0} \frac{\Delta_1(X)}{ \Var(X)} = \infty$.  It can be shown for $\alpha \in (1,2)$, there exists $C(\alpha)$ such that $\Delta_\alpha(X) \leq C(\alpha)  \Var(X)$.  However by investigation about $\theta$ close to $0$, $C(\alpha)$ is necessarily larger than $12$ for $\alpha < \frac 6 5$.
\end{remark}

\begin{proof}[Proof of Lemma \ref{lem: Delta versus variance}]
By the monotonicity of R\'enyi entropy $H_\alpha(X) \leq H_2(X)$, so it suffices to prove $\Delta_2(X) \leq 12  \Var(X)$.  This is equivalent that for $t \in [0,1]$,
\begin{align*}
    (t^2 + (1-t)^2)^{-2} - 1 \leq 12 t(1-t).
\end{align*}
{ This}
is equivalent to proving $P(t) \geq 0$, where $P$ is the polynomial,
\begin{align*}
    P(t) = (12t(1-t)+ 1)(t^2 + (1-t)^2)^2  -1 \geq 0 
\end{align*}
on $[0,1]$.  However, $P$ can be factored, to
\begin{align*}
     4(1-t) t (2t-1)^2(3t^2 - 3t+2),
\end{align*}
from which the non-negativity of $P$ on the interval is obvious.
\end{proof}


The main theorem is the following.

\begin{thm} \label{thm: Bernoulli REPI}
For $\alpha \geq 2$,  $X_i$ a Bernoulli sum, then
\begin{align}\label{eq: Bernoulli REPI}
    \Delta_\alpha(X_1 + \cdots + X_n) \geq \frac{ \alpha'}{6} \sum_{i=1}^n \Delta_\alpha(X_i),
\end{align}
independent of the number of summands. 
\end{thm}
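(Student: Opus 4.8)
The strategy is a two-sided comparison with the variance: lower-bound $\Delta_\alpha(X_1+\cdots+X_n)$ using Theorem~\ref{thm: Renyi entropy by variance}, upper-bound each $\Delta_\alpha(X_i)$ using (an extension of) Lemma~\ref{lem: Delta versus variance}, and exploit that Poisson-Binomials are closed under independent summation. First I would observe that $S:=X_1+\cdots+X_n$ is again a Poisson-Binomial: writing each $X_i=\sum_{j=1}^{m_i}B_{i,j}$ with the $B_{i,j}$ independent Bernoulli gives $S=\sum_{i,j}B_{i,j}$, and by independence $\Var(S)=\sum_{i=1}^n\Var(X_i)$. Theorem~\ref{thm: Renyi entropy by variance} applied to the Poisson-Binomial $S$ yields $H_\alpha(S)\ge\tfrac12\log(1+2\alpha'\Var(S))$, i.e.\ $\Delta_\alpha(S)\ge 2\alpha'\Var(S)$ (the $\Delta$-form of the Remark following that theorem). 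Hence, once we know that each Poisson-Binomial summand satisfies $\Delta_\alpha(X_i)\le 12\,\Var(X_i)$, the two bounds chain together:
\begin{align*}
    \Delta_\alpha(S)\ \ge\ 2\alpha'\sum_{i=1}^n\Var(X_i)\ \ge\ \frac{2\alpha'}{12}\sum_{i=1}^n\Delta_\alpha(X_i)\ =\ \frac{\alpha'}{6}\sum_{i=1}^n\Delta_\alpha(X_i),
\end{align*}
with no dependence on $n$, which is the assertion.

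The real content is therefore the inequality $\Delta_\alpha(X_i)\le 12\,\Var(X_i)$ for a Poisson-Binomial: Lemma~\ref{lem: Delta versus variance} only supplies it for a single Bernoulli, and it must be promoted. Since the R\'enyi entropy is non-increasing in its order, $\Delta_\alpha(X_i)\le\Delta_2(X_i)=\|\hat{f}_{X_i}\|_2^{-4}-1$, so it suffices to lower-bound $\|\hat{f}_{X_i}\|_2^2=\sum_n f_{X_i}(n)^2$ for a Poisson-Binomial of prescribed variance. The natural device is the convex-combination representation $|\hat{f}_{X_i}(t)|^2=\prod_j\bigl((1-\lambda_{i,j})+\lambda_{i,j}\cos t\bigr)$ from Section~\ref{sec: Bernoulli Sums} (with $\lambda_{i,j}=2\Var(B_{i,j})$), bounded below by elementary means — for instance using $\prod_j(1-x_j)\ge 1-\sum_j x_j$ with $x_j=\lambda_{i,j}(1-\cos t)$ — and then integrated; in the regime of large $\Var(X_i)$, where such a crude estimate is lossy, one instead applies the sharper branch $\Delta_\alpha(S)\ge\tfrac{12\alpha'}{\pi}\Var(S)-1$ of Theorem~\ref{thm: Renyi entropy by variance} (coming from the second term in Lemma~\ref{lem: Gaussian integral bound}) on the left side to reabsorb the loss.

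I expect this promotion of Lemma~\ref{lem: Delta versus variance} from Bernoullis to general Poisson-Binomials, made uniform across all variance regimes so that it stays compatible with the target constant $\alpha'/6$, to be the main obstacle; once it is in hand the remainder is exactly the bookkeeping displayed above, and the whole argument is manifestly independent of the number $n$ of summands.
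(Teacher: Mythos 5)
Your main chain of inequalities is exactly the paper's proof: apply Theorem~\ref{thm: Renyi entropy by variance} to get $\Delta_\alpha(S)\ge 2\alpha'\Var(S)$, use additivity of variance, then use $\Var(X_i)\ge\Delta_\alpha(X_i)/12$ from Lemma~\ref{lem: Delta versus variance}. Where you depart from the paper is in the last step: you (correctly) notice that Lemma~\ref{lem: Delta versus variance} is stated only for a single Bernoulli, whereas the theorem's statement says ``$X_i$ Poisson-Binomial,'' and you treat the ``promotion'' of the lemma to Poisson-Binomial summands as the real content to be supplied.

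That promotion, however, cannot be carried out, because the inequality $\Delta_\alpha(X)\le 12\Var(X)$ is simply false for general Poisson-Binomial $X$ when $\alpha$ is near $2$. Already for $X=\mathrm{Bin}(2,1/2)$ one has $\sum_n p_n^2 = 1/16+1/4+1/16=3/8$, so $\Delta_2(X)=(3/8)^{-2}-1=55/9$, while $12\Var(X)=6=54/9$; thus $\Delta_2(X)>12\Var(X)$. Worse, along the central binomials $\mathrm{Bin}(m,1/2)$ the ratio $\Delta_2(X)/\Var(X)\to 4\pi\approx 12.57$, so no amount of sharpening of your Fourier estimate for $\|\hat f_X\|_2$ can recover the constant $12$. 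The paper itself does not attempt any promotion: it applies Lemma~\ref{lem: Delta versus variance} to the $X_i$ directly, which is only legitimate if each $X_i$ is a single Bernoulli. The self-consistent reading (and the one the paper's proof implements) is that the $X_i$ are Bernoulli, so that $X_1+\cdots+X_n$ is the Poisson-Binomial whose entropy is being bounded; under that reading your displayed chain is the complete proof and the obstacle you spend the second half of your proposal on does not arise. Your fallback idea of invoking the sharper branch $\Delta_\alpha(S)\ge\tfrac{12\alpha'}{\pi}\Var(S)-1$ when the summands have large variance is a sensible repair strategy if one truly wants Poisson-Binomial summands, but as written it is only a sketch, it would need to be made uniform across all variance profiles (including mixed ones), and it is not what the paper does.
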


\begin{remark}
 Note that 
 $
 c(\alpha) := \frac{\alpha}{6(\alpha-1)} = \frac {\alpha'} 6\geq \frac 1 6.
 $
 Further, inequality \eqref{eq: Bernoulli REPI} fails for the Shannon entropy, as can be seen by considering $X_i$ to be iid with parameter $\theta$.  Indeed, the sum $X_1 + \cdots + X_n$ has a {binomial} distribution, whose entropy has the well known asymptotic formula,
\begin{align*}
    H(X_1 + \cdots + X_n) = \frac 1 2 \log_2( 2 \pi e n \theta (1-\theta)) + O(1/n).
\end{align*}
Therefore,
\begin{align*}
    \frac{\Delta_1( \sum_i X_i)}{\sum_i \Delta_1(X_i)} 
        = 
            \frac{ 2^{2 (H(X_1 + \cdots+ X_n) + O(1/n))}-1}{ n \Delta_1(X_1) }
        =
             \frac{ 2 \pi e \theta(1-\theta) 2^{O(1/n)} - \frac 1 n}{\Delta_1(X_1)} 
        \to 0,
\end{align*}
with $\theta \to 0$.
This precludes a summand independent entropy power inequality in the sense of Theorem \ref{thm: Bernoulli REPI} for the Shannon entropy.
\end{remark}

\begin{proof}[Proof of Theorem \ref{thm: Bernoulli REPI}]
By Theorem \ref{thm: Renyi entropy by variance},  it holds
\begin{align*}
    \Delta_\alpha\left(\sum_i X_i\right) 
        =
            e^{2 H_\alpha(\sum_i X_i)} - 1
        \geq
            2 \alpha' \Var \left( \sum_i X_i\right).
\end{align*}
Using that additivity of the variance of independent variables and Lemma \ref{lem: Delta versus variance}, we conclude that
\begin{align*}
    2\alpha' \Var \left( \sum_i X_i \right) 
        =
          2\alpha' \sum_i  \Var(X_i)
        \geq 
            2\alpha' \sum_i \frac{\Delta_\alpha(X_i)}{12}
        =
            \frac{\alpha'}{6} \sum_i \Delta_\alpha(X_i).
\end{align*}
\end{proof}

\section{Min-Entropy Power} \label{sec: min-EPI}
Given independent integer-valued random variables $X_1, \dots, X_n$.  We investigate minimizers of the quantity
$
    H_\infty(X_1 + \cdots + X_n)
$
on the set $H_\infty(X_j) \geq \log C_j$ where $C_j > 1$.  We note that $H_\infty(X) \geq \log C$ corresponds to $\|f\|_\infty \leq \frac 1 C$.

    \subsection{Extreme points}
Let us denote the set of probability density functions supported on a finite set $M$, with density $f$ bounded by $\frac 1 C$ by
\begin{align} \label{eq: set of probability measures}
    \mathcal{P}_{C}(M) = \left\{ f\colon M \to [0,1] \mbox{ such that } 
    0 \leq f \leq \frac 1 C \mbox{ and } \sum_{i \in M} f(i) = 1 \right\} .
\end{align}
Note that $\mathcal{P}_{C}(M)$ is a convex compact subset\footnote{We assume $C \leq M$ else $\mathcal{P}_{C}(M)=\emptyset$.} of $\mathbb{R}^{|M|}$, and hence is the closure of the convex hull of its extreme points.   Let us recall the necessary definitions.  The extreme points $\mathcal{E}(K)$ associated to a convex set $K$ are defined as
\[
    \mathcal{E}(K) = \left\{ k \in K : k = \frac{k_1 + k_2}{2} \hbox{ for } k_i \in K \hbox{ implies } k_1 = k_2 \right\}.
\]
The convex hull of a set $K$ is
\[
    co(K) = \left\{ x: \exists \lambda_i >0 \hbox{ and } k_i \in K \hbox{ such that } \sum_{i=1}^n \lambda_i =1, \sum_{i=1}^n \lambda_i k_i = x \right\} .
\]
For $x \in \mathbb{R}$, we write $\floor*{x} = \max \{ n \in \mathbb{Z}: n \leq x \}$ for the entire part of $x$.  

\begin{thm}
For $\mathcal{P}_{C}(M)$ defined as in \eqref{eq: set of probability measures} with $C \leq |M|$,
\[
    \mathcal{E}( \mathcal{P}_{C}(M) ) = \left\{ f: f = \frac{\mathbbm{1}_A} C + \left(1-\frac{\floor{C}}{C} \right) \mathbbm{1}_{\{x\}}, |A| = \floor{C}, x \notin A \right\}.
\]
\end{thm}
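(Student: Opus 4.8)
The plan is to read the structure of an extreme point off the level sets of $f$, and to verify the converse by a ``no room to move'' argument. Throughout, for $f\in\mathcal{P}_C(M)$ set $A(f)=\{i\in M: f(i)=1/C\}$ (the coordinates where the upper bound is saturated), $Z(f)=\{i\in M: f(i)=0\}$, and $B(f)=M\setminus(A(f)\cup Z(f))=\{i: 0<f(i)<1/C\}$, the set of ``free'' coordinates. One may assume $C<|M|$: if $C=|M|$ then $\sum_{i\in M}f(i)=1$ with $|M|$ summands each $\le 1/|M|$ forces $f=\mathbbm{1}_M/|M|$, the unique point of $\mathcal{P}_C(M)$, and the statement is trivial.

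The key step is a perturbation lemma: if $f\in\mathcal{P}_C(M)$ has two distinct points $i,j\in B(f)$, then $f$ is not extreme. Indeed, for $\varepsilon\in\bigl(0,\min\{f(i),f(j),\tfrac1C-f(i),\tfrac1C-f(j)\}\bigr)$ the functions $f^{\pm}=f\pm\varepsilon(\mathbbm{1}_{\{i\}}-\mathbbm{1}_{\{j\}})$ both lie in $\mathcal{P}_C(M)$ (the normalization $\sum=1$ is preserved because the two perturbations cancel, and $0\le f^\pm\le 1/C$ by the choice of $\varepsilon$), they are distinct, and $f=\tfrac12(f^++f^-)$. Hence, if $f$ is extreme then $|B(f)|\le 1$, and a short arithmetic case analysis concludes. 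If $|B(f)|=0$, then $f=\mathbbm{1}_{A(f)}/C$ and $\sum f=1$ gives $|A(f)|=C$, so $C\in\mathbb{Z}$, $\lfloor C\rfloor=C$, and $f=\mathbbm{1}_{A(f)}/C+(1-\lfloor C\rfloor/C)\mathbbm{1}_{\{x\}}$ for any $x\notin A(f)$ (which exists since $C<|M|$). If $B(f)=\{x\}$, then $f=\mathbbm{1}_{A(f)}/C+f(x)\mathbbm{1}_{\{x\}}$, and $\sum f=1$ forces $f(x)=1-|A(f)|/C$; the constraints $0<f(x)<1/C$ then read $C-1<|A(f)|<C$, which forces $|A(f)|=\lfloor C\rfloor$ and $f(x)=1-\lfloor C\rfloor/C$. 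In both cases $f$ has exactly the asserted form, with $x\notin A(f)$ automatic since $B(f)\cap A(f)=\emptyset$.

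For the converse, let $f=\mathbbm{1}_A/C+(1-\lfloor C\rfloor/C)\mathbbm{1}_{\{x\}}$ with $|A|=\lfloor C\rfloor$ and $x\notin A$, and suppose $f=\tfrac12(g+h)$ with $g,h\in\mathcal{P}_C(M)$. On $A$ we have $f\equiv 1/C$, the maximal allowed value, so $g=h=1/C$ there; on $Z:=M\setminus(A\cup\{x\})$ we have $f\equiv 0$, the minimal allowed value, so $g=h=0$ there. If $C\notin\mathbb{Z}$, the remaining coordinate is $x$, and $\sum_M g=\sum_M h=1$ forces $g(x)=h(x)=1-\lfloor C\rfloor/C$; if $C\in\mathbb{Z}$ then $x\in Z$ and we are already done. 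Thus $g=h$, so $f$ is extreme.

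I expect no genuine obstacle here: all the content is in the one-line perturbation argument, and the only thing requiring care is the bookkeeping around whether $C$ is an integer (which governs whether the designated point $x$ carries the residual mass $1-\lfloor C\rfloor/C$ or is a true zero of $f$) and the degenerate endpoint $C=|M|$.
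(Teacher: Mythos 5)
Your proof is correct and takes essentially the same two-step approach as the paper: a perturbation argument showing an extreme point has at most one coordinate strictly between $0$ and $1/C$, followed by the observation that in any midpoint decomposition the endpoint-valued coordinates are pinned. You are somewhat more careful than the paper in deriving the explicit form from the ``at most one free coordinate'' property (and in handling the edge case $C=|M|$); the only slip is notational: in the converse, when $C\in\mathbb{Z}$ you write ``$x\in Z$,'' which contradicts your own definition $Z=M\setminus(A\cup\{x\})$, though the intended point — that $f(x)=0$ in that case, so the same pinning argument applies at $x$ — is clear and correct.
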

Note that when $C$ is chosen to be {a natural} number, $\mathcal{E}( \mathcal{P}_{C}(M) )$ is the uniform distributions on sets of size $C$ contained in $M$, else the extremal distributions are ``nearly uniform'' representing an appropriately scaled convex combination of a uniform distribution on a set of size $\floor{C}$ and a disjoint point mass.  When $1 < C \leq 2$, the extreme points of $\mathcal{P}_C(M)$ are probability mass functions supported on exactly two points.  A more general proof of this result is given in \cite{MMX17:2}.  As we will not have use for the generality, we provide a simpler proof of this result and others of this subsection in the appendix \cite{BernoulliAppendix} to allow this article to be more self-contained.


\begin{thm} \label{thm: Extreme point reduction}
    Let $m$ be a natural number and recall that $\llbracket  m\rrbracket = \{0,1,\dots,m\}$.  
    For $\alpha \in [0,\infty]$, a natural number $n$,  constants $C_1,\dots,C_n \leq m+1$, independent random variables $X_i$ with probability mass functions $f_{X_i} \in \mathcal{P}_{C_i}(\llbracket  m\rrbracket)$, it holds
    \begin{align} \label{eq: Extreme point reduction}
        H_\alpha(X_1 + \cdots + X_n) \geq \min_{Z \in \mathcal{E}} H_\alpha(Z_1 + \cdots + Z_n) ,
    \end{align}
    where $\mathcal{E}$ is the collection of all $Z = (Z_1, \dots, Z_n)$ such that $Z_i$ are independent variables with density $f_{Z_i} \in \mathcal{E}(\mathcal{P}_{C_i}( \llbracket m \rrbracket) )$. 
\end{thm}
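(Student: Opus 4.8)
The plan is to minimize over the summands one at a time, reducing to a one-variable statement about convex/concave functions on a polytope, and then to recover the boundary exponents $\alpha\in\{0,1,\infty\}$ by continuity in $\alpha$.

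First I would fix an index $i$ and the densities $f_{X_j}$ for $j\neq i$, and set $g$ equal to the density of $\sum_{j\neq i}X_j$: a fixed nonnegative function supported on the finite set $\llbracket(n-1)m\rrbracket$. Then $f_{X_1+\cdots+X_n}=f_{X_i}\ast g$ is an \emph{affine} function of $f_{X_i}$, and everything in sight is supported on the finite set $\llbracket nm\rrbracket$. The core claim is that, as $f$ ranges over the compact convex polytope $\mathcal{P}_{C_i}(\llbracket m\rrbracket)$, the function $f\mapsto H_\alpha(f\ast g)$ attains its minimum at an extreme point. For $\alpha\in(0,1)\cup(1,\infty)$ this follows by examining $\psi(f):=\sum_x\bigl(\sum_y f(y)g(x-y)\bigr)^{\alpha}$, a finite sum of continuous functions of $f$ with $\psi(f)>0$: since $t\mapsto t^{\alpha}$ is concave for $\alpha<1$ and convex for $\alpha>1$ while $f\mapsto(f\ast g)(x)$ is linear, $\psi$ is concave on $\mathcal{P}_{C_i}(\llbracket m\rrbracket)$ when $\alpha\in(0,1)$ and convex when $\alpha>1$. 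As $H_\alpha(f\ast g)=\tfrac{1}{1-\alpha}\log\psi(f)$, minimizing $H_\alpha(f\ast g)$ amounts to minimizing the concave function $\psi$ when $\alpha\in(0,1)$ (positive prefactor) and to maximizing the convex function $\psi$ when $\alpha>1$ (negative prefactor); either way the extremum over the polytope is attained at an extreme point, because every point of a compact convex subset of $\mathbb{R}^{m+1}$ is a convex combination of its finitely many extreme points.

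I would then iterate: apply this with $i=1$ to replace $f_{X_1}$ by a density $f_{Z_1}\in\mathcal{E}(\mathcal{P}_{C_1}(\llbracket m\rrbracket))$ without increasing $H_\alpha(X_1+\cdots+X_n)$, then with $i=2$ holding $f_{Z_1}$ and the original $f_{X_3},\dots,f_{X_n}$ fixed, and so on; after $n$ steps one reaches a tuple $Z\in\mathcal{E}$ with $H_\alpha(Z_1+\cdots+Z_n)\le H_\alpha(X_1+\cdots+X_n)$, which is \eqref{eq: Extreme point reduction} for $\alpha\in(0,1)\cup(1,\infty)$. For $\alpha\in\{0,1,\infty\}$ I would pass to the limit: for any fixed tuple of densities, all supported on the finite set $\llbracket nm\rrbracket$, the map $\alpha\mapsto H_\alpha(X_1+\cdots+X_n)$ is continuous on $[0,\infty]$ by Definition \ref{defn: Renyi entropy}, and since $\mathcal{E}(\mathcal{P}_{C_i}(\llbracket m\rrbracket))$ is finite, $\alpha\mapsto\min_{Z\in\mathcal{E}}H_\alpha(Z_1+\cdots+Z_n)$ is continuous too, so letting $\alpha\to0^+$, $\alpha\to1$, $\alpha\to\infty$ in the inequality already proved finishes the job. (One can also treat these three cases directly: Shannon entropy is concave in the density, $\|f\ast g\|_\infty$ is convex in $f$ as a supremum of affine functions, and $|\supp(f\ast g)|=|\supp f+\supp g|$ shrinks when $\supp f$ does.)

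I do not anticipate a real obstacle; the one point requiring care is the direction of optimization. Because $\tfrac{1}{1-\alpha}$ changes sign at $\alpha=1$, ``minimize $H_\alpha$'' becomes ``maximize $\|f\ast g\|_\alpha^{\alpha}$'' for $\alpha>1$ and ``minimize $\|f\ast g\|_\alpha^{\alpha}$'' for $\alpha<1$, and it is exactly this sign change that keeps the extremal principle applicable in both regimes; a minor secondary point is that the hypothesis $C_i\le m+1=|\llbracket m\rrbracket|$ is what guarantees $\mathcal{P}_{C_i}(\llbracket m\rrbracket)$ is a nonempty compact polytope so that minimizers exist.
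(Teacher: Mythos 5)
Your proof is correct and follows essentially the same route as the paper: a coordinate-by-coordinate reduction over the convolution, exploiting the convexity/concavity structure of $\|\cdot\|_\alpha$ together with Krein--Milman to move each summand's density to an extreme point of $\mathcal{P}_{C_i}(\llbracket m\rrbracket)$. The paper phrases the extremal step through quasi-concavity of $f\mapsto H_\alpha(f*g)$ directly, whereas you work with the concavity (for $\alpha<1$) or convexity (for $\alpha>1$) of $\psi(f)=\|f*g\|_\alpha^\alpha$ and track the sign of $\tfrac{1}{1-\alpha}$, and you handle the endpoints $\alpha\in\{0,1,\infty\}$ by an explicit continuity-in-$\alpha$ argument, which is a tidy completion of a point the paper leaves implicit.
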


\begin{remark}
We stress that the minimum in the right hand side of \eqref{eq: Extreme point reduction} is indeed a minimum and therefore is achieved.
\end{remark}

We restate the case that $\alpha = \infty$ below.

\begin{cor} \label{cor: min-entropy reduction}
For $X_1, \dots, X_n$ independent random variables taking values in a finite set $M$ such that $H_\infty(X_i) \geq \log C_i$, there exists $U_1, \dots, U_n$ independent such that $f_{U_i} \in \mathcal{E}(\mathcal{P}_{C_i}(M))$ and
\[
    H_\infty(X_1 + \cdots + X_n) \geq H_\infty( U_1 + \cdots + U_n).
\]
\end{cor}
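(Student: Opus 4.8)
The plan is to read off Corollary~\ref{cor: min-entropy reduction} as the $\alpha = \infty$ instance of Theorem~\ref{thm: Extreme point reduction}, the only two points needing attention being that the ground set $M$ is now an arbitrary finite set rather than an interval $\llbracket m \rrbracket$, and that we want the extremal summands $U_i$ to lie in $\mathcal{E}(\mathcal{P}_{C_i}(M))$ itself rather than in the extreme points of some larger polytope. First I would observe that $H_\infty(X_i) \geq \log C_i$ is precisely $\|f_{X_i}\|_\infty \leq 1/C_i$, so that, together with the hypothesis that $X_i$ is supported in $M$, the condition $H_\infty(X_i)\geq \log C_i$ is equivalent to $f_{X_i} \in \mathcal{P}_{C_i}(M)$ (if the $X_i$ have distinct supports one simply enlarges all of them to the common finite set $M$). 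In particular each $\mathcal{P}_{C_i}(M)$ is a nonempty compact convex subset of $\mathbb{R}^{|M|}$, so $C_i \leq |M|$ and the explicit description of $\mathcal{E}(\mathcal{P}_{C_i}(M))$ given earlier is available.

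Next I would re-run the iteration in the proof of Theorem~\ref{thm: Extreme point reduction}, but directly on the polytopes $\mathcal{P}_{C_i}(M)$; nothing in that argument uses that the ground set is an interval, only that it is finite. Concretely, $X_1 + \cdots + X_n$ is supported on the finite $n$-fold sumset of $M$, the map $(g_1,\dots,g_n) \mapsto H_\infty(g_1 * \cdots * g_n) = -\log\|g_1 * \cdots * g_n\|_\infty$ is continuous (its convolution entries are polynomials in the $g_i(k)$ and $\|\cdot\|_\infty$ is continuous), and it is separately quasi-concave in each slot because convolution is coordinate-affine and $g \mapsto \|g\|_\infty$ is convex, so $-\log\|\cdot\|_\infty$ is quasi-concave. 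Fixing $f_{X_2},\dots,f_{X_n}$, the function $g \mapsto H_\infty(g * f_{X_2} * \cdots * f_{X_n})$ is continuous and quasi-concave on the compact convex polytope $\mathcal{P}_{C_1}(M)$, whence by Krein--Milman (the extreme-point set being finite for a polytope, so the minimum is attained) there is $U_1$ with $f_{U_1} \in \mathcal{E}(\mathcal{P}_{C_1}(M))$ and $H_\infty(X_1 + \cdots + X_n) \geq H_\infty(U_1 + X_2 + \cdots + X_n)$. Replacing $X_2$ by some $U_2$ with $f_{U_2}\in\mathcal{E}(\mathcal{P}_{C_2}(M))$ in the same way, then $X_3$, and so on, gives the claim after $n$ steps.

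An equivalent bookkeeping, if one prefers to quote Theorem~\ref{thm: Extreme point reduction} verbatim, is to translate $M$ so that $M \subseteq \llbracket m \rrbracket$ with $m = \max M - \min M$ (translations change no $H_\infty$), and to note that $\mathcal{P}_{C_i}(M)$ is a \emph{face} of $\mathcal{P}_{C_i}(\llbracket m \rrbracket)$: if $f,g \geq 0$ and $(f+g)/2$ vanishes off $M$ then $f,g$ vanish off $M$. Consequently the extreme points produced by the iteration of Theorem~\ref{thm: Extreme point reduction}, being supported in $M$, automatically belong to $\mathcal{E}(\mathcal{P}_{C_i}(M))$. I do not expect a real obstacle here: the analytic content (continuity and quasi-concavity of $f \mapsto H_\infty(f)$ on a finite-support simplex, and Krein--Milman) is exactly as in Theorem~\ref{thm: Extreme point reduction}, and the only genuinely new thing to check is that the support constraint propagates correctly through the coordinate-by-coordinate extremization, which the face observation makes transparent.
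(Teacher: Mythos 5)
Your proposal is correct and takes essentially the same approach as the paper, which gives no separate proof and simply presents the corollary as the $\alpha=\infty$ instance of Theorem~\ref{thm: Extreme point reduction}. Your two reconciliations of the ground-set discrepancy (re-running the iteration directly on $\mathcal{P}_{C_i}(M)$, or noting that $\mathcal{P}_{C_i}(M)$ is a face of $\mathcal{P}_{C_i}(\llbracket m\rrbracket)$ so that extreme points supported in $M$ land automatically in $\mathcal{E}(\mathcal{P}_{C_i}(M))$) are both valid and supply the bookkeeping the paper leaves implicit.
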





\subsection{Rearrangement}

In this section we define the notions of log-concavity and of rearrangement of functions on the integers  $\mathbb{Z}$, to be used later on.

\begin{defn} \label{def:log-concave}
A function $f \colon \mathbb{Z} \to [0,\infty)$ is log-concave when
\[
    f^2(n) \geq f(n+1) f(n-1).
\]
and $f(i) f(j) >0$ for $i < j$ implies $f(k) >0$ for $k \in [i,j]$.
\end{defn}

\begin{defn}
For a function $f: \mathbb{Z} \to [0,\infty)$ with finite support,
\[
    f = \sum_{i=0}^{n} a_i \mathbbm{1}_{\{ x_i\}}
\]
with $x_1 < x_2 < \cdots < x_n$ denote
\[
    f^\# = \sum_{i=0}^n a_i \mathbbm{1}_{\{i\}}.
\]
When $X_i$ are independent random variables with densities $f_i$, we denote by $X_i^{\#}$ a collection of random variables such that $X_i^\#$ has density $f_i^{\#}$.
\end{defn}

\subsection{Integers}

In this section, we will make use of a result due to Madiman-Wang-Woo \cite{MWW19} (see also \cite{MWW21}) that shows that the R\'enyi entropy power is somehow decreasing under rearrangement. More precisely, these authors prove that $f_1* \cdots * f_n$ is majorized by $f_1^\# * \cdots * f_n^\#$. We refer to \cite{MOA11:book} for background on majorization.

The next theorem follows from \cite{MWW19}, by applying the Schur concavity of R\'enyi entropy.

\begin{thm}[Theorem 1.4 \cite{MWW19}] \label{thm: Madiman majorization theorem}
For $\alpha \in [0,\infty]$, and $f_i$ are such that $f_i^\#$ are log-concave then
\[
     H_\alpha(f_1 * \cdots * f_n) \geq H_\alpha(f_1^\# * \cdots * f_n^\#).
\]
\end{thm}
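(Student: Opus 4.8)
The plan is to derive the bound from two facts: the rearrangement (majorization) inequality for convolutions that is the content of \cite{madiman2019majorization}, and the Schur-concavity of the R\'enyi entropy functional on the probability simplex. Recall that a finitely supported probability density $g$ is \emph{majorized} by another one $h$, written $g \prec h$, if the non-increasing rearrangements of their value vectors satisfy $\sum_{i=1}^{k} g_i^{\downarrow} \leq \sum_{i=1}^{k} h_i^{\downarrow}$ for every $k$ (with equality in the limit, both total masses being $1$; one pads with zeros so the two vectors have equal length). The external input I would quote as a black box is exactly the rearrangement theorem of Madiman--Wang--Woo: since each $f_i^\#$ is log-concave, one has
\[
    f_1 * \cdots * f_n \;\prec\; f_1^\# * \cdots * f_n^\# ,
\]
the right-hand side being itself log-concave because convolution preserves log-concavity on $\mathbb{Z}$.

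Next I would record that $f \mapsto H_\alpha(f)$ is Schur-concave, meaning $g \prec h$ implies $H_\alpha(g) \geq H_\alpha(h)$, for every $\alpha \in [0,\infty]$. For $\alpha \in (1,\infty)$ this holds because $t \mapsto t^\alpha$ is convex, so $g \mapsto \sum_x g^\alpha(x)$ is Schur-convex, and post-composing with the decreasing map $s \mapsto (1-\alpha)^{-1}\log s$ reverses the inequality; for $\alpha \in (0,1)$ the identical argument applies with $t \mapsto t^\alpha$ concave and $s \mapsto (1-\alpha)^{-1}\log s$ increasing. The three boundary values follow from the defining limits: $H_1$ is the Shannon entropy, Schur-concave by concavity of $t \mapsto -t\log t$; $H_\infty(g) = -\log\|g\|_\infty$ is Schur-concave because $g \mapsto \max_x g(x)$ is a symmetric convex, hence Schur-convex, function of $g$; and $H_0(g) = \log\|g\|_0$ is Schur-concave since $\|g\|_0 = \lim_{\alpha \downarrow 0}\sum_x g^\alpha(x)$ is a pointwise limit of Schur-concave functions. (Equivalently, the boundary cases can be obtained from the continuity of $\alpha \mapsto H_\alpha$.)

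Combining the two, Schur-concavity applied to the displayed majorization gives $H_\alpha(f_1 * \cdots * f_n) \geq H_\alpha(f_1^\# * \cdots * f_n^\#)$, which is the assertion. The only genuinely substantial ingredient is the majorization of the convolution by the convolution of the rearrangements, and this is precisely what is being imported from \cite{madiman2019majorization}, so nothing there needs to be reproved; the Schur-concavity bookkeeping is routine, with the boundary cases $\alpha \in \{0,1,\infty\}$ the one point that demands a moment's care.
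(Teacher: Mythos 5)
Your proposal is correct and follows exactly the route the paper uses: it imports the majorization $f_1 * \cdots * f_n \prec f_1^\# * \cdots * f_n^\#$ from Madiman--Wang--Woo and then applies the Schur-concavity of $H_\alpha$ across $\alpha \in [0,\infty]$. Your extra bookkeeping on the boundary cases $\alpha \in \{0,1,\infty\}$ is a harmless elaboration of the same argument.
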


The significance of the theorem for our pursuits here, is that it will reduce our investigations of a min-entropy power inequality to Bernoulli and uniform distributions.

\begin{cor} \label{cor: Bernoulli and Uniforms reduction}
    For $X_i$ independent variables with $\|f_{X_i}\|_\infty \leq 1/C_i$
    and with $C_i \in (1,2] \bigcup \cup_{i=3}^{\infty} \{i\}$ then
    \begin{align*}
        H_\infty(X_1 + \cdots +X_n) \geq H_\infty(Z_1 + \cdots + Z_n)
    \end{align*}
    where the variables $Z_i$ are independent and $f_{Z_i} \in \mathcal{E}(\mathcal{P}_{C_i}(\llbracket \floor*{C_i} \rrbracket))$. Moreover, $Z_i$ is Bernoulli when $C_i \in (1,2]$ and Uniform on $\{0,1,\dots, C_i-1\}$ when $C_i \in \cup_{i=3}^\infty \{i\}$.
\end{cor}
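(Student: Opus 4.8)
The plan is to chain together the two reduction results already established, namely Corollary \ref{cor: min-entropy reduction} (extreme point reduction for $\alpha = \infty$) and Theorem \ref{thm: Madiman majorization theorem} (the Madiman--Wang--Woo rearrangement inequality), and then to identify the extreme points of $\mathcal{P}_{C_i}(\llbracket \floor*{C_i} \rrbracket)$ explicitly in the two regimes for $C_i$. First I would apply Corollary \ref{cor: min-entropy reduction}: since each $X_i$ takes values in a finite set $M$ and satisfies $H_\infty(X_i) \geq \log C_i$, i.e. $\|f_{X_i}\|_\infty \leq 1/C_i$, we obtain independent $U_i$ with $f_{U_i} \in \mathcal{E}(\mathcal{P}_{C_i}(M))$ and $H_\infty(X_1 + \cdots + X_n) \geq H_\infty(U_1 + \cdots + U_n)$.

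Next I would invoke the structure theorem for extreme points. By that theorem, each $f_{U_i}$ has the form $\frac{\mathbbm{1}_{A_i}}{C_i} + (1 - \frac{\floor{C_i}}{C_i}) \mathbbm{1}_{\{x_i\}}$ with $|A_i| = \floor{C_i}$ and $x_i \notin A_i$; in particular $f_{U_i}$ is supported on exactly $\floor{C_i}$ or $\floor{C_i}+1$ points, so its decreasing rearrangement $f_{U_i}^\#$ is supported in $\llbracket \floor{C_i} \rrbracket$. When $C_i$ is an integer $\geq 3$, the extra point mass term vanishes and $f_{U_i}^\#$ is the uniform distribution on $\{0,1,\dots,C_i - 1\}$, which is log-concave. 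When $C_i \in (1,2]$, we have $\floor{C_i} = 1$, so $f_{U_i}$ is supported on exactly two points and $f_{U_i}^\#$ is a two-point mass on $\{0,1\}$, i.e. a Bernoulli; a two-point density is trivially log-concave in the sense of Definition \ref{def:log-concave} (there is no interior integer where the log-concavity inequality can fail, and the support is an interval). Hence in both cases $f_{U_i}^\#$ is log-concave, so Theorem \ref{thm: Madiman majorization theorem} applies with $\alpha = \infty$ to give
\[
    H_\infty(U_1 * \cdots * U_n) \geq H_\infty(f_{U_1}^\# * \cdots * f_{U_n}^\#).
\]
Setting $Z_i$ to be the variable with density $f_{U_i}^\#$, which lies in $\mathcal{E}(\mathcal{P}_{C_i}(\llbracket \floor*{C_i} \rrbracket))$ since rearrangement preserves membership in that extreme-point set, and combining the two displayed inequalities yields the claim, together with the stated identification of $Z_i$ as Bernoulli or Uniform.

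The only genuinely delicate point — and the place I would be most careful — is checking that $f_{U_i}^\#$ is log-concave and still lies in the relevant extreme-point set, since Theorem \ref{thm: Madiman majorization theorem} requires log-concavity of the rearranged densities $f_i^\#$ rather than of the $f_i$ themselves. For integer $C_i$ this is the uniform density, which is clearly log-concave; for $C_i \in (1,2]$ it is a two-point density, for which log-concavity holds vacuously. One should also note that the ambient support set can be taken to be $\llbracket \floor*{C_i} \rrbracket$ after rearrangement, since $f_{U_i}^\#$ places all its mass on $\{0, 1, \dots, \floor{C_i}\}$, which is exactly the set on which $\mathcal{P}_{C_i}(\llbracket \floor*{C_i} \rrbracket)$ lives; membership of $f_{U_i}^\#$ in $\mathcal{E}(\mathcal{P}_{C_i}(\llbracket \floor*{C_i}\rrbracket))$ then follows from the same extreme-point description. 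Everything else is a direct concatenation of results already proved in the paper.
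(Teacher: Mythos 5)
Your chain of reasoning for the compactly supported case — extreme-point reduction via Corollary \ref{cor: min-entropy reduction}, identification of the extreme points as two-point masses or uniforms, checking log-concavity of the rearranged densities, and then invoking Theorem \ref{thm: Madiman majorization theorem} — is essentially identical to the argument in the paper, and your verification that $f_{U_i}^\#$ lands in $\mathcal{E}(\mathcal{P}_{C_i}(\llbracket \floor*{C_i}\rrbracket))$ is correct.

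However, there is a genuine gap: you write ``since each $X_i$ takes values in a finite set $M$'' as if this were part of the hypotheses, but Corollary \ref{cor: Bernoulli and Uniforms reduction} does not assume finite support. The bound $\|f_{X_i}\|_\infty \leq 1/C_i$ with $C_i > 1$ does not force a finite support (take a geometric with small enough parameter, for instance), and Corollary \ref{cor: min-entropy reduction} is only stated for variables valued in a finite set, so you cannot apply it directly. The paper devotes roughly the second half of its proof to precisely this issue: it constructs truncated densities $f_i^{(m)}$ redistributed to compactly supported $f_{\tilde X_i}^{(m)}$ that satisfy the same $L^\infty$ constraint, shows these converge weakly to $f_{X_i}$, establishes that $f \mapsto H_\infty(f)$ is upper semi-continuous under weak convergence, and that $(f_1,\dots,f_n) \mapsto f_1 * \cdots * f_n$ is weakly continuous, in order to pass the finitely supported bound to the general case. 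Without that approximation step your proof only establishes the corollary for finitely supported $X_i$, which is a strictly weaker statement than the one claimed.
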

The proof is given in the appendix \cite{BernoulliAppendix}.

\subsection{Min Entropy Inequality}

The aim of this section is to prove the following Min Entropy power inequality which constitutes one of our main theorems.

\begin{thm}[Min-EPI] \label{thm: Min-EPI}
For independent integer-valued random variables $X_i$, the following holds
\begin{align*}
    \Delta_\infty(X_1 + \cdots + X_n) \geq \frac 1 {22} \sum_{i=1}^n \Delta_\infty(X_i).
\end{align*}
\end{thm}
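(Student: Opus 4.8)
The plan is to run the extreme-point and rearrangement reduction already developed, and then to bound the maximum of the resulting convolution by its variance using log-concavity.

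First I would reduce. Set $C_i:=\|f_{X_i}\|_\infty^{-1}\ge 1$, so $\Delta_\infty(X_i)=C_i^2-1$. By Corollary~\ref{cor: min-entropy reduction} we may replace each $X_i$ by an extreme point $g_i$ of its constraint set — a density constant on a set of $\lfloor C_i\rfloor$ integers plus one smaller atom — without increasing $H_\infty$ of the sum; by Theorem~\ref{thm: Madiman majorization theorem} we may then replace $g_i$ by its monotone rearrangement $g_i^{\#}$, supported on $\{0,\dots,\lfloor C_i\rfloor\}$ and \emph{log-concave}, again without increasing $H_\infty$ of the sum. This is precisely the argument behind Corollary~\ref{cor: Bernoulli and Uniforms reduction}; it extends verbatim to arbitrary $C_i$ (and to infinitely supported $X_i$, via the truncation argument given there) exactly because these rearranged extreme points are log-concave. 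Since rearrangement preserves $\|\cdot\|_\infty$ we have $\Delta_\infty(g_i^{\#})=\Delta_\infty(X_i)$ and $\Delta_\infty(X_1+\cdots+X_n)\ge\Delta_\infty(g_1^{\#}+\cdots+g_n^{\#})$, so it suffices to treat the case in which each $X_i=g_i^{\#}$ is one of these log-concave ``truncated uniforms''.

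The engine is then a maximal inequality: every log-concave density $p$ on $\mathbb Z$ satisfies
\[
    \|p\|_\infty^{-2}\ \ge\ \Var(p)+1,\qquad\text{equivalently}\qquad \Delta_\infty(p)\ \ge\ \Var(p),
\]
the extremal instance being the geometric law $p_n=M(1-M)^n$, for which $\Var=(1-M)/M^2=M^{-2}-M^{-1}\le M^{-2}-1$. I would deduce this from the extremal property that, among log-concave densities on $\mathbb Z$ with a prescribed maximal value $M$, variance is largest for this geometric — log-concavity limits how far a ``plateau, then decay'' profile can push its mass, and transporting tail mass outward preserves both log-concavity and the maximum while only increasing variance. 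Since convolution preserves log-concavity on $\mathbb Z$, $S:=g_1^{\#}+\cdots+g_n^{\#}$ is log-concave, so additivity of variance gives $\Delta_\infty(S)\ge\Var(S)=\sum_i\Var(g_i^{\#})$.

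Finally, for any probability density $q$ on $\mathbb Z$ one has $\Var(q)\ge\frac1{12}\Delta_\infty(q)$, with equality for uniform laws on a block of consecutive integers (given $\|q\|_\infty=M$ the support meets at least $\lceil 1/M\rceil$ integers, and variance subject to this is minimized by the possibly truncated uniform, of variance $\ge(M^{-2}-1)/12$); for Bernoulli summands this is Lemma~\ref{lem: Delta versus variance}. Applying it to each $g_i^{\#}$ and chaining,
\[
    \Delta_\infty(X_1+\cdots+X_n)\ \ge\ \Var(S)\ =\ \sum_{i=1}^n\Var(g_i^{\#})\ \ge\ \frac1{12}\sum_{i=1}^n\Delta_\infty(g_i^{\#})\ =\ \frac1{12}\sum_{i=1}^n\Delta_\infty(X_i),
\]
which proves the theorem — in fact with $\tfrac1{12}$, hence a fortiori with $\tfrac1{22}$ (using only a crude version of the maximal inequality, or rounding each $C_i$ into $(1,2]\cup\{3,4,\dots\}$ so as to quote Corollary~\ref{cor: Bernoulli and Uniforms reduction} as stated, would cost at most a bounded factor). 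The main obstacle is the log-concave maximal inequality of the middle step — securing a clean proof with an adequate constant — together with the routine checks that the reduction is valid for non-integer $C_i$ and infinitely supported variables and that $\Var\ge\frac1{12}\Delta_\infty$ holds at the truncated-uniform extreme points.
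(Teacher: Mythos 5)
Your architecture is genuinely different from the paper's and hinges on a lemma the paper neither proves nor uses: that $\Delta_\infty(p)\ge\Var(p)$ for \emph{every} log-concave $p$ on $\mathbb Z$, applied to the full convolution $S=g_1^{\#}\ast\cdots\ast g_n^{\#}$. The paper does not go this way. It splits the summands by whether $\|f_{X_i}\|_\infty\ge\tfrac12$, bounds the Bernoulli block via the Fourier-theoretic Theorem~\ref{thm: Renyi entropy by variance} (giving $\Delta_\infty\ge 2\Var$ for Poisson--Binomials) and the uniform block via the \emph{symmetric} log-concave comparison Theorem~\ref{thm: BMM on logconcave symm max versus variance} (again a constant $2$), uses Theorem~\ref{thm: General min entropy upper bounded by variance} in the other direction, and then merges the two blocks with the elementary $\max(\alpha a,\beta b)\ge\tfrac{\alpha\beta}{\alpha+\beta}(a+b)$. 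Your single ``maximal inequality'' would collapse this case analysis, but you correctly flag it as the main obstacle, and it is a real one: the cited result in \cite{BMM20} covers only symmetric log-concave laws, and the best constant for \emph{general} log-concave laws cannot exceed $1$ (the one-sided geometric $p_n=M(1-M)^n$ has $\Delta_\infty/\Var=1+M\to1$), whereas the paper's two special-case bounds both carry constant $2$. The extremality of the geometric among all log-concave mass functions with prescribed maximum is plausible but your ``transport tail mass outward'' sketch is not a proof, and without it the middle step has no support.

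There is a second, quantitative problem you gloss over. Your claim that the extreme-point/rearrangement reduction ``extends verbatim to arbitrary $C_i$ exactly because these rearranged extreme points are log-concave'' is false: for $C_i\in(2,3)$, say, an extreme point can have the small atom between the two $1/C_i$-atoms, and its compacted version $g_i^{\#}=(1/C_i,\,1-2/C_i,\,1/C_i)$ on $\{0,1,2\}$ fails log-concavity since $1-2/C_i<1/C_i$; Theorem~\ref{thm: Madiman majorization theorem} then does not apply. One must round $C_i$ down to an integer (as Corollary~\ref{cor: Bernoulli and Uniforms reduction} requires), and the paper quantifies the resulting loss as $\Delta_\infty(Z_i)/\Delta_\infty(X_i)\ge\frac{n_i^2-1}{(n_i+1)^2-1}\ge\tfrac38$. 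Chaining your maximal inequality (optimistically with constant $1$) with the $\tfrac1{12}$ lower bound and this rounding factor gives $\tfrac1{12}\cdot\tfrac38=\tfrac1{32}<\tfrac1{22}$, so even granting the unproven lemma your route as written would not recover the stated constant; the ``in fact with $\tfrac1{12}$'' claim only holds if rounding were free, which it is not.
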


In order to prove Theorem \ref{thm: Min-EPI}, we will use a comparison between the min-entropy and the variance (in both directions). Such a comparison is essentially known in the literature. The next result {holds} for all random variables and is  sharp for uniform distributions.

\begin{thm}[Bobkov-Chistyakov \cite{BC15:2, BMM20}] \label{thm: General min entropy upper bounded by variance}
For a discrete {random} variable $X$,
\begin{align} \label{eq: General min entropy upper bounded by variance}
    \Delta_\infty(X) \leq 12  \Var(X).
\end{align}
\end{thm}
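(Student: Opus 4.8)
The plan is to reduce the lattice statement to a one-line continuous estimate by a smoothing trick, exploiting that integer spacing is exactly ``one unit''. Write $M:=\|f_X\|_\infty=\max_{n\in\mZ}f_X(n)\in(0,1]$, so that $\Delta_\infty(X)=M^{-2}-1$ and the claim reads $M^{-2}-1\le 12\Var(X)$; we may assume $\Var(X)<\infty$, else there is nothing to prove. I would introduce $U$ uniform on $[0,1)$, independent of $X$, and set $Y:=X+U$. Then $Y$ has a Lebesgue density $g$ with $g(y)=f_X(\lfloor y\rfloor)$, hence $\|g\|_\infty=M$, and by independence $\Var(Y)=\Var(X)+\Var(U)=\Var(X)+\tfrac1{12}$.

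Thus it suffices to establish the continuous fact: any real random variable $Y$ whose law has a density $g$ with $\|g\|_\infty\le M$ satisfies $\Var(Y)\ge \tfrac1{12M^2}$, with equality for a uniform on an interval of length $1/M$. I would prove this by a bathtub rearrangement: translating so that $\mE Y=0$, let $I=[-\tfrac1{2M},\tfrac1{2M}]$ and $g^\ast=M\,\mathbbm{1}_I$, a probability density. On $I$ one has $g-g^\ast\le0$ and $y^2\le\tfrac1{4M^2}$, while off $I$ one has $g-g^\ast\ge0$ and $y^2\ge\tfrac1{4M^2}$; hence $\bigl(y^2-\tfrac1{4M^2}\bigr)\bigl(g(y)-g^\ast(y)\bigr)\ge0$ for every $y$. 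Integrating and using $\int(g-g^\ast)=0$ gives $\Var(Y)=\int y^2 g\ge \int y^2 g^\ast=\tfrac1{12M^2}$.

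Combining the two facts yields $\Var(X)+\tfrac1{12}\ge\tfrac1{12M^2}$, that is
\[
    \Delta_\infty(X)=M^{-2}-1\le 12\,\Var(X),
\]
which is the assertion. Sharpness for $X$ uniform on $\{0,1,\dots,k-1\}$ is immediate, since then $Y=X+U$ is uniform on $[0,k)$ and both inequalities above are equalities. There is essentially no serious obstacle: the argument is routine bookkeeping once one has the smoothing identity $\Var(X+U)=\Var(X)+\tfrac1{12}$ together with $\|g_{X+U}\|_\infty=\|f_X\|_\infty$. The only point worth flagging is that the reduction genuinely uses the unit lattice spacing (the inequality is false for discrete variables supported on arbitrarily fine grids), and this is precisely what convolving with a unit-length uniform encodes.
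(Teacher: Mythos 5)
Your proof is correct. Note that the paper does not actually prove this statement: it is quoted as a known result and attributed to Bobkov--Chistyakov and to \cite{BMM20}, so there is no in-paper argument to compare against. Your smoothing-plus-rearrangement derivation is a complete and self-contained substitute, and it is essentially the standard route in the cited literature: convolve with a unit uniform to get $Y=X+U$ with $\|g_Y\|_\infty=\|f_X\|_\infty$ and $\Var(Y)=\Var(X)+\tfrac1{12}$, then invoke the continuous extremal fact $\Var(Y)\ge \tfrac{1}{12\|g_Y\|_\infty^2}$, which your bathtub comparison with $g^\ast=M\mathbbm{1}_{[-1/(2M),1/(2M)]}$ proves cleanly (the pointwise inequality $(y^2-\tfrac{1}{4M^2})(g-g^\ast)\ge 0$ and $\int(g-g^\ast)=0$ give exactly what is needed). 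Your remarks on sharpness for uniforms on $\{0,\dots,k-1\}$ and on the essential role of unit lattice spacing are both accurate; the only cosmetic point is that the theorem's hypothesis ``discrete variable'' should be read, as everywhere in the paper, as integer-valued, which is precisely the hypothesis your argument uses.
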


To state the other direction, we need to introduce two definitions. First we say that 
an integer-valued random variable $X$ is log-concave if its probability mass function $f_X$ is log-concave, in the sense of Definition \ref{def:log-concave}. In other words,
$f_X^2(n) \geq f_X(n-1) f_X(n+1)$ holds for all $n$, and $f_X(k)f_X(n) > 0$ for $k < m < n$ implies $f_X(m) >0$. Next we define the notion of symmetry.

\begin{defn}[Symmetric Random variable]
    A real-valued random variable $X$ is symmetric when there exists $a$, such that
    \begin{align*}
        \mathbb{P}(X = a + x) = \mathbb{P}(X= a-x).
    \end{align*}
    holds for all $x \in \mathbb{R}$.
\end{defn}
Note that since we consider only $X$ integer-valued, $ a \in \frac 1 2 \mathbb{Z}$.  Also recall that both symmetry and log-concavity are preserved under independent summation.

\begin{thm}[Bobkov-Marsiglietti-Melbourne \cite{BMM20}] \label{thm: BMM on logconcave symm max versus variance}
    For an integer-valued, symmetric, log-concave {random} variable $X$,
    \begin{equation} \label{eq: BMM on logconcave symm max versus variance}
    \Delta_\infty(X) \geq 2  \Var(X).
    \end{equation}
\end{thm}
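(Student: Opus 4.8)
The plan is to deduce \eqref{eq: BMM on logconcave symm max versus variance} by comparing $X$ with a discrete analogue of the Laplace law. Writing $M:=\|f_X\|_\infty$, the claim is exactly $M^{-2}\ge 1+2\Var(X)$, i.e.\ $M^2\bigl(1+2\Var(X)\bigr)\le 1$. Since $f_X$ is symmetric and log-concave it is unimodal, and its maximal value $M$ is attained either at a single integer (the centre of symmetry $a$, when $a\in\mathbb Z$) or at the two consecutive integers $a\pm\tfrac12$ (when $a\in\tfrac12+\mathbb Z$); after translating we may assume $a=0$ or $a=\tfrac12$. The comparison object is the symmetric two-sided geometric $g^*$, given by $g^*(n)=Mr^{|n|}$ when $a=0$ and by $g^*(n)=Mr^{|n-1/2|-1/2}$ when $a=\tfrac12$, where $r\in[0,1)$ is chosen so that $g^*$ is a probability mass function; one checks that $r=\tfrac{1-M}{1+M}$ (resp.\ $r=1-2M$) does the job, so in particular $\|g^*\|_\infty=M$ and $\Delta_\infty(g^*)=M^{-2}-1=\Delta_\infty(X)$. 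Summing geometric series one verifies directly that $\Delta_\infty(g^*)\ge 2\Var(g^*)$, with equality when $a=0$ and with strict slack when $a=\tfrac12$ (there it reduces to $(5+3r)(1-r)\ge0$). Thus the theorem follows once we establish the variance comparison $\Var(X)\le\Var(g^*)$.

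To prove $\Var(X)\le\Var(g^*)$ --- i.e.\ that the two-sided geometric maximises the variance among symmetric log-concave laws with a prescribed maximal density --- I would use a single-crossing argument. Put $d:=g^*-f_X$; then $d$ is symmetric about $a$ and $\sum_n d(n)=0$. On the half-line $n\ge a$ the function $\varphi(n):=\log f_X(n)-\log g^*(n)$ is concave (in the extended-real sense): $\log g^*$ is affine there, while $\log f_X$ is concave on the support of $X$ by log-concavity and equals $-\infty$ outside it. Moreover $\varphi$ vanishes at the mode. A concave sequence is unimodal, so $\varphi$ is first nonnegative and then nonpositive; unless $f_X=g^*$, the constraint $\sum_n d(n)=0$ together with the fact that $g^*$ has full support forces $\varphi$ to be genuinely positive near the mode and negative far out. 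Hence there is an integer $k\ge1$ such that $d\le0$ on the symmetric central block $B:=\{n:|n-a|\le k\}$ and $d\ge0$ off $B$. For a suitable threshold $c$ (e.g.\ $c=(k-|a|)^2$, which is at least $\max_{n\in B}(n-a)^2$ and at most $\min_{n\notin B}(n-a)^2$) one gets $\bigl((n-a)^2-c\bigr)d(n)\ge0$ for every $n$, so
\[
\Var(g^*)-\Var(X)=\sum_n (n-a)^2 d(n)=\sum_n\bigl((n-a)^2-c\bigr)d(n)\ge0 .
\]
Combining, $\Delta_\infty(X)=\Delta_\infty(g^*)\ge 2\Var(g^*)\ge 2\Var(X)$, which is \eqref{eq: BMM on logconcave symm max versus variance}.

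Apart from the routine geometric-series computation $\Delta_\infty(g^*)\ge 2\Var(g^*)$, the step that needs care --- and the one I expect to be the main obstacle --- is making the single-crossing analysis airtight: checking that $\varphi$ is genuinely concave on the relevant half-line including the transition at the edge of the support of $X$ (where $\log f_X$ drops to $-\infty$), that the resulting sign pattern of $d$ is exactly ``$\le 0$ on a symmetric central block, $\ge 0$ outside'', and that every degenerate configuration (finite support of $X$, $M=1$, or the half-integer case with $M=\tfrac12$ so $r=0$) forces $f_X=g^*$ and hence reduces the conclusion to the explicit geometric computation. The two parities of $a$ run in parallel and only the final explicit inequality for $g^*$ differs --- tight for $a=0$, with room to spare for $a=\tfrac12$ --- so one presents the case $a=0$ in detail and notes that $a=\tfrac12$ is analogous.
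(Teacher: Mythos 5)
Your proposal is correct and follows essentially the same route as the paper: reduce to the symmetric two-sided geometric law with the same maximal probability, compare variances, and then verify $\Delta_\infty\ge 2\Var$ for the geometric by an explicit series computation (your factorization $(5+3r)(1-r)\ge 0$ is exactly the paper's endpoint check $8-2(1-p)^2\ge p^2+6p+1$ in the half-integer case, and the integer case is indeed the equality case). The only difference is presentational: where you prove the variance comparison $\Var(X)\le\Var(g^*)$ directly by a single-crossing argument, the paper invokes the majorization $g^*\prec f_X$ from the literature together with the Schur-concavity of the variance on symmetric distributions, which amounts to the same mechanism.
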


Strictly speaking, the result in \cite{BMM20} only covered the case that $X$ was symmetric about an integer point.  The argument used a majorization result from \cite{MT20} to reduce to a distribution on $\mathbb{Z}$ of the form $n \mapsto C p^{|n|}$ (which is resolved through direct computation), and the fact proven in \cite{BMM20}, that the variance is Schur-concave on the space of symmetric distributions.  For the convenience of the reader we cover the (missing) case that $X$ is symmetric about a point belonging to $\frac 1 2 + \mathbb{Z}$ in the appendix \cite{BernoulliAppendix}.

We are now in position to prove Theorem \ref{thm: Min-EPI}.

\begin{proof}[Proof of Theorem \ref{thm: Min-EPI}]
Given $X_1, X_2, \dots, X_n$ independent, we assume without loss of generality that for $i \leq k$, $\|f_{X_i} \|_\infty \geq 1/2$ and $i > k$ implies $\|f_{X_i}\|_\infty < 1/2$. 
By Corollary \ref{cor: min-entropy reduction},
\begin{align*}
    \Delta_\infty(X_1 + \cdots + X_n) &\geq \Delta_\infty(B_1 + \cdots + B_k + Z_{k+1} + \cdots + Z_n)
\end{align*}
where $B_i$ and $Z_j$ are all independent, the $B_i$ are Bernoulli satisfying $M(B_i) = M(X_i)$ and $Z_i$ is uniformly distributed on $\{1,2, \dots, n_i\}$ where $n_i$ is uniquely determined by $M(X_j) \in (\frac{1}{n_i + 1}, \frac 1 {n_i}]$.  Note that trivially,
\begin{align*}
    \Delta_\infty(X_1 + \cdots + X_n)
        \geq \max \{\Delta_\infty(B_1 + \cdots + B_k), \Delta_\infty(Z_1 + \cdots + Z_n) \} .
\end{align*}
Using the variance-min entropy comparisons above (\textit{i.e.}\ Inequalities  \eqref{eq: General min entropy upper bounded by variance} and \eqref{eq: BMM on logconcave symm max versus variance}) for Bernoulli random variables, 
\begin{align*}
    \Delta_\infty \left( \sum_i B_i \right)     \geq 
            2 \Var \left( \sum_i B_i \right) 
        = 
            2 \sum_i \Var(B_i) 
        \geq 
            \frac 1 6 \sum_i \Delta_\infty(B_i)
        =
            \frac 1 6 \sum_i \Delta_\infty(X_i) .
\end{align*}
Similarly, since $Z_1 + \cdots +Z_n$ is an independent sum of symmetric (about the point $\frac{n_i - 1}{2}$) log-concave variables, using the variance-min entropy comparisons, this time for symmetric log-concave variables,
\begin{align*}
    \Delta_\infty(Z_{k+1} + \cdots + Z_n)
        \geq
            2 \sum_j \Var(Z_j)
        \geq
            \frac 1 6 \sum_j \Delta_\infty(Z_i).
\end{align*}
Since $n_j \geq 2$
\begin{align*}
    \frac{\Delta_\infty(Z_j)}{\Delta_\infty(X_j)} 
        \geq 
            \frac{ n_j^2 -1}{(n_j +1)^2 -1}
        \geq
            \frac{2^2 - 1}{(2+1)^2-1}
        = 
            \frac 3 8.
\end{align*}
Thus it follows that
\begin{align*}
    \Delta_\infty(Z_{k+1} + \cdots Z_n)
        \geq 
            \frac 1 {16}\sum_{j = k+1}^n \Delta_\infty(Z_j).
\end{align*}
Finally,
\begin{align*}
    \Delta_\infty(X_1 + \cdots + X_n) 
        \geq
            \max \left\{ \frac 1 6 \sum_{i=1}^k \Delta_\infty(X_i), \frac 1 {16} \sum_{j = k+1}^n \Delta_\infty(X_j) \right\}
        \geq
            \frac 1 {22} \sum_{i=1}^n \Delta_\infty(X_i).
\end{align*}
where we use the fact that $\max(\alpha a, \beta b) \geq \frac{\alpha \beta}{\alpha + \beta}(a+b)$, valid for any non-negative $\alpha, \beta, a, b$.
\end{proof}

\subsection{Min-Entropy inequalities: Tightenings, and Reversals}



 
For the min-entropy we may also give a reversal of min-EPI for Bernoulli sums.
\begin{thm}
For $X_i$ independent Bernoulli sums,
\begin{align*}
    \Delta_\infty(\sum_i X_i) \leq 6 \sum_i \Delta_\infty(X_i).
\end{align*}
\end{thm}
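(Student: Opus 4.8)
The plan is to sandwich $\Delta_\infty$ between constant multiples of the variance at both ends, using the two one-sided comparisons already recorded in this section. Write $S \coloneqq X_1 + \cdots + X_n$. Since each $X_i$ is a finite independent sum of Bernoullis, $S$ is itself a Poisson-Binomial, and in any case a discrete random variable, so Theorem~\ref{thm: General min entropy upper bounded by variance} applies to $S$ and yields $\Delta_\infty(S) \leq 12\,\Var(S)$. By additivity of the variance on independent summands, $\Var(S) = \sum_{i=1}^n \Var(X_i)$, hence
\begin{align*}
    \Delta_\infty(S) \leq 12 \sum_{i=1}^n \Var(X_i).
\end{align*}

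The second ingredient is a lower bound on each $\Delta_\infty(X_i)$ in terms of $\Var(X_i)$, and here I would invoke the sharp Poisson-regime comparison rather than the general one. Each $X_i$ is a Poisson-Binomial, so Theorem~\ref{thm: Renyi entropy by variance} in the case $\alpha = \infty$ (equivalently $\alpha' = 1$), recorded as \eqref{eq:PB}, gives $\Delta_\infty(X_i) \geq 2\,\Var(X_i)$, i.e. $\Var(X_i) \leq \frac 1 2 \Delta_\infty(X_i)$. Substituting into the previous display,
\begin{align*}
    \Delta_\infty(S) \leq 12 \sum_{i=1}^n \Var(X_i) \leq 12 \cdot \frac 1 2 \sum_{i=1}^n \Delta_\infty(X_i) = 6 \sum_{i=1}^n \Delta_\infty(X_i),
\end{align*}
which is exactly the asserted reversal.

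There is essentially no technical obstacle; the only point requiring a moment's care is the direction in which the two comparisons are used: one must apply the cheap universal bound $\Delta_\infty \leq 12\,\Var$ to the single sum $S$ and the Poisson-Binomial-specific bound $\Delta_\infty \geq 2\,\Var$ to each summand $X_i$, since using the inequalities the other way round would not close the estimate. It is worth noting that the constant $6$ obtained this way — the product $12 \times \frac 1 2$ of the extremal constants in the two comparisons — is almost certainly not optimal for $n \geq 2$ (the extremizers of the two bounds are different: $\theta = \frac 1 2$ Bernoullis for the upper bound, and $\theta \to 0$ Bernoullis or small-parameter Poissons for the lower bound), but it has the merit of being independent of $n$ and of the Bernoulli parameters.
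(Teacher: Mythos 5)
Your proof is correct and follows exactly the paper's argument: apply the universal upper bound $\Delta_\infty(S) \leq 12\,\Var(S)$ to the sum, use additivity of variance, and then invoke the Poisson-Binomial lower bound $\Delta_\infty(X_i) \geq 2\,\Var(X_i)$ (Inequality~\eqref{eq:PB}) on each summand. The remarks on sharpness and on which direction each comparison must be used are sensible but not part of the proof itself.
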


\begin{proof}
    By Theorems \ref{thm: General min entropy upper bounded by variance} and \ref{thm: Renyi entropy by variance} (see Inequality \eqref{eq:PB}),
    \begin{align*}
        \Delta_\infty(X_1 + \cdots + X_n) 
            \leq
                12 \ \Var\left(\sum_i X_i \right)
            =
                12 \sum_i \Var(X_i)
            \leq 
                6 \sum_i \Delta_\infty(X_i).
    \end{align*}
\end{proof}

In the case that $X_i$ are concentrated about a point, one can actually tighten the min-EPI beyond the $\Delta_\infty(\sum_i X_i) \geq \frac 1 6 \sum_i \Delta_\infty(X_i)$ that one would achieve through variance comparisons in the min-EPI reversals for Bernoulli sums, as we show in what follows.  We will need the following Lemma whose proof is suppressed to the appendix.

\begin{lem} \label{lem: Bernoulli exponential bound with min entropy power}
For $X$ a Bernoulli random variable, and $t \in [-\pi, \pi]$,
\begin{align}\label{eq: characteristic bounds by min entropy power}
    |\mathbb{E} e^{itX}| \leq e^{-\Delta_\infty(X) t^2/24}.
\end{align}
\end{lem}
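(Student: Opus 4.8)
The plan is to use the explicit form of the characteristic function of a Bernoulli variable, namely equation \eqref{eq: cosine expression of char. func}, which gives $|\mathbb{E}e^{itX}| = \sqrt{(1-\lambda) + \lambda\cos t}$ with $\lambda = 2\sigma^2 = 2\Var(X) \in [0,1/2]$, and to relate $\Delta_\infty(X)$ to $\lambda$ directly. Writing the Bernoulli parameter as $\theta \in [0,1]$, we have $\|f_X\|_\infty = \max\{\theta, 1-\theta\}$, so $\Delta_\infty(X) = \|f_X\|_\infty^{-2} - 1 = \max\{\theta,1-\theta\}^{-2} - 1$. I would express this in terms of $\lambda = 2\theta(1-\theta)$: if $m = \|f_X\|_\infty$ then $\lambda = 2m(1-m)$, and one checks that $\Delta_\infty(X) = (1-m^2)/m^2 \leq 3\lambda$ uniformly for $m \in [1/2,1]$ (equivalently $\lambda \in [0,1/2]$), with the worst case at $m=1/2$, i.e. $\theta = 1/2$, where $\Delta_\infty = 3$ and $3\lambda = 3/2$... so one must be careful: actually at $\theta=1/2$, $\Delta_\infty = 4-1 = 3$ and $\lambda = 1/2$, giving $\Delta_\infty/\lambda = 6$, not $3$. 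Reexamining: the claimed exponent is $\Delta_\infty t^2/24$, while the $w_\lambda$ bound in Theorem \ref{thm: Bernoulli Lp Bounds} carries exponent $3\lambda t^2/(2\pi^2)$ on $[-\pi,\pi]$; since $\lambda \geq \Delta_\infty/6$ is exactly the content of $\Delta_\infty \leq 6\Var = 3\lambda$ (which is Lemma \ref{lem: Delta versus variance} / Theorem \ref{thm: General min entropy upper bounded by variance} specialized to Bernoullis, $\Delta_\infty(X) \le 12\Var(X) = 6\lambda$ — wait, $\Var = \theta(1-\theta) = \lambda/2$, so $12\Var = 6\lambda$). Let me recompute: $\Delta_\infty \le 12\Var(X) = 6\lambda$, hence $3\lambda/(2\pi^2) \ge \Delta_\infty/(4\pi^2) \ge \Delta_\infty/24$ since $\pi^2 \le \... $ no, $4\pi^2 \approx 39.5 > 24$, so this goes the wrong way.

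So the right approach is instead to prove directly the pointwise inequality $\sqrt{(1-\lambda)+\lambda\cos t} \leq e^{-\Delta_\infty(X)\,t^2/24}$ for $t \in [-\pi,\pi]$. By symmetry assume $t \in [0,\pi]$. Taking logarithms and squaring, this is equivalent to $\tfrac12\log\bigl((1-\lambda)+\lambda\cos t\bigr) \leq -\Delta_\infty(X)\,t^2/24$, i.e. $\log\bigl(1 - \lambda(1-\cos t)\bigr) \leq -\Delta_\infty(X)\,t^2/12$. Using $\log(1-x) \leq -x$ for $x \in [0,1)$, it suffices that $\lambda(1-\cos t) \geq \Delta_\infty(X)\,t^2/12$, and since $1 - \cos t \geq t^2/\pi^2 \cdot (\pi^2/ \cdot)$... precisely, on $[0,\pi]$ one has $1-\cos t \geq \tfrac{2}{\pi^2}t^2$ (this is concavity of $1-\cos$ relative to... actually $1-\cos t - \tfrac{2}{\pi^2}t^2$ vanishes at $0$ and $\pi$ and is concave-then-convex; one verifies it is $\geq 0$ on $[0,\pi]$ by checking it is nonnegative — standard). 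Then it remains to show $\lambda \cdot \tfrac{2}{\pi^2} \geq \Delta_\infty(X)/12$, i.e. $\Delta_\infty(X) \leq \tfrac{24}{\pi^2}\lambda$. Since $\pi^2 < 10 < 12$, this would follow from $\Delta_\infty(X) \leq 2\lambda = 4\Var(X)$ — but that is false at $\theta = 1/2$ where $\Delta_\infty = 3$ and $2\lambda = 1$.

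Hence the clean pointwise route fails at $\theta$ near $1/2$, and the main obstacle is handling a neighborhood of $\theta = 1/2$ separately: there $\Delta_\infty(X) \le 3$ is bounded, while the left side decays, so a crude bound like $|\mathbb{E}e^{itX}| \le e^{-ct^2}$ with a concrete $c$ from the $\theta=1/2$ case ($|\mathbb{E}e^{itX/2}|=|\cos(t/2)| \le e^{-t^2/\pi^2}$ on $[-\pi,\pi]$, using $\cos u \le e^{-2u^2/\pi^2}$ on $[-\pi/2,\pi/2]$) plus monotonicity in $|\theta - 1/2|$ of both sides suffices once one checks $\Delta_\infty(X) t^2/24 \le t^2/\pi^2$, i.e. $\Delta_\infty(X) \le 24/\pi^2 \approx 2.43$. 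That still fails ($\Delta_\infty$ reaches $3$). The correct split is therefore: (i) for $\theta$ bounded away from $1/2$, say $\|f_X\|_\infty \ge \theta_0$ for suitable $\theta_0$, use $\log(1-x)\le -x$ and the bound $\Delta_\infty(X) = (1-m^2)/m^2$ against $\lambda = 2m(1-m)$ — here $\Delta_\infty/\lambda = (1+m)/(2m^2) \cdot (1-m)/(1-m)$... $= (1-m^2)/(2m^2(1-m)) = (1+m)/(2m^2)$, which on $m\in[1/2,1]$ decreases from $3$ (at $m=1/2$) to $1$ (at $m=1$); so for $m \ge m_0$ we get $\Delta_\infty \le \tfrac{1+m_0}{2m_0^2}\lambda$, and we need $\tfrac{1+m_0}{2m_0^2}\cdot\tfrac{2}{\pi^2} \ge \tfrac{1}{12}$; (ii) for $m \le m_0$ (near uniform), bound $\Delta_\infty(X)$ by the constant $\tfrac{1-m_0^2}{m_0^2}$ and use a sharper pointwise bound on $\sqrt{(1-\lambda)+\lambda\cos t}$ valid when $\lambda$ is near $1/2$. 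Optimizing $m_0$ to make both cases close the constant $24$ is the routine-but-fiddly computation I would carry out; I expect $m_0$ can be chosen so both estimates work, since the target constant $24$ is generous (the truly sharp constant at $\theta=1/2$ would be around $2\pi^2 \approx 19.7$, and for $\theta \to 0$ the factor $\tfrac12\log(1-\lambda(1-\cos t))\approx -\tfrac{\lambda t^2}{2}\cdot\tfrac12$ vs $-\Delta_\infty t^2/24 \approx -\lambda t^2/24$ has lots of room). The main obstacle, then, is the case near $\theta = 1/2$, and the resolution is the two-regime split with a carefully chosen threshold $m_0$ and, in the near-uniform regime, the elementary estimate $\cos t \le e^{-2t^2/\pi^2}$ on $[-\pi/2,\pi/2]$ together with $(1-\lambda)+\lambda\cos t \le \exp(-\lambda(1-\cos t)) \le \exp(-2\lambda t^2/\pi^2)$ on all of $[-\pi,\pi]$. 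Since this is a routine appendix lemma, I would relegate these calculus verifications to the appendix as the paper indicates.
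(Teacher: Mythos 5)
Your high-level idea — use the explicit cosine form \eqref{eq: cosine expression of char. func} of the Bernoulli characteristic function — matches the paper, and you correctly diagnose that the naive route via $\log(1-x)\leq -x$ and $1-\cos t\geq 2t^2/\pi^2$ cannot survive near $\theta=1/2$. But the two-regime fix as sketched has a genuine gap, and it traces to a misconception you voice near the end: the constant $24$ is \emph{not} generous at $\theta=1/2$; it is exactly sharp there. At $\theta=1/2$ one has $\Delta_\infty(X)=3$ and $|\mathbb{E}e^{itX}|=|\cos(t/2)|$, whose logarithm expands as $-t^2/8+O(t^4)$, which matches $-\Delta_\infty(X)\,t^2/24=-t^2/8$ to leading order. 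More generally, writing $m=\|f_X\|_\infty$, the Taylor expansion at $t=0$ forces $c\geq 4\Delta_\infty/\lambda=2(1+m)/m^3$ in any bound $|\mathbb{E}e^{itX}|\leq e^{-\Delta_\infty t^2/c}$, and $2(1+m)/m^3$ is decreasing on $[1/2,1]$ with maximum value $24$ at $m=1/2$. (Incidentally, your computation $\Delta_\infty/\lambda=(1+m)/(2m^2)$ is off by a power of $m$: it is $(1+m)/(2m^3)$, which equals $6$ at $m=1/2$, consistent with $\Delta_\infty\leq 12\operatorname{Var}=6\lambda$ being saturated there.)

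Because $24$ is attained in the limit $t\to 0$ at $\theta=1/2$, there is no slack to absorb by a crude constant bound on $\Delta_\infty$ in the near-uniform regime. Concretely, the estimate you propose for Regime (ii), $(1-\lambda)+\lambda\cos t\leq e^{-2\lambda t^2/\pi^2}$, would at $m=1/2$ require $1/\pi^2\geq \Delta_\infty/12=1/4$, which is false; and $\cos u\leq e^{-2u^2/\pi^2}$ is likewise too weak. You would need the stronger $\cos u\leq e^{-u^2/2}$ just to get the endpoint $m=1/2$, and then you still must pass to nearby $m$ without losing any constant — a crude upper bound on $\Delta_\infty$ by its maximum cannot do this. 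Your sketch therefore does not close.

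The paper's proof handles the interpolation exactly. With $q=1/m\in[1,2]$ (so $\Delta_\infty=q^2-1$), it sets
\[
F(q)\;:=\;q^2 e^{-\frac{q^2-1}{12}t^2}\;-\;2(q-1)\cos t\;-\;(q-1)^2\;-\;1,
\]
shows by a careful second-derivative computation (with a case split in $t$) that $F$ is concave on $[1,2]$ for each fixed $t\in[0,\pi]$, and then concludes $F\geq 0$ from the endpoint checks $F(1)=0$ and $F(2)=4e^{-t^2/4}-2\cos t-2\geq 0$; the latter is exactly $\cos u\leq e^{-u^2/2}$ with $u=t/2$. The concavity-in-$q$ step is the mechanism that moves from the two extremes to all intermediate $m$ without any loss of constant; your plan has no analogue of it, and given the sharpness at $m=1/2$, some such lossless interpolation device is essential.
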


Observe that, when $X$ is expanding the inequality at $t=0$ shows that the constant $1/24$ is optimal in the latter and we will show that this inequality can be used to derive a sharpening of the min-EPI for for Bernoulli sums. 

\begin{thm}\label{thm: small value min EPI}
For $X_i$ independent and integer-valued such that $\|f_{X_i} \|_\infty = c_i \geq \frac 1 2$,
\begin{align*}
    \Delta_\infty(X_1 + \cdots + X_n) \geq \frac{\pi^2}{36} \sum_{j=1}^n \Delta_\infty(X_i).
\end{align*}
\end{thm}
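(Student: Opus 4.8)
The plan is to reduce to a sum of independent Bernoulli variables with the same sup-norms, bound the characteristic function of the reduced sum via Lemma \ref{lem: Bernoulli exponential bound with min entropy power}, and then invert the Fourier transform using the sharp Gaussian integral estimate of Lemma \ref{lem: Gaussian integral bound}. We may discard any $X_i$ with $c_i = 1$ (a Dirac mass), since it only translates $X_1 + \cdots + X_n$ and contributes $0$ to the right-hand side; thus assume $c_i \in [\tfrac12,1)$ for all $i$ and set $C_i := 1/c_i \in (1,2]$. Applying Corollary \ref{cor: Bernoulli and Uniforms reduction} (whose hypothesis $\|f_{X_i}\|_\infty \leq 1/C_i$ holds here with equality), we obtain independent Bernoulli variables $B_i$ with $\|f_{B_i}\|_\infty = 1/C_i = c_i$, hence $\Delta_\infty(B_i) = c_i^{-2} - 1 = \Delta_\infty(X_i)$, such that $\Delta_\infty(X_1 + \cdots + X_n) \geq \Delta_\infty(B_1 + \cdots + B_n)$. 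Write $S := \sum_{i=1}^n \Delta_\infty(X_i) = \sum_{i=1}^n \Delta_\infty(B_i)$.

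By independence and Lemma \ref{lem: Bernoulli exponential bound with min entropy power}, for every $t \in [-\pi,\pi]$,
\[
    \bigl| \hat f_{B_1 + \cdots + B_n}(t) \bigr| = \prod_{i=1}^n \bigl| \mathbb{E} e^{itB_i} \bigr| \leq \prod_{i=1}^n e^{-\Delta_\infty(B_i)t^2/24} = e^{-St^2/24},
\]
so Fourier inversion on $\mathbb{Z}$, evaluated at the mode of $B_1 + \cdots + B_n$, yields
\[
    \|f_{B_1+\cdots+B_n}\|_\infty \leq \frac{1}{2\pi}\int_{-\pi}^\pi \bigl| \hat f_{B_1+\cdots+B_n}(t) \bigr|\, dt \leq \frac{1}{\pi}\int_0^\pi e^{-St^2/24}\, dt.
\]
Substituting $u = t\sqrt{S/12}$ and setting $z := \pi\sqrt{S/12}$ gives $\int_0^\pi e^{-St^2/24}\, dt = \frac{\pi}{z}\int_0^z e^{-u^2/2}\, du$, hence $\|f_{B_1+\cdots+B_n}\|_\infty \leq \frac{1}{z}\int_0^z e^{-u^2/2}\, du \leq (1 + z^2/3)^{-1/2}$ by the first bound of Lemma \ref{lem: Gaussian integral bound}. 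Since $z^2/3 = \pi^2 S/36$, this gives $\|f_{B_1+\cdots+B_n}\|_\infty \leq (1 + \pi^2 S/36)^{-1/2}$, and therefore
\[
    \Delta_\infty(X_1 + \cdots + X_n) \geq \|f_{B_1+\cdots+B_n}\|_\infty^{-2} - 1 \geq \frac{\pi^2}{36}\, S = \frac{\pi^2}{36}\sum_{i=1}^n \Delta_\infty(X_i).
\]

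Everything after the reduction is a mechanical chain, so the only genuine choice --- and the step I would be most careful about --- is to invoke Lemma \ref{lem: Gaussian integral bound} in the form $(1+z^2/3)^{-1/2}$ rather than the cruder tail bound $\sqrt{\pi/2z^2}$: its additive ``$1+\cdots$'' structure is exactly what cancels against the ``$-1$'' in $\Delta_\infty(Y) = \|f_Y\|_\infty^{-2} - 1$ to produce the clean constant $\pi^2/36$, and it is also the only bound tight enough to remain useful as $S \to 0$. One should additionally double-check that the reduction of Corollary \ref{cor: Bernoulli and Uniforms reduction} may be taken with $1/C_i$ equal to $\|f_{X_i}\|_\infty$ exactly, so that $\Delta_\infty$ is preserved term by term and the summands on the two sides of the final inequality match.
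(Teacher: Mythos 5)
Your proof is correct and follows essentially the same route as the paper's: reduce to Bernoulli variables via Corollary \ref{cor: Bernoulli and Uniforms reduction} (taking care, as you do, that $\Delta_\infty$ is preserved term by term), bound the characteristic function of the sum by a Gaussian via Lemma \ref{lem: Bernoulli exponential bound with min entropy power}, invert with Hausdorff--Young, and finish with Lemma \ref{lem: Gaussian integral bound}. The only difference is cosmetic: you multiply the pointwise exponential Fourier bounds directly (which is possible because the bound is itself multiplicative in $\Delta_\infty$), whereas the paper routes through the H\"older-based Lemma \ref{lem: Fourier bound to fourier bound on sum}; this slightly shortens the argument but is not a different method.
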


\begin{proof}
By Corollary \ref{cor: Bernoulli and Uniforms reduction} it suffices to prove the result when the $X_i$ are independent Bernoulli. By Lemma \ref{lem: Bernoulli exponential bound with min entropy power},
\begin{align*}
    \| \hat{f}_{X_i} \|_q^q 
        \leq 
            \frac{1}{2\pi} \int_{-\pi}^{\pi} e^{-\Delta_\infty(X_i) q t^2/24} dt
        =
            \frac{\int_0^{\sqrt{\pi^2 \Delta_\infty(X_i) q/12}} e^{-t^2/2} dt}{\sqrt{\pi^2\Delta_\infty(X_i) q/12}}
           .
\end{align*}
Thus applying Lemma \ref{lem: Fourier bound to fourier bound on sum} with $\Phi(q) = \frac{1}{\sqrt{\pi^2  q/12}}
           \int_0^{\sqrt{\pi^2q/12}} e^{-t^2/2} dt$, and $c_i = \Delta_\infty(X_i)$, we have
\begin{align*} 
   \| \hat{f}_{\sum_i X_i} \|_q^q \leq \Phi \left(q \sum_i \Delta_\infty(X_i) \right).
\end{align*}

By Hausdorff-Young, this gives
\begin{align*}
    \| f_{\sum_i X_i } \|_\infty 
        \leq 
            \| \hat{f}_{\sum_i X_i} \|_1 
        \leq
            \frac{\int_0^{\sqrt{\pi^2(\sum_j \Delta_\infty(X_j))/12}} e^{-t^2/2} dt}{\sqrt{\pi^2 (\sum_j \Delta_\infty(X_j)) /12}}.
\end{align*}
Applying Lemma \ref{lem: Gaussian integral bound} with $z = \sqrt{\pi^2 \sum_j \Delta_\infty(X_j)/12}$ this gives
\begin{align*}
    \| f_{\sum_j X_j} \|_\infty \leq \sqrt{\frac{1}{1 + \frac{\pi^2}{36} \sum_j \Delta_\infty(X_j)}}
\end{align*}
which yields,
\begin{align*}
    \Delta_\infty(\sum_j X_j) \geq \frac{\pi^2}{36} \sum_j \Delta_\infty(X_j).
\end{align*}
\end{proof}

Let us note that the largest constant $c$ such that $\Delta_\infty(\sum_i X_i) \geq c \sum_i \Delta_\infty(X_i)$ holds for any collection of independent $X_i$ is no larger than $\frac 1 2$, as can be seen by taking $X_1$ and $X_2$ to be iid Bernoulli with parameter $p = 1/2$
(and $\pi^2/36 \simeq 0.27 > \frac{1}{4}$). Note that one can alternatively apply Theorem \ref{thm: Renyi entropy by variance} and \ref{lem: Delta versus variance} to obtain a similar result $\Delta_\infty(X_1 + \cdots + X_n) \geq 2 \sum_{i=1}^n \Var(X_i) \geq \frac{1}{6} \Delta_\infty(X_i)$ at the expense of a constant.  Also note that applying the Bernoulli tightening to the proof of Theorem \ref{thm: Min-EPI} gives
\begin{align*}
    \Delta_\infty(X_1 + \cdots + X_n) 
        \geq 
            \max \left\{ \frac{\pi^2}{36} \sum_{i=1}^k \Delta_\infty(X_i), \frac 1 {16} \sum_{j= k+1}^n \Delta_\infty(X_i) \right\}
 \geq
            \frac{\sum_{i=1}^n \Delta_\infty(X_i)}{16 + 36/\pi^2} ,
\end{align*}
and an improvement to a constant $c = \frac{1}{16 + 36/\pi^2} > \frac{1}{20}$ in Theorem \ref{thm: Min-EPI}.




\section{Littlewood-Offord Problem}  \label{sec: littlewood-offord problem}

In this section we apply the results above to an entropic generalization of the Littlewood-Offord problem.

As usual, we denote by the dot sign the usual scalar product in $\mathbb{R}^n$ so that $S_v=v \cdot B$ with $v=(v_1,\dots,v_n)$ and $B=(B_1,\dots,B_n)$.
We first present a generalization of \cite[Theorem 1.2]{Sin19} {(see also \cite{JK21})}.

\begin{lem} \label{lem: Entropic Problem reduction to Bernoulli}
Let $B=(B_1,\dots,B_n)$ such that $B_i$ are independent Bernoulli random variables, for $\alpha \in [0,\infty]$ and $v=(v_1,\dots,v_n) \in \mathbb{R}^n$ such that $v_i \neq 0$ for all $i$, with $S_v \coloneqq  v_1 B_1 + \cdots + v_n B_n$ it holds that
\begin{align*}
    H_\alpha( S_v) \geq  \max_{a \in \{-1,1\}^n} H_\alpha (S_a).
\end{align*}
\end{lem}

\begin{proof}
Considering $\mathbb{R}$ as a vector space and choose\footnote{To find such a map, when $n=1$ choose a Hamel Basis for $\mathbb{R}$ over $\mathbb{Q}$ that extends $\{v_1\}$, take $\Phi(v_1) =1$ and all other basis elements to $0$.  By induction, choose a linear map such that $\Phi(v_i) \neq 0$ for $i < n$, and $\Psi$ such that $\Psi(v_n) \neq 0$.  Then choose a map of the form $\lambda \Phi + \Psi$ for $\lambda \in \mathbb{Q}$ such that $\lambda \Phi(v_i) - \Psi(v_i) \neq 0$ for all $i$ and write $\tilde{T} = \lambda \Phi - \Psi$.  Since $\tilde{T}(v_i) = \frac{p_i}{q_i}$ for $p_i, q_i \in \mathbb{Z} - \{0 \}$, $T = q \tilde{T}$ for $q = \prod_i q_i$ yields such a map.} a linear function $T: \mathbb{R} \to \mathbb{Q}$ such that $T(v_i) \in \mathbb{Z} - \{ 0\}$.  Then
\begin{align*}
    H_p( S_v) \geq H_p( T(S_v))
\end{align*}
since  deterministic functions of a random variable decrease R\'enyi entropy. As $T(S_v) = T(v_1) B_1 + \cdots + T(v_n) B_n$, it suffices to consider integer coefficients. Assuming $v_i \in \mathbb{Z} - \{0\}$, take $a_i = sign(v_i)$ where $sign(x) = \mathbbm{1}_{(0,\infty)}(x) - \mathbbm{1}_{(-\infty,0)}(x)$, then $(v_1 B_1)^{\#} + \cdots + (v_n B_n)^{\#}$ has the same distribution as $S_a +m$ where $m \coloneqq \# \{i : v_i < 0 \}$.  

Applying Theorem \ref{thm: Madiman majorization theorem}, 
\begin{align*}
    H_\alpha(S_v) \geq H_\alpha((v_1 B_1)^{\#} + \cdots + (v_n B_n)^{\#}) = H_\alpha(S_a + m ) = H_\alpha(S_a).
\end{align*}
\end{proof}

\begin{thm} \label{thm: entropic Littlewood-Offord}
For $S_v = v_1 B_1 + \cdots + v_n B_n$, where $v_i \neq 0$, $B_i$ independent Bernoulli of variance $\sigma_i^2$ and $\alpha \geq 2$,
\begin{align*}
    H_\alpha(S_v) 
        \ \geq \ \log \left[ \frac{\sqrt{6 \sigma^2 \alpha'}}{\int_0^{\sqrt{6 \sigma^2 \alpha'}} e^{-t^2/2} dt} \right] 
        \ \geq \  \frac 1 2  \max \left\{ \log (1+ 2 \alpha' \sigma^2), 
  \log \left( \frac{12 \alpha' \sigma^2}{\pi } \right) \right\},
\end{align*}
where $\sigma^2 = \sum_i \sigma_i^2$.
\end{thm}

\begin{proof}

By Lemma \ref{lem: Entropic Problem reduction to Bernoulli}, it suffices to consider $v$ with $v_i = \pm 1$, and since $v_i B_i$ is the translation of a Bernoulli $X_i$ with the same variance as $B_i$

\begin{align*}
    H_\alpha( v_1 B_1 + \cdots + v_n B_n)
        =
            H_\alpha(X_1 + \cdots + X_n)
        \geq
            \log \frac{\sqrt{6 \sigma^2 \alpha'}}{\int_0^{\sqrt{6 \sigma^2 \alpha' }} e^{-t^2/2} dt}.
\end{align*}
The inequality follows from Theorem \ref{thm: Renyi entropy by variance}.  Applying Lemma \ref{lem: Gaussian integral bound} completes the proof.
\end{proof}

{ When $v_i =1$ for all $i$ and the  $B_i$ are iid Bernoulli($\lambda$), then $$H_\alpha(S_v) = \frac 1 2 \log \left( 1 + 2 \alpha' \lambda n + o(\lambda), \right)$$ so the constant $2 \alpha'$ is optimal in the inequality $H_\alpha(S_v) \geq \frac 1 2 \log ( 1 + 2 \alpha' \sigma^2 )$, or equivalently
$
       \Delta_p (S_v) \geq 2 \alpha' \sigma^2.
$ for every $\alpha \geq 2$.
}

Let us relate Theorem \ref{thm: entropic Littlewood-Offord} to the usual Littlewood-Offord problem, that is determining upper bounds on 
$
    Q(S,0) \coloneqq \max_x \mathbb{P}(S= x)
$
where $S \coloneqq \sum_{i=1}^n v_i B_i$ where $v_i \in \mathbb{R} \setminus \{0\}$ and  $B_i$ iid Bernoulli of parameter $p$, classically chosen with $p = 1/2$.   We recall the following question of Fox, Kwan, and Sauermann.
\begin{ques}[\cite{FKS19} Question 6.2]
For $(v_1, \dots, v_n) \in (\mathbb{R} \setminus \{0 \})^n$ and $B_1, \dots, B_n$ iid Bernoul\-li with some parameter $0 < p \leq 1/2$ and $S=v_1 B_1 + \cdots + v_n B_n$. What upper bounds (in terms of $n$ and $p$) can we give on the maximum point probability $Q(S,0) = \max_{x \in \mathbb{R}} \mathbb{P}(S=x)$?
\end{ques}
When $p = \frac 1 2$ bounds this is a reformulation of the classical problem, determining the number of subsums that fall in a given location \cite{LO43}.

\begin{lem} \label{lem: ELO Problem reduction to Bernoulli}
Let $B=(B_1,\dots,B_n)$ such that $B_i$ are independent Bernoulli random variables, for $v=(v_1,\dots,v_n) \in \mathbb{R}^n$ such that $v_i \neq 0$ for all $i$, it holds
\begin{align*}
    Q(v \cdot B,0) \leq \max_{a \in \{-1,1\}^n} Q(a \cdot B,0).
\end{align*}
\end{lem}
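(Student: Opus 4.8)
The plan is to reduce the general-$v$ statement to the sign case via a one-coordinate-at-a-time argument, exploiting that for fixed $x$ the map $t \mapsto \mathbb{P}(t B_i + R = x)$, where $R$ collects the remaining terms, behaves well under averaging. More precisely, fix a coordinate $i$ and write $S_v = v_i B_i + R$ with $R = \sum_{j \neq i} v_j B_j$ independent of $B_i$. Conditioning on $B_i$, for any $x$ we have $\mathbb{P}(S_v = x) = (1-p_i)\,\mathbb{P}(R = x) + p_i\,\mathbb{P}(R = x - v_i)$, which is a convex combination of two values of the density $f_R$, hence bounded above by $\max\{f_R(x), f_R(x-v_i)\} \le \|f_R\|_\infty$. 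But $\|f_R\|_\infty$ does \emph{not} depend on $v_i$ at all, so in particular $Q(S_v, 0) = \|f_{v_iB_i + R}\|_\infty \le \|f_R\|_\infty = Q(0\cdot B_i + R, 0)$. This already shows we can delete coordinates, but to replace $v_i$ by $\pm 1$ rather than $0$ we need a slightly more careful comparison, so the real argument should instead compare $\|f_{v_iB_i+R}\|_\infty$ against $\max_{a_i \in \{-1,1\}}\|f_{a_iB_i+R}\|_\infty$ directly.

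For that, observe that for Bernoulli $B_i$ the variable $v_i B_i$ takes only two values $\{0, v_i\}$, so $f_{v_iB_i+R}(x) = (1-p_i) f_R(x) + p_i f_R(x-v_i)$ is a two-term convex combination of translates of $f_R$, and every such value is $\le \|f_R\|_\infty$; the same bound holds with $v_i$ replaced by any of $1$ or $-1$, and is attained in the limit wherever $f_R$ attains its supremum. The cleanest route, and the one I would write up, is: proceed by induction on the number of coordinates whose $v_j$ is not already $\pm 1$; at each step apply the pointwise bound $f_{v_iB_i+R} \le \|f_R\|_\infty$ together with the observation that $\|f_R\|_\infty \le \max_{a_i\in\{-1,1\}} \|f_{a_iB_i+R}\|_\infty$ is false in general but $\|f_R\|_\infty = \|f_{0\cdot B_i + R}\|_\infty$ is an \emph{upper} bound that we then compare to the sign case by applying the inductive hypothesis to the $(n-1)$-vector. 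Concretely: $Q(v\cdot B, 0) \le \|f_R\|_\infty = Q(v'\cdot B', 0)$ where $v' = (v_j)_{j\neq i}$ and $B' = (B_j)_{j\neq i}$; by induction $Q(v'\cdot B',0) \le \max_{a'\in\{-1,1\}^{n-1}} Q(a'\cdot B', 0)$, and trivially $Q(a'\cdot B',0) \le \max_{a\in\{-1,1\}^n}Q(a\cdot B, 0)$ since dropping a $\pm1$ coordinate only increases the max-probability by the same conditioning identity. This closes the induction with base case $n=0$ (empty sum) trivial.

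The one subtlety — and the step I expect to be the only genuine obstacle — is verifying the monotonicity claim that deleting a coordinate cannot decrease the maximum point mass, i.e. $Q(\sum_{j\in T} v_j B_j, 0) \le Q(\sum_{j \in T\setminus\{i\}} v_j B_j, 0)$ for any $i$; but this is exactly the convex-combination bound above: $f_{v_iB_i+R}(x) \le \max\{f_R(x), f_R(x-v_i)\} \le \|f_R\|_\infty$, so taking the sup over $x$ gives it. Everything else is bookkeeping. I would present it as: (1) the conditioning identity and the two-point convex-combination bound; (2) the resulting deletion monotonicity $Q(v\cdot B,0)\le Q(v'\cdot B',0)$; (3) induction on the count of non-$\pm1$ entries, using deletion to reduce dimension and the inductive hypothesis plus the same deletion monotonicity (now in the $\pm1$ world) to land inside $\max_{a\in\{-1,1\}^n} Q(a\cdot B,0)$.
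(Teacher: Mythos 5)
There is a genuine gap in the final step of your induction, and it cannot be repaired with your tools. You correctly establish the deletion monotonicity $Q(v\cdot B,0)\le Q(v'\cdot B',0)$, and by induction you get $Q(v'\cdot B',0)\le \max_{a'}Q(a'\cdot B',0)$. But then you assert $Q(a'\cdot B',0)\le \max_{a\in\{-1,1\}^n}Q(a\cdot B,0)$ and justify it by saying ``dropping a $\pm1$ coordinate only increases the max-probability.'' Read that again: if dropping a coordinate \emph{increases} $Q$, then $Q(a'\cdot B',0)\ge Q(a\cdot B,0)$ for every $a$ extending $a'$, which is the \emph{opposite} of what you need. The intermediate quantity $\max_{a'}Q(a'\cdot B',0)$ is in general strictly larger than the target $\max_{a}Q(a\cdot B,0)$, so the chain breaks.

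Concretely: let $n=3$, $B_1,B_2,B_3$ i.i.d.\ Bernoulli$(1/2)$, and $v=(1,1,3)$. Then $S_v=B_1+B_2+3B_3$ has atoms at $\{0,1,2,3,4,5\}$ with probabilities $\tfrac18,\tfrac14,\tfrac18,\tfrac18,\tfrac14,\tfrac18$, so $Q(v\cdot B,0)=\tfrac14$, while for every $a\in\{-1,1\}^3$ the sum $a\cdot B$ is (a translate/reflection of) $\mathrm{Binomial}(3,\tfrac12)$ so $\max_aQ(a\cdot B,0)=\tfrac38$, and the lemma holds. But deleting coordinate $3$ gives $v'=(1,1)$ and $Q(v'\cdot B',0)=Q(B_1+B_2,0)=\tfrac12>\tfrac38$, so your induction passes through a quantity that exceeds the bound you are trying to prove. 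The two-point convex-combination estimate $Q(v_iB_i+R,0)\le\|f_R\|_\infty$ is correct, but it throws away too much information: it is oblivious to the fact that keeping $a_iB_i$ in the sum with $a_i=\pm1$ genuinely spreads mass and lowers the maximum.

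What is missing is the ingredient the paper actually uses: one cannot rescale $v_i$ to $\pm1$ one coordinate at a time by a pointwise density bound; one needs the rearrangement/majorization inequality of Madiman--Wang--Woo (Theorem~\ref{thm: Madiman majorization theorem}, applied with $\alpha=\infty$), which says that $f_1*\cdots*f_n$ is majorized by $f_1^\#*\cdots*f_n^\#$. The paper first transports general real coefficients to $\mathbb{Q}$ via a $\mathbb{Q}$-linear map $T$ with $T(v_i)\ne0$ (note $\mathbb{P}(S=x)\le\mathbb{P}(T(S)=T(x))$), clears denominators to land in $\mathbb{Z}$, and only then applies the rearrangement theorem to replace each two-point density supported on $\{0,v_i\}$ by its contiguous rearrangement, giving the sign vector $a_i=\mathrm{sign}(v_i)$. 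Your argument contains none of these three moves, and in particular no substitute for the majorization step, which is where the real content lies.
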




\begin{proof}  This is Lemma \eqref{lem: Entropic Problem reduction to Bernoulli} in the case $\alpha = \infty$.
\end{proof}

Let us emphasize the following result, which follows from taking $\alpha = \infty$ in Theorem \ref{thm: entropic Littlewood-Offord}.

\begin{cor} \label{thm: Bernoulli sum inequality}
    For $v_i \in \mathbb{R} \setminus \{0\}$, $S_v = v_1 X_1 + \cdots + v_n X_n$ where $X_i$ are independent Bernoulli random variables with variance $\sigma_i$, and denoting by  $\sigma^2 = \sum_j \sigma_j^2$, then 
    \begin{align} \label{eq: bound for bernoulli sums}
        Q(S_v,0) \leq \frac 1 { \sqrt{6 \sigma^2}} \int_0^{ \sqrt{6 \sigma^2} } e^{-t^2/2} dt.
    \end{align}
\end{cor}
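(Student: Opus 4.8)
The plan is to peel the real coefficients down to signs using the rearrangement reduction already in hand, and then to observe that the point probabilities of a \emph{signed} combination of independent Bernoullis are controlled by the $L^1$-norm of the characteristic function of the \emph{unsigned} Poisson--Binomial sum, to which Theorem \ref{thm: Bernoulli sum lp fourier bound} applies verbatim.

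Concretely, I would proceed as follows. First, apply Lemma \ref{lem: ELO Problem reduction to Bernoulli} with $B_i = X_i$ to get $Q(S_v,0) \le \max_{a \in \{-1,1\}^n} Q(a\cdot X, 0)$. Now fix $a \in \{-1,1\}^n$. The variable $a\cdot X = \sum_i a_i X_i$ is integer valued, so the Hausdorff--Young inequality with $p=\infty$, $q=1$ gives $Q(a\cdot X,0) = \|f_{a\cdot X}\|_\infty \le \|\hat{f}_{a\cdot X}\|_1$. Next, note that for every $t$,
\[
|\hat{f}_{a\cdot X}(t)| = \prod_{i=1}^n |\mathbb{E} e^{i t a_i X_i}| = \prod_{i=1}^n |\mathbb{E} e^{i t X_i}| = |\hat{f}_{X_1 + \cdots + X_n}(t)|,
\]
since $|\mathbb{E} e^{-itX_i}| = |\overline{\mathbb{E} e^{itX_i}}| = |\mathbb{E} e^{itX_i}|$; hence $\|\hat{f}_{a\cdot X}\|_1 = \|\hat{f}_{X_1+\cdots+X_n}\|_1$, independently of $a$. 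Since $X_1 + \cdots + X_n$ is a Poisson--Binomial, it is in particular a Bernoulli sum with variance $\sigma^2 = \sum_j \Var(X_j)$, so Theorem \ref{thm: Bernoulli sum lp fourier bound} applied with $q = 1$ yields $\|\hat{f}_{X_1+\cdots+X_n}\|_1 \le \frac{1}{\sqrt{6\sigma^2}} \int_0^{\sqrt{6\sigma^2}} e^{-t^2/2}\,dt$. Chaining the three inequalities gives \eqref{eq: bound for bernoulli sums}.

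There is no genuinely hard step here; the only points to watch are that the reduction of Lemma \ref{lem: ELO Problem reduction to Bernoulli} leaves the individual Bernoulli laws (and hence the total variance $\sigma^2$) unchanged, and that multiplying a factor of the characteristic function by a sign leaves its modulus untouched, so that after Hausdorff--Young one is truly looking at the characteristic function of the plain Poisson--Binomial. As an alternative to invoking Theorem \ref{thm: Bernoulli sum lp fourier bound} directly, one could combine Theorem \ref{thm: Bernoulli Lp Bounds} with Lemma \ref{lem: Fourier bound to fourier bound on sum}, taking $q = 1$, $c_i = \Var(X_i)$, and $\Phi(x) = \frac{1}{\sqrt{6x}}\int_0^{\sqrt{6x}} e^{-t^2/2}\,dt$, to obtain the same bound.
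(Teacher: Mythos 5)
Your proof is correct and follows essentially the same route as the paper: reduce to $\pm 1$ coefficients via Lemma \ref{lem: ELO Problem reduction to Bernoulli}, apply Hausdorff--Young, and invoke Theorem \ref{thm: Bernoulli sum lp fourier bound} (or equivalently Theorem \ref{thm: Bernoulli Lp Bounds} plus Lemma \ref{lem: Fourier bound to fourier bound on sum}). The one point where you diverge cosmetically is how you pass from $a\cdot X$ to the plain Poisson--Binomial $\sum_i X_i$: the paper rewrites $-B_i = (1-B_i) - 1$ and observes that $1-B_i$ is again Bernoulli with the same variance, so $S_a$ is a translate of a Poisson--Binomial; you instead note directly that $|\mathbb{E}\,e^{-itX_i}| = |\mathbb{E}\,e^{itX_i}|$, so the modulus of the characteristic function---and hence its $L^1$ norm---is identical with or without the signs. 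Your version is arguably a touch more streamlined since it sidesteps the need to name the translated variable, but the two arguments encode the same observation and land on the same bound.
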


\begin{cor} \label{cor: Poisson regime}
    When $S_v = v \cdot B$ for $B = (B_1, \dots, B_n)$ for $B_i$ iid Bernoulli {random variables} of parameter $p$,  
    \begin{align*}
        Q(S_v,0) \leq \frac{1}{\sqrt{ 1 + 2 n p (1-p)} }.
    \end{align*}
\end{cor}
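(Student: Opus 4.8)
The plan is to deduce this immediately from Corollary~\ref{thm: Bernoulli sum inequality} together with Lemma~\ref{lem: Gaussian integral bound}, since all the real work has already been done. First I would record that each $B_i$ is Bernoulli with parameter $p$, hence has variance $\sigma_i^2 = p(1-p)$, so that in the notation of Corollary~\ref{thm: Bernoulli sum inequality} we have $\sigma^2 = \sum_{i=1}^n \sigma_i^2 = n p(1-p)$. Applying that corollary directly to $S_v = v_1 B_1 + \cdots + v_n B_n$ (the hypothesis $v_i \neq 0$ is exactly what is needed) gives
\[
    Q(S_v,0) \leq \frac{1}{\sqrt{6\sigma^2}} \int_0^{\sqrt{6\sigma^2}} e^{-t^2/2}\,dt.
\]

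Next I would invoke Lemma~\ref{lem: Gaussian integral bound} with $z = \sqrt{6\sigma^2}$, using only the first term of the minimum there, namely $\frac{1}{z}\int_0^z e^{-t^2/2}\,dt \leq (1 + z^2/3)^{-1/2}$. Since $z^2/3 = 2\sigma^2 = 2np(1-p)$, this yields
\[
    Q(S_v,0) \leq \frac{1}{\sqrt{1 + 2np(1-p)}},
\]
which is the claimed bound.

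There is essentially no obstacle here: the statement is a corollary in the literal sense, a one-step specialization of Corollary~\ref{thm: Bernoulli sum inequality} to the i.i.d.\ case followed by the elementary Gaussian-integral estimate. The only thing to be careful about is matching the constant $\sqrt{6\sigma^2}$ inside the Gaussian integral with the $1 + z^2/3$ form of the bound, i.e.\ checking that $(\sqrt{6\sigma^2})^2/3 = 2\sigma^2$, which is immediate. (One could alternatively phrase the conclusion so as to keep the sharper form $\frac{1}{\sqrt{6np(1-p)}}\int_0^{\sqrt{6np(1-p)}} e^{-t^2/2}\,dt$, but the stated corollary already commits to the cleaner $(1+2np(1-p))^{-1/2}$ bound.)
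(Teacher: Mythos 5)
Your proof is correct and is essentially identical to the paper's: the paper likewise applies Lemma~\ref{lem: Gaussian integral bound} to the bound of Corollary~\ref{thm: Bernoulli sum inequality} with $\sigma^2 = np(1-p)$. No gaps.
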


\begin{proof}
    Applying \ref{lem: Gaussian integral bound} to Corollary \ref{thm: Bernoulli sum inequality} while observing that $\sigma^2 = n p (1-p)$ gives the result.
\end{proof}

 Corollary \ref{thm: Bernoulli sum inequality} and Corollary \ref{cor: Poisson regime} are sharp for any $n$ as can be seen by taking small variance Bernoulli and $v_i = 1$.  Moreover, since $X \mapsto Q(X,0)$ is lower semi-continuous with respect to the weak topology for $X$ taking values on $\mathbb{Z}$, that the inequality $Q(S,0) \leq \frac 1 { \sqrt{6 \sigma^2}} \int_0^{ \sqrt{6 \sigma^2} } e^{-t^2/2} dt$ holds when $S$ is Poisson of parameter $\lambda$, in which case $\sigma^2 = \lambda$ and we have the following Taylor expansions for $\lambda$ near zero
\begin{align*}
    Q(S,0) = e^{-\lambda} = 1 - \lambda + o(\lambda),
\end{align*}
while by Lemma \ref{lem: Gaussian integral bound} (with $\sigma^2=\lambda$)
$
    \frac 1 { \sqrt{6 \sigma^2}} \int_0^{ \sqrt{6 \sigma^2} } e^{-t^2/2} dt \leq 
    1- \lambda + o(\lambda) .
$
Thus in the ``Poisson regime'', when $S_v$ is a sum of many small variance Bernoulli, and Poisson approximation can be invoked, inequality \eqref{eq: bound for bernoulli sums} is tight. In the Gaussian regime, when the local limit theorem applies, for example for a sequence of iid Bernoulli random variables $S = X_1 + \cdots + X_n$ gives 
\begin{align*}
    Q(S,0) \leq \frac{1}{\sqrt{ 2\pi \sigma_S^2}} + o (1/\sqrt{n}).
\end{align*}
Meanwhile by integrating on the whole $[0,\infty)$, 
$$
\frac 1 {\sqrt{6 \sigma^2}} \int_0^{\sqrt{6 \sigma^2}} e^{-t^2/2} dt \leq \frac 1 {\sqrt{6 \sigma^2}} \int_0^{\infty} e^{-t^2/2} dt = \sqrt{ \frac {\pi}{12 \sigma^2}},
$$
so the bounds cannot be improved by more than a constant factor in the Gaussian regime.  In particular when the $X_i$ are iid Bernoulli with parameter $1/2$, then by Erd\"os's sharp solution (see \cite{Erd45}), to the Littlewood-Offord problem, $Q(S,0) \leq 2^{-n} \left( \begin{array}{c} n \\ \floor{n/2} \end{array} \right) \approx \sqrt{\frac{2}{\pi n}},$ while $ \sqrt{ \frac {\pi}{12 \sigma^2}} = \sqrt{ \frac {\pi}{3}} \frac{1}{\sqrt{n}}$ (and we are off by a factor of $\pi/\sqrt{6}\simeq 1.28$).  

.

\section*{Acknowledgements}
The authors thank Arnaud Marsiglietti for stimulating discussion and in particular suggesting the connection to the Littlewood-Offord question of \cite{FKS19}{, as well as an anonymous reviewer whose careful reading and suggestions have improved this article, and to whom Proposition \ref{prop: reviewer} is to be credited.} \\

{The last author was supported by the Labex MME-DII funded by ANR, reference ANR-11-LBX-0023-01 and ANR-15-CE40-0020-03 - LSD - Large Stochastic Dynamics, and the grant of the Simone and Cino Del Duca Foundation, France.}

\begin{appendix}
\section*{Supplemental material}
In this appendix we will prove
Lemma 2.6, Lemma 2.9, Theorem 4.1 and 4.2, Corollary 4.8, Theorem 4.12,
and
 Lemma 4.14.


\begin{proof}[Proof of Lemma 2.6]
Observe that $w_\lambda(t) - v_\lambda(t)$ has no more than one zero on $(0,\pi)$
if and only if $H(t):=w_\lambda(t)^2 - v_\lambda(t)^2$ has no more than one zero on $(0,\pi)$. Taking a derivative, we see that for $\lambda > 0$ and small enough $t$
\begin{align*}
    H'(t) = \lambda \sin t - \frac{6 t\lambda e^{-3 \lambda t^2/\pi^2}}{\pi^2} > 0.
\end{align*}
Further we see that for $\lambda >0$, $H'(t) = 0$ iff $\lambda = \frac{\pi^2 \log \left( \frac{ 6t}{\pi^2 \sin t} \right)}{3 t^2}$.  Now we claim that the function
\begin{align*}
  (0,\pi) \ni  t \mapsto \lambda(t) = \frac{\pi^2 \log \left( \frac{ 6t}{\pi^2 \sin t} \right)}{3 t^2}
\end{align*}
is strictly increasing and hence one to one from $(0,\pi)$ into $(-\infty,\infty)$ (since $\lim_{t \to 0} \lambda(t)=-\infty$ and $\lim_{t \to \pi} \lambda(t)=+\infty$). Hence for fixed $\lambda$, there exists exactly one $t=t_\lambda$ such that $H'(t) = 0$.  Computing
\begin{align*}
    \lambda'(t) = \frac{ \pi^2}{3t^2} \left( -t \cot(t) + 2 \log \left( \frac{\pi^2 \sin(t)}{6t} \right) + 1 \right)
\end{align*}
and since $\lim_{t \to 0} \lambda'(t) = \infty$, it suffices to show that $\lambda'(t)$ has no zeros on $(0,\pi)$.
That is that
\begin{align*}
  f(t) := 1- t \cot(t) + 2 \log \left( \frac{\pi^2 \sin(t)}{6t} \right)
\end{align*}
has no zeros on $(0,\pi)$.  
Note that $\lim_{t \to 0} f(t) = 2 \log \left( \frac{\pi^2}{6} \right)  > 0$, so it is enough to show that $f$ is increasing on $(0,\pi)$. We have, for $t \in (0,\pi)$
\begin{align*}
f'(t)=1 + \cot (t) + \frac{t}{\sin^2 (t)} - \frac{2}{t} \qquad \mbox{and} \qquad     f''(t) = \frac{2}{t^2} - \frac{2 t \cos(t)}{\sin^3(t)} .
\end{align*}
Now we claim that $f''(t) >0$ on $(0,\pi)$ which is equivalent to saying that
\begin{align*}
    \sin^3(t) - t^3 \cos(t) > 0, \qquad t \in (0,\pi) .
\end{align*}
For $t \geq \pi/2$ this is immediate. For $t < \pi/2$ we use the Taylor series bounds $\sin (t) \geq t - t^3/6$ and $\cos(t) \leq 1 - t^2/2 + t^4/24$, 
\begin{align*}
     \sin^3(t) - t^3 \cos(t) 
        &\geq
            (t - t^3/6)^3 - t^3 ( 1- t^2/2 + t^4/24)
                \\
        &=
            t^7 (3-t)(t+3)/216
\end{align*}
which is clearly positive on $(0,\pi/2)$.  The claim is proved and hence $f'$ is increasing. Since $\lim_{t \to 0} f'(t)=1 >0$, we infer that $f$ is increasing on $(0,\pi)$ as expected.

Thus $H'$ is positive for small $t$ and has at most one zero, and thus it follows that $H$ has at most one $0$ on $(0,\pi)$ since $H(0) = 0$, and $H$ is increasing and then decreasing.
\end{proof}

\begin{proof}[Proof of Lemma 2.9]
The second term follows from $\int_0^z e^{-t^2/2} dt \leq \int_0^\infty e^{-t^2/2} dt = \sqrt{\pi/2}$, which implies
\begin{align*} 
\int_0^z e^{-t^2/2} dt/z \leq \sqrt{\frac{\pi}{2 z^2}} .
\end{align*}
The first term is more complicated.  It is enough to prove that $y \mapsto F(y) \coloneqq \frac{1}{\left(\int_{0}^y e^{-t^2/2}dt\right)^2} - \frac{1}{y^2}$ is non-decreasing on $(0,\infty)$. Indeed, this would imply for any $y >0$ that
$F(y) \geq \lim_{y \downarrow 0} F(y) = \frac{1}{3}$ which can be rephrased as the expected bound
\begin{equation} \label{eq:Paris}
\frac{ \int_0^z e^{-t^2/2} dt}{z} \leq \left( 1 + \frac{z^2}{3} \right)^{-1/2}.
\end{equation}
 
To prove that $F$ is non-decreasing, we take the derivative and obtain that
$$
F'(y) = \frac{2}{y^3 \left( \int_{0}^y e^{-t^2/2}dt\right)^3} 
\left(-y^3 e^{-y^2/2} + \left( \int_{0}^y e^{-t^2/2}dt\right)^3 \right) , \qquad y >0 .
$$
Set $G(y):= -y^3 e^{-y^2/2} + \left( \int_{0}^y e^{-t^2/2}dt\right)^3$, $y>0$, and observe that
$$
G'(y)= e^{-y^2/2} \left( -3y^2 + y^4 +3\left( \int_{0}^y e^{-t^2/2}dt\right)^2 \right) .
$$
Now, since $e^{-t^2/2} \geq 1- \frac{t^2}{2}$, we have
$\int_{0}^y e^{-t^2/2}dt \geq y - \frac{y^3}{6}$. Therefore, for $ y \in (0, \sqrt{6}]$ (so that
$y-\frac{y^3}{6} \geq 0$), it holds
$$
G'(y) 
\geq 
e^{-y^2/2} \left( -3y^2 + y^4 +3\left( y - \frac{y^3}{6}\right)^2 \right)
=
\frac{y^6e^{-y^2/2}}{12} > 0  .
$$
On the other hand, for $y \geq \sqrt{6}$, we observe that $-3y^2 + y^4 >0$ so that $G'(y)>0$ on $[\sqrt{6},\infty)$ and therefore on $(0,\infty)$. As a consequence
$G$ is increasing on $(0,\infty)$, and since $\lim_{y \downarrow 0} G(y) = 0$,
$F$ is non-decreasing on $(0,\infty)$ as expected. The limit as $y$ tends to zero is an easy consequence of the Taylor expansion $\int_0^y e^{-t^2/2} dt = y - y^3/6 + o(y^3)$, while \eqref{eq:Paris} is obtained directly from $F(y) \geq \frac 1 3$.
\end{proof}

\begin{proof}[Proof of Theorem 4.1]
We will prove the two inclusions ($\subset$, $\supset$) of the sets.

Given $f \in \mathcal{P}_C(M)$, if there exists $i \neq j$ such that $f(i), f(j) \in (0,\frac 1 C)$ then $g_1 = f + \varepsilon (\mathds{1}_{\{i\}} - \mathds{1}_{\{j\}})$ and $g_2 = f - \varepsilon (\mathds{1}_{\{i\}} - \mathds{1}_{\{j\}})$ are distinct elements of $\mathcal{P}_C(M)$ (for $\varepsilon$ small enough), and since $\frac{g_1 + g_2}{2} = f$, $f \notin \mathcal{E}(\mathcal{P}_C(M))$.  This proves that extreme points of $\mathcal{P}_C(M)$
have at most one value in $(0,\frac{1}{C})$ and therefore proves the first inclusion.

Conversely, consider $f$ such that there exists $i_o$ with $f(j) \in \{0, 1/C\}$ for all $j \neq i_o$ and suppose that $f = \frac{g_1 + g_2}{2}$ for some $g_1, g_2 \in \mathcal{P}_C(M)$.   For $j \neq i_o$, $f(j)$ is an extreme point of the interval $[0, \frac 1 C]$ and $\frac{g_1(j) + g_2(j)}{2} = f(j)$. Hence $g_1(j) = g_2(j) = f(j)$.  By the constraint that $f, g_1,$ and $g_2$ are probability mass functions we must have $f(i_o) = g_1(i_o) = g_2(i_o)$ as well and the second inclusion is proved.
\end{proof}

\begin{proof}[Proof of Theorem 4.2]
Note that $X_1 + \cdots + X_n$ is supported on $\llbracket nm \rrbracket$, and as a function of densities, 
the map $f \mapsto H_\alpha(f)$ is continuous and quasi-concave in the sense that densities $f$ and $g$ satisfy $H_\alpha(\frac{f+g}{2}) \geq \min \{H_\alpha(f), H_\alpha(g) \}$.  Indeed, continuity is obvious since the probability distributions under consideration have finite support, and  quasi-concavity  follows from the expression $H_\alpha(f) = \alpha' \log \| f\|_\alpha$, and the convexity of $f \mapsto \|f \|_\alpha$ for $\alpha >1$ and its concavity for $\alpha < 1$.  Thus the continuity of the map from $\mathcal{P}_{C_1}(\llbracket  m\rrbracket) \times \cdots \mathcal{P}_{C_n} (\llbracket  m\rrbracket) \to  [0,\infty)$, given by $(f_1, \dots, f_n) \mapsto H_\alpha(f_1 * \cdots *f_n)$ is continuous since the convolution can be expressed as a polynomial of the terms of $f_i(k)$.  What is more, the map is coordinate quasi-concave, since convolution is coordinate affine, in the sense that $\frac{f_1 + g}{2}*f_2* \cdots * f_n = \frac{f_1*f_2*\cdots * f_n}{2} + \frac{g*f_2* \cdots *f_n}{2}$.  Thus the map $g \mapsto H_\alpha(g*f_{X_2} * \cdots * f_{X_n})$ is continuous and quasi-concave on $\mathcal{P}_{C_1}(\llbracket m \rrbracket)$. Since $\mathcal{P}_{C_1}(\llbracket m \rrbracket)$ is a compact, convex subset of $\mathbb{R}^{m+1}$, and since points of compact convex subsets can be written as convex combinations of their extreme points by Krein-Milman, 
there exists a minimizer $g_1 \in \mathcal{E}(\mathcal{P}_{C_1}( \llbracket m \rrbracket )$ such that
\begin{align*}
    H_\alpha(f_{X_1}* f_{X_2}* \cdots * f_{X_n}) \geq H_\alpha( g_1 * f_{X_2} *\cdots * f_{X_n})
\end{align*}
Iterating the argument gives the proof.
\end{proof}

\begin{proof}[Proof of Corollary 4.8]
    Let us first assume that the $X_i$ are all supported on a finite set $\llbracket m \rrbracket$ for some $m$. This implies that $C_i \leq |\llbracket m \rrbracket|=m+1$.
    By Theorem 4.2 there exists densities $g_i \in \mathcal{E}( \mathcal{P}_{C_i}( \llbracket m \rrbracket))$ such that 
    \begin{align*}
        H_\infty( f_{X_1} * \cdots * f_{X_n}) \geq H_\infty ( g_1 * \cdots * g_n).
    \end{align*}
    However, by the assumption that $C_i \in (1,2] \bigcup \cup_{i=3}^{m+1} \{i\}$ the $g_i$ either takes only two values, in which case $g_i^{\#}$ is a Bernoulli, or $g_i$ is a uniform distribution on $C_i$ values, in which case $g_i^{\#}$ is a uniform distribution on $\{0,1, \dots, C_i-1\}$.  In either case $g_i^{\#}$ is log-concave and Theorem 4.7, gives 
    \begin{align*}
        H_\infty( g_1* \cdots * g_n) \geq H_\infty( g_1^{\#} * \cdots * g_n^{\#}).
    \end{align*}
    Combining the two inequalities completes the proof when the $X_i$ have compact support.  The general inequality is an approximation argument.  Define an auxiliary function
    \begin{align*}
        f_i^{(m)}(n) = \begin{cases}
                        \min\{ f_{X_i}(n), \frac 1 {C_i} - \frac 1 m \} &\mbox{if } n \in \llbracket - m , m \rrbracket \\
                        0 &\mbox{else}.
                    \end{cases}.
    \end{align*}
    and from $f_i^{(m)}$ define a density
    \begin{align*}
        f_{\tilde{X}_i}^{(m)}(n) =  f_i^{(m)}(n) + \frac{ 1- \sum_{k = - m}^{m} f_i^{(m)}(k)}{2m+1} \mathbbm{1}_{\llbracket -m, m \rrbracket}(n) .
    \end{align*}
    The probability mass functions $f_{\tilde{X}_i}$ converge pointwise to $f_{X_i}$ with $m \to \infty$, are compactly supported and $\|f_{\tilde{X}_i}^{(m)}(n) \|_\infty \leq \|f_{X_i}\|_\infty \leq 1/C_i$.  Further pointwise convergence coincides with weak-convergence on $\mathbb{Z}$ by the Portmanteau theorem since all subsets of $\mathbb{Z}$ are closed and open. 
    Note that $f \mapsto H_\infty(f)$ is upper semi-continuous with respect to weak convergence since for $f_\beta \to f$ pointwise, $f(n) = \lim_\beta f_\beta(n) \leq \liminf_\beta \|f_\beta \|_\infty$.  Thus it follows that $\limsup_\beta H_\infty( f_\beta) \to H_\infty(f)$.  Taking the max over $n$ gives $\|f\|_\infty \leq \liminf_\beta \|f_\beta\|_\infty$ and hence 
    \begin{align*}
        \limsup_\beta H_\infty(f_\beta) \leq H_\infty(f).
    \end{align*}
    Since the map $(f_1, \dots, f_n) \mapsto f_1*\cdots*f_n$ corresponds to the mapping of a product space of measures $(\mu_1, \dots, \mu)$ to their product measure, a weak continuous operation, and composed with pushing forward $\mu_1 \otimes \cdots \otimes \mu_n$ under the continuous map $T(x) = x_1 + \cdots + x_n$ a weak-continuous mapping, $(f_1, \dots, f_n) \mapsto f_1*\cdots*f_n$ is a composition of weakly continuous functions and hence weakly continuous as well.  Thus, $(f_1, \dots, f_n) \mapsto H_\infty(f_1*\cdots*f_n)$ is upper semi-continuous and we have
    \begin{align*}
        H_\infty (f_{X_1}*\cdots * f_{X_n}) \geq  \limsup_\beta H_\infty( f_{\tilde{X}_1^{(m)}} *\cdots * f_{\tilde{X}_n^{(m)}}).
    \end{align*}
    Since the $ f_{\tilde{X}_1^{(m)}}$ are compactly supported $H_\infty( f_{\tilde{X}_1^{(m)}} *\cdots * f_{\tilde{X}_n^{(m)}}) \geq H_\infty(Z_1 + \cdots + Z_n)$.
\end{proof}

\begin{proof}[Proof of Theorem 
4.12]
Suppose that $f$ is a log-concave density symmetric about a point $n - \frac 1 2$.  Note that if $f$ is supported on only two points, the inequality is true immediately, since by symmetry $f$ is a translation of  Bernoulli with parameter $1/2$.  Thus, assume that $f$ is supported on at least $4$ points so that $\|f\|_\infty < 1/2$ and observe that $\|f\|_\infty = f(n-1) = f(n)$.  Take $p = 1 - 2 \|f\|_\infty > 0$ and define a density $g$ by $g(n+k) = \|f\|_\infty p^k$ for $k \geq 0$ and $g(n+k) = \|f\|_\infty p^{-k-1} $ for $k < 0$.  Note that $g$ is symmetric log-concave density, satisfying $\|g\|_\infty  = \|f\|_\infty$ and $g \prec f$.  Translating $g(k) = g(n+k - 1/2)$, and $f(k) = f(n+k - 1/2)$ we obtain densities symmetric about $0$ taking values on $\frac 1 2 + \mathbb{Z}$.  It is straight forward to prove from the majorization that $X \sim g$ and $Y \sim f$ that $\mathbb{E} X^2 \geq \mathbb{E} Y^2$ and since both variables are centered, $ \Var(X) \geq  \Var(Y)$ while by definition $\Delta_\infty(X) = \Delta_\infty(Y)$.  Thus it suffices to prove the inequality for $X$ and $p \in (0,1)$.

In this case, with integer $k \geq 0$, $\mathbb{P}(X = \pm (\frac 1 2 +k)) = \frac{1-p}{2} p^k$, direct computations gives
\begin{align*}
    \Delta_\infty(X) &= \frac{4}{(1-p)^2} -1 ,
        \\
     \Var(X) &=
        (1-p) \sum_{k=0}^\infty \left( k + \frac 1 2 \right) p^k
            =
            \frac{ p^2+ 6p +1}{4(1-p)^2}.
\end{align*}
Thus the inequality $\Delta_\infty(X) \geq 2 \Var(X)$ is equivalent to proving $5 - 2p -3p^2 \geq 0$
for $p \in [0,1]$.
\end{proof}

\begin{proof}[Proof of Lemma 4.14]
Since both sides of (4.5) are invariant in the transformation of the parameter $p \mapsto 1-p$, we may assume $p \in [1/2,1]$ and compute explicitly with (2.4), it is equivalent to prove
\begin{align*}
    (1-p)^2 + p^2 + 2 p (1-p) \cos(t) \leq e^{- \left( \frac 1 {p^2} - 1\right) t^2/12}.
\end{align*}

Setting $q=1/p \in [1,2]$, the desired inequality is equivalent to proving that
$$
F(q):=q^2 e^{-\frac{q^2-1}{12}t^2} - 2(q-1)\cos(t) - (q-1)^2-1 \geq 0 .
$$
Our first aim is to prove that $F$ is concave on in the interval $[1,2]$ for any given $t \in [0,\pi]$. Observe that
$$
F'(q)= \left( 2q - \frac{q^3t^2}{6} \right) e^{-\frac{q^2-1}{12}t^2} -2 \cos(t) - 2(q-1)  ,
$$
and
$$
F''(q)=\left(2 - 2 e^{\frac{q^2-1}{12}t^2} + \frac{q^2t^2}{6}\left(-5 + \frac{q^2t^2}{6} \right) \right) e^{-\frac{q^2-1}{12}t^2} . 
$$
Given $t \in [0,\pi]$, set $r=\frac{q^2t^2}{6} \in [\frac{t^2}{6}; \frac{2t^2}{3}]$ and
$G(r):=2 - 2 e^\frac{r}{2}e^{-\frac{t^2}{12}} + r(-5+r)$ so that $F$ is concave on $[1,2]$ reduces to proving that $G$ is negative on 
$[\frac{t^2}{6}; \frac{2t^2}{3}]$. Observe that
$$
G'(r)=-e^\frac{r}{2}e^{-\frac{t^2}{12}} + 2r-5
\qquad \mbox{and} \qquad
G''(r)=-\frac{1}{2}e^\frac{r}{2}e^{-\frac{t^2}{12}} + 2 .
$$
We infer that $G''$ is decreasing on $[\frac{t^2}{6}; \frac{2t^2}{3}]$ and may change sign depending on the value of the parameter $t$.
We need to distinguish between two cases. 

(1) Assume first that $t \leq \sqrt{4\log 4}$. Then $G''(2t^2/3)= -\frac{1}{2}e^{\frac{t^2}{4}} + 2 \geq 0$. In that case we conldude that $G'' \geq 0$ on the whole interval and therefore that $G'$ is non-decreasing. Hence $G'(r) \leq G'(2t^2/3)=-e^{\frac{t^2}{4}} + \frac{4}{3}r^2-5$.
It is easy to see that the mapping $[0,\infty) \ni t  \mapsto H(t):=-e^{\frac{t^2}{4}} + \frac{4}{3}r^2-5$ is increasing on $[0, \sqrt{4 \log \frac{16}{3}}]$ so that, for $t \in [0, \sqrt{4\log 4}]$, $H(t) \leq H(\sqrt{4\log 4})=-9 + \frac{16}{3}\log (4) \simeq -1.6 \leq 0$.
Therefore $G'(r) \leq 0$ and hence $G(r) \leq G(t^2/6)=\frac{t^2}{6}(-5+\frac{t^2}{6}) \leq 0$ since $t \in [0,\sqrt{4\log 4}]$. 
As an intermediate  conclusion we proved that $G \leq 0$ in case (1).

(2) Assume now that $t \geq \sqrt{4\log 4}$. Then $G''$ changes sign. Namely, since $G''(t^2/6)=3/2 \geq 0$ and 
$G''(2t^2/3)= -\frac{1}{2}e^{\frac{t^2}{4}} + 2 \leq 0$, $G''$ is non-negative on $[\frac{t^2}{6}, r_o]$ and non-positive on $[r_o,\frac{2t^2}{3}]$, with $r_o:=2(\log 4 + \frac{t^2}{12})$. It follows that, for $t \leq \pi$ and $r \in [\frac{t^2}{6}; \frac{2t^2}{3}]$,
$G'(r) \leq G'(r_o)=4(\log 4 + \frac{t^2}{12})-9 \leq 4\log 4 + \frac{\pi^2}{3} - 9 \simeq -0.16 \leq 0$.
We conclude that $G$ is non-increasing and therefore that $G(r) \leq G(t^2/6)=\frac{t^2}{6}\left(-5+\frac{t^2}{6}\right) \leq 0$
since $t \leq \pi$. As a conclusion we proved that $G \leq 0$ in case (2) and therefore in any case. This shows that $F$ is concave.

Now $F$ being concave, $F \geq 0$ is a consequence of the fact that $F(1)=0$ and 
$F(2)=4e^{-\frac{t^2}{4}}-2\cos (t) - 2 \geq 0$.
To see the latter, one can observe that $2\cos (t) + 2 = 4 \cos^2(t/2)$ so that $F(2) \geq 0$ is equivalent to saying that
$e^{t^2/4} \geq \cos^2(t/2)$ on $[0,\pi]$ which in turn is equivalent to saying that
$e^{u^2/2} \geq \cos (u)$ for any $u=\frac{t}{2} \in [0, \frac{\pi}{2}]$. Taking the logarithm, we end up with proving that
$I(u):=-\frac{u^2}{2}-\log \cos (u) \geq 0$ on $[0, \frac{\pi}{2}]$. Since $I'(u)=-u+ \tan(u) \geq 0$ the desired conclusion immediately follows. This ends the proof of the Lemma.
\end{proof}

%
%
\end{appendix}


\bibliographystyle{plain}
\bibliography{bibibi}

\end{document}